\theoremstyle{definition}
\newtheorem{definition}{Definition}[section]} 
\newtheorem{theorem}[definition]{Theorem}
\newenvironment{theorem*}[1]{{\bf Theorem #1} \begin{itshape}}{\end{itshape}}
\newtheorem{lemma}[definition]{Lemma}
\newtheorem{corollary}[definition]{Corollary}
\newenvironment{corollary*}[1]{{\bf Corollary #1} \begin{itshape}}{\end{itshape}}
\newtheorem{proposition}[definition]{Proposition}
\newenvironment{proposition*}[1]{{\bf Proposition #1} \begin{itshape}}{\end{itshape}}
\theoremstyle{remark}
\newtheorem{remark}[definition]{Remark}
\newcommand{\N}{\mathbb{N}}
\newcommand{\upper}{\mathbb{H}}
\newcommand{\Z}{\mathbb{Z}}
\newcommand{\Q}{\mathbb{Q}}
\newcommand{\R}{\mathbb{R}}
\newcommand{\I}{\mathbb{I}}
\newcommand{\Qs}{\mathbb{Q}_{S}}
\newcommand{\Zs}{\mathbb{Z}_{S}}
\newcommand{\cM}{\mathcal{M}}
\newcommand{\U}{\mathcal{U}}
\newcommand{\Qp}{\mathbb{Q}_{p}}
\newcommand{\Zp}{\mathbb{Z}_{p}}
\newcommand{\Qv}{\mathbb{Q}_{\nu}}
\newcommand{\Zv}{\mathbb{Z}_{\nu}}
\newcommand{\cR}{\mathcal{R}}
\newcommand{\cF}{\mathcal{F}}
\newcommand{\cB}{\mathcal{B}}
\newcommand{\cU}{\mathcal{U}}
\newcommand{\cD}{\mathcal{D}}
\newcommand{\cG}{\mathcal{G}}
\newcommand{\cK}{\mathcal{K}}
\newcommand{\W}{\mathcal{W}}
\newcommand{\Bad}{\mathbf{Bad}}
\newcommand{\bx}{\mathbf{x}}
\newcommand{\by}{\mathbf{y}}
\newcommand{\ba}{\mathbf{a}}
\newcommand{\bb}{\mathbf{b}}
\newcommand{\bp}{\mathbf{p}}
\newcommand{\bd}{\mathbf{d}}
\newcommand{\bi}{\mathbf{i}}
\newcommand{\bt}{\boldsymbol{\tau}}
\newcommand{\bsigma}{\boldsymbol{\sigma}}
\newcommand{\ff}{\mathbf{F}}
\newcommand{\bff}{\mathbf{f}}
\newcommand{\dist}{\operatorname{dist}}
\newcommand{\supp}{\operatorname{supp}}
\newcommand{\diag}{\operatorname{diag}}
\newcommand{\SL}{\operatorname{SL}}
\newcommand{\Int}{\operatorname{Int}}
\newcommand{\Ext}{\operatorname{Ext}}
\newcommand{\DI}{\mathbf{DI}}
\title{Bad is null}
\author{V. Beresnevich}
\address[V. Beresnevich]{Department of Mathematics, University of York, Heslington, York, YO10
5DD, United Kingdom}
\email{victor.beresnevich@york.ac.uk}
\author{S. Datta}
\address[S. Datta]{Department of Mathematics, University of Michigan, Ann Arbor, MI 48109, USA}
\email{shreyasi1992datta@gmail.com}
\author{A. Ghosh}
\address[A. Ghosh]{School of Mathematics, Tata Institute of Fundamental Research, Homi Bhabha Road, Colaba, Mumbai 400005, India}
\email{ghosh.anish@gmail.com}
\author{B. Ward}
\address[B. Ward]{La Trobe University Bendigo Campus, Edwards Rd, Flora Hill, Bendigo, Victoria 3552, Australia}
\email{Ben.Ward@latrobe.edu.au}
\thanks{A.\ G.\ gratefully acknowledges support from a grant from the Infosys foundation, a Department of Science and Technology, Government of India, Swarnajayanti fellowship and a grant from the Department of Atomic Energy, Government of India, under project $12-R\&D-TFR-5.01-0500$.\\
B.\ W.\ acknowledges support from EPSRC Research grant EP/WS22430/1, University of York Winter funding call 2022/23, and Australian Research Council Discovery grant no. 200100994.}
\date{\today}
\begin{document}

\begin{abstract}
In this paper we develop a general framework of badly approximable points in a metric space $X$ equipped with a $\sigma$-finite doubling Borel regular measure $\mu$. We establish that under mild assumptions the $\mu$-measure of the set of badly approximable points is always zero. The framework can be applied to a variety of settings in Diophantine approximation and dynamical systems, which we also consider, including weighted and $S$-arithmetic Diophantine approximations, Diophantine approximation on manifolds and intrinsic approximations on fractals.
\end{abstract}
\setcounter{tocdepth}{1}
\maketitle
\tableofcontents\setcounter{tocdepth}{-1}
\section{Introduction}\label{Intro}

The goal of this paper is to develop a general approach that allows one to prove that, in a variety of settings,
\begin{equation}\label{goal}
\text{the set of badly approximable points is null.}
\end{equation}
Here by null, we mean measure zero with respect to the measure under consideration.
We begin with a brief overview of available tools by revisiting the classical setting of simultaneous Diophantine approximation in $\R^{n}$. Recall that Dirichlet's theorem states that for any point $\bx=(x_{1}, \dots , x_{n}) \in \R^{n}$ and any $N \in \N$ there exists $0<q\leq N$ and $(p_{1}, \dots , p_{n}) \in \Z^{n}$ such that
\begin{equation*}
\max_{1 \leq i \leq n} |qx_{i}-p_{i}|< \frac{1}{N^{1/n}} \, .
\end{equation*}
The notion of badly approximable points is underpinned by a corollary of Dirichlet's theorem, which states that for any point $\bx=(x_1,\dots,x_n)\in\R^n$
\begin{equation*}
\max_{1\le i\le n}|qx_{i}-p_{i}| < \frac{1}{q^{1/n}}
\end{equation*}
holds for infinitely many $(p_1,\dots,p_n,q)\in\Z^n\times\N$. Badly approximable points are simply the irrational points $\bx\in\R^n$ that fail to satisfy this corollary when $1$ is replaced with some arbitrarily small positive constant. Formally, $\bx$ is called badly approximable if there exists $c(\bx)>0$ such that for all $(p_1,\dots,p_n,q)\in\Z^n\times\N$
\begin{equation}\label{eq2}
\max_{1\le i\le n}|qx_{i}-p_{i}| \ge \frac{c(\bx)}{q^{1/n}}\,.
\end{equation}

The set $\Bad(n)$ of badly approximable points is intimately linked to the set $\DI(n)$ of {\em Dirichlet improvable points} for which Dirichlet's Theorem can be improved by some constant $0<c<1$. Namely, $\bx \in \DI(n)$ if and only if there exists $0<c<1$ such that for all sufficiently large $N \in \N$ there exists $0<q\leq N$ and $(p_{1}, \dots , p_{n}) \in \Z^{n}$ such that
\begin{equation*}
\max_{1 \leq i \leq n} |qx_{i}-p_{i}|< \frac{c}{N^{1/n}} \, .
\end{equation*}
In dimension one we have that $\DI(1)=\Bad(1) \cup \Q$, and in higher dimensions we have that $\Bad(n) \subset \DI(n)$ due to a result of Davenport and Schmidt \cite{MR0272722}, see \cite{MR4395950} for an overview.

The set $\Bad(n)$ has full Hausdorff dimension in $\R^n$ as was shown by Jarn\'ik \cite{J28} for $n=1$ and later by Schmidt \cite{Schmidt66,Schmidt80} for any $n$. At the same time $\Bad(n)$ is small in the sense that its Lebesgue measure is zero. In other words $\Bad(n)$ is null. The latter is very well known and we recall the following 4 different ways in which this can be obtained:

\begin{itemize}
\item[\text{(A)}]
The fact that $\Bad(n)$ is null follows from the aforementioned inclusion $\Bad(n) \subset \DI(n)$ established in \cite[Theorem~2]{MR0272722} by Davenport and Schmidt together with the fact that the set $\DI(n)$ is null, also proven by Davenport and Schmidt in a separate publication \cite[Theorem~1]{MR279040}.

\medskip

\item[\text{(B)}]
The fact that $\Bad(n)$ is null also follows from the divergence part of Khintchine's theorem \cite{K26}, which states that for any positive monotonic function $\psi:\N\to\R$ such that
\begin{equation}\label{divsum}
\sum_{q=1}^\infty\psi(q)^n=\infty
\end{equation}
we have that for almost every $\bx\in\R^n$
\begin{equation}\label{K3}
\max_{1\le i\le n}|qx_{i}-p_{i}| < \psi(q)
\end{equation}
holds for infinitely many $(p_1,\dots,p_n,q)\in\Z^n\times\N$. Indeed, let $\psi(q)={(q\log q)^{-1/n}}$. Clearly $\psi$ is monotonic and satisfies \eqref{divsum}. Further, any point $\bx$ satisfying \eqref{K3} infinitely often with this $\psi$ is not badly approximable. By Khintchine's theorem, the set of such points $\bx$ is of full measure and so its complement, which contains $\Bad(n)$, is null.

\medskip

\item[\text{(C)}]  The fact that $\Bad(n)$ is null follows from the ergodicity of the action of the one-parameter subgroup of $\SL_{n+1}(\R)$ consisting of diagonal matrices
    $$
    g_t=\diag(e^{nt},e^{-t},\dots,e^{-t})
    $$
    on the homogeneous space $\SL_{n+1}(\R)/\SL_{n+1}(\Z)$ together with the Dani correspondence, see \cite{Dani85} for details. This will be explicated in \S \ref{ergodic_methods} below in greater detail.

\medskip

\item[\text{(D)}] Also the fact that $\Bad(n)$ is null follows from the Scaling Lemma \cite[Lemma~4]{BV08}, which simply tells us that the set of points $\bx\in\R^n$ such that the system of inequalities
\begin{equation*}
\max_{1\le i\le n}|qx_{i}-p_{i}| < \frac{c}{q^{1/n}}
\end{equation*}
holds for infinitely many $(p_1,\dots,p_n,q)\in\Z^n\times\N$, has the same Lebesgue measure for all $c>0$.
\end{itemize}

\begin{remark}
As a consequence of Mahler's version of Khintchine's Transference principle \cite{Mahler39} we have that a point $\bx \in \Bad(n)$ if and only if $\bx$ is \textit{dually badly approximable}, that is
\begin{equation*}
\bx \in \Bad^{*}(n):= \left\{\bx \in \R^{n} : \begin{array}{l}
\exists\;\, c(\bx)>0 \, \text{ such that } \, |q_{1}x_{1}+ \dots + q_{n}x_{n} + p| \geq \dfrac{c(\bx)}{(\max|q_{i}|)^{n}} \\[1ex]
\text{ for all } {\bf0}\neq(q_{1}, \dots , q_{n},p) \in \Z^{n+1} \end{array}\right\}.
\end{equation*}
Thus in each of the above approaches (A)--(D) if the dual theory shows that $\Bad^{*}(n)$ is null then we can deduce that $\Bad(n)$ is also null. For example, in the dual equivalent of (B) we have, by the Khintchine-Groshev theorem (see for example \cite{BV10}), that for almost every $\bx \in \R^{n}$
\begin{equation*}
|q_{1}x_{1}+\dots + q_{n}x_{n}+p|< \frac{1}{(\max|q_{i}|)^{n}\log (\max|q_{i}|)}
\end{equation*}
  holds for infinitely many $(q_{1},\dots , q_{n},p)\in \Z^{n+1}$. Clearly any point $\bx$ satisfying the above condition is not dually badly approximable and so by Mahler's Transference principle, $\Bad(n)$ is a null set.
\end{remark}

Developing a general approach based on (A), (B) or (C) type arguments seems to have more barriers than doing so with a type (D) argument. Indeed, (A) and (B) require establishing statements that are much stronger that simply proving that Bad is null, while (C) requires an underlying Lie group structure. Within this paper we develop a general approach to proving \eqref{goal} based on a type (D) argument. First, we establish a general framework of badly approximable points in metric spaces in \S\ref{GS}, and then we consider several applications.

\section{A spoiler of applications}

In this small section we give a selection of applications of our general framework. However, the reader is encouraged to look at the general framework in \S \ref{GS} and \S \ref{gen2} and more applications in sections \S \ref{applications}, \ref{App2}, and \ref{S_bad}. In this paper we will consider badly approximable points in the more general setting of weighted Diophantine approximation, which we discuss in \S \ref{weighted} in detail. In particular, we refer to \S \ref{weighted} for definition of $\Bad(\bt)$ -- the set of $\bt$-badly approximable points in $\R^n$, where $\bt=(\tau_1,\dots,\tau_n)$ is the vector of weights. In fact, one of the main themes of this paper is to show the set of weighted badly approximable vectors has measure zero with respect to relevant measures. First, we state a slightly simplified version of one of our main applications of the general framework - Theorem~\ref{bad_realmani} below - regarding weighted badly approximable points on manifolds. In what follows $\R_+$ is the set of positive reals.

\begin{theorem}\label{main1}
Let $\bt=(\tau_{1}, \dots, \tau_{n}) \in \R^{n}_{+}$ satisfy $\tau_1+\dots+\tau_n=1$ and $\cM$ be a $C^2$ submanifold of $\R^n$. Furthermore, in the case the weights $\tau_1,\dots,\tau_n$ are not all the same, assume that almost everywhere the tangent plane to $\cM$ is not orthogonal to any of the coordinate axis. Then almost every point on $\cM$ is not in $\Bad(\bt)$.
\end{theorem}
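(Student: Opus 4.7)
The plan is to rephrase the theorem as a full-measure statement for a lim sup set on $\cM$ and then verify the hypotheses of the general framework of \S\ref{GS}, which is the Scaling-Lemma-style (type (D)) approach. For $c>0$, set
$$
W_c(\bt):=\Big\{\bx\in\cM:\ \max_{1\le i\le n}|qx_i-p_i|^{1/\tau_i}<c/q\ \text{for infinitely many}\ (\bp,q)\in\Z^n\times\N\Big\}.
$$
A point of $\cM$ is $\bt$-badly approximable precisely when it lies outside $W_c(\bt)$ for some $c>0$, so $\cM\cap\Bad(\bt)=\bigcup_{k\in\N}\big(\cM\setminus W_{1/k}(\bt)\big)$, and it suffices to show $\mu_\cM(\cM\setminus W_c(\bt))=0$ for every fixed $c>0$, where $\mu_\cM$ is the induced $d$-dimensional (Hausdorff) measure on $\cM$. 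Since the weighted Dirichlet theorem applied pointwise on $\R^n\supset\cM$ ensures $W_{c_0}(\bt)=\cM$ for an absolute $c_0$ (any $c_0\ge 1$), the whole task reduces to proving that $\mu_\cM(W_c(\bt))$ is independent of $c$.

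To extract this scale invariance from the abstract framework, I would first dispatch the measure-theoretic prerequisites: on any $C^2$ submanifold $\mu_\cM$ is $\sigma$-finite, Borel regular and Ahlfors $d$-regular, hence doubling. The substantive input is the geometric estimate demanded by the framework, namely a uniform two-sided bound for $\mu_\cM(\cM\cap R_c(\bp,q))$, where
$$
R_c(\bp,q):=\prod_{i=1}^n\left[\tfrac{p_i}{q}-c^{\tau_i}q^{-1-\tau_i},\ \tfrac{p_i}{q}+c^{\tau_i}q^{-1-\tau_i}\right]
$$
is the weighted rectangle centred at $\bp/q$. Working locally via a $C^2$ parametrization $\bff:U\to\cM\subset\R^n$ and using the assumption that $T_\bx\cM$ is almost everywhere not orthogonal to any coordinate axis, the implicit function theorem lets me express $n-d$ of the coordinate functions on $\cM$ as $C^2$ graphs of the remaining $d$; the non-orthogonality ensures that no component function $f_i$ has vanishing tangential gradient, so the anisotropic rectangle meets $\cM$ in a set whose $\mu_\cM$-measure scales with the correct powers of $c$ in each coordinate direction. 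Fed into the framework's rescaling / Borel--Cantelli machinery, this estimate produces $\mu_\cM(W_c(\bt))=\mu_\cM(W_{c'}(\bt))$ for all $c,c'>0$ and completes the proof.

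The principal obstacle is the uniform intersection estimate in the presence of unequal weights. In the symmetric case $\tau_1=\dots=\tau_n=1/n$ the rectangles are Euclidean cubes and the trace of $\mu_\cM$ on them is controlled without any transversality hypothesis. When the $\tau_i$ are unequal the rectangles are severely elongated: if $T_\bx\cM$ were orthogonal to some coordinate axis, say the $i$-th, then $\cM$ would be ``flat'' in the $i$-th direction on a scale far smaller than the rectangle's $c^{\tau_i}q^{-1-\tau_i}$ extent, and $\cM\cap R_c(\bp,q)$ could fail to be $d$-dimensional, breaking the scaling. The non-orthogonality hypothesis is exactly what rules this out, and matching the resulting intersection bounds to the abstract hypotheses of \S\ref{GS} -- uniformly in $(\bp,q)$ and valid off an exceptional null subset of $\cM$ -- is where the bulk of the technical work is expected to lie.
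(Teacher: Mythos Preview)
Your overall reduction to constant-invariance of a limsup set is right, but the way you propose to verify the framework's hypotheses does not go through, and the paper takes a different route. You want to apply Theorem~\ref{bad_0_general} (or Lemma~\ref{CI_product}) with ambient measure $\mu_\cM$ and resonant points $\bp/q$. But condition~(i) requires $R_\alpha\subset\supp\mu$, and generic rationals are \emph{not} on $\cM$; condition~(ii) forces $\phi_1=\dots=\phi_n$, which fails precisely in the weighted case you care about. The ``uniform intersection estimate'' for $\mu_\cM(\cM\cap R_c(\bp,q))$ is not one of the framework's hypotheses, and in any case cannot hold uniformly over $\bp/q$: for most rationals the intersection is empty, and when nonempty its measure depends on the distance from $\bp/q$ to $\cM$ in a way that does not scale simply with $c$. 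So the scaling argument on $(\cM,\mu_\cM)$ as stated is stuck.

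The paper never works with $\mu_\cM$. It pulls back to the parameter domain $\U\subset\R^d$ via a Monge parametrisation $\ff(\bx)=(\bx,\bff(\bx))$ and uses Lebesgue measure on $\R^d$, where the rationals $(b_1/b_0,\dots,b_d/b_0)$ \emph{do} lie in the support, so Lemma~\ref{CI_product} applies in the $d$ independent directions. The $m$ dependent directions are not handled by scaling at all: they are absorbed into the index set by restricting to those $\tilde\bb$ for which $|f_j(b_1/b_0,\dots,b_d/b_0)-b_{d+j}/b_0|$ is already small (the set $S_\bt(\delta_{d+1},\dots,\delta_n)$ of \eqref{eqn047}), and a Dirichlet-type theorem on the tangent plane (\S\ref{proofR}) shows this index set is rich enough for the limsup to have full $\mu_d$-measure. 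This is also where the orthogonality hypothesis actually enters, and not as you describe: it is used (Remark~\ref{rem8.2}) to choose the parametrisation so that the $d$ independent coordinates carry the $d$ largest weights, which is exactly condition~\eqref{maxlessmin} and is what makes the $C^2$ remainder $C\max_i|x_i-b_i/b_0|^2$ dominated by the target $|b_0|^{-1-\tau_{d+j}}$ in \eqref{f_R}.
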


\begin{remark}
The condition on the tangent plane is mild and can be relaxed, see Theorem~\ref{bad_realmani}. In fact, Theorem~\ref{main1} is an obvious consequence of 
Theorem~\ref{bad_realmani}, and how to deduce one from the other is explained in Remark~\ref{rem8.2}.
\end{remark}

\begin{remark}
As discussed in \S \ref{motivation_manifold_fractal} below, most of the earlier results on badly approximable points on manifolds had the assumption that the manifold is \textit{nondegenerate}. Theorem~\ref{main1} includes the degenerate case, in particular it covers affine subspaces.
\end{remark}

More generally, we consider manifolds in the $S$-arithmetic setting. When $S=\{p\}$, we have a complete analogue of Theorem \ref{main1}; 
see Theorem \ref{bad_manifold_p-adic}. In fact, the proofs are significantly more complex for $S$-arithmetic manifolds, i.e., when $S$ contains more than one place; see \S\ref{proofbad}. Accordingly, we have more restrictive assumptions, where we need conditions on the Diophantine exponent of the manifold; see Theorem \ref{Bad_nonextremal} and Corollary \ref{bad_manifold_S-adic}.

As another application of our general framework, we consider intrinsic Diophantine approximation on fractals; see \S \ref{App2}. Let $\cK=\prod_{j=1}^{n}\cK_{j}\subset \R^n$ be the attractor of an IFS, as defined in \S \ref{App2}, and $\dim\cK_j=\gamma_j.$ Let $\Bad_{\Int}(\cK;\bt)$ be the set of weighted intrinsically badly approximable points in $\cK$ as in \S \ref{App2}:
$$ \Bad_{\Int}(\cK;\bt)= \left\{ \bx =(x_i)_{i=1}^n\in \cK : \exists \, c(\bx)>0 \quad \left|x_{i}-\frac{p_{i}}{q}\right|\geq \frac{c(\bx)}{q(\log q)^{\tau_i}} \quad \forall ~\frac{\bp}{q} \in \Q^{n}\cap \cK \right\}.$$
We establish the following
\begin{theorem}
    Let $\cK$ be as above. Suppose that $\bt=(\tau_{1}, \dots , \tau_{n}) \in \R^{n}_{+}$ and
$\sum_{i=1}^{n}\tau_{i}\gamma_{i}=1$. Then $$\mu(\Bad_{\Int}(\cK;\bt))=0, $$ where $\mu$ is  the canonical self similar measure of $\cK$ as defined in Equation \eqref{mu}.
\end{theorem}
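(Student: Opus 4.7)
The plan is to apply the general framework developed in Section~\ref{GS}, which, under mild hypotheses on a doubling Borel regular measure and a family of resonant targets, reduces the nullity of the set of badly approximable points to a type~(D) scaling argument. In our setting the resonant points are the rationals $\bp/q\in\cK\cap\Q^n$ and the associated inflatable targets are the weighted boxes
\begin{equation*}
B(\bp/q,c)\;=\;\prod_{i=1}^{n}\left[\frac{p_i}{q}-\frac{c}{q(\log q)^{\tau_i}},\;\frac{p_i}{q}+\frac{c}{q(\log q)^{\tau_i}}\right], \qquad c>0.
\end{equation*}
Unwinding the definition of $\Bad_{\Int}(\cK;\bt)$, a point $\bx\in\cK$ lies in this set precisely when for some $c>0$ no box $B(\bp/q,c)$ contains $\bx$. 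Proving the theorem thus reduces to showing that for every $c>0$ the $\mu$-measure of $\cK\setminus\bigcup_{\bp/q}\bigl(B(\bp/q,c)\cap\cK\bigr)$ is zero.

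The first, essentially routine, step is to verify the measure hypotheses of the framework. Under the open set condition (implicit in the IFS setup), each canonical self-similar measure $\mu_j$ on $\cK_j$ is Ahlfors $\gamma_j$-regular, so the product $\mu=\mu_1\otimes\cdots\otimes\mu_n$ is $\sigma$-finite, Borel regular and doubling on $\cK$. Coordinate-wise Ahlfors regularity then yields
\begin{equation*}
\mu\bigl(B(\bp/q,c)\cap\cK\bigr)\;\asymp\;\prod_{i=1}^{n}\!\left(\frac{c}{q(\log q)^{\tau_i}}\right)^{\!\gamma_i}\;=\;\frac{c^{\sum_i\gamma_i}}{q^{\sum_i\gamma_i}\,(\log q)^{\sum_i\tau_i\gamma_i}}.
\end{equation*}
The hypothesis $\sum_i\tau_i\gamma_i=1$ makes the logarithmic exponent equal to $1$, placing the problem precisely at the critical, borderline-divergent threshold (compare the choice $\psi(q)=(q\log q)^{-1/n}$ underlying approach~(B) in the introduction).

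The main technical obstacle is the remaining hypothesis of the framework, namely a local counting or ubiquity statement for rationals on $\cK$. Concretely, for each ball in $\cK$ of radius $R$ and each scale $Q$ one needs $\asymp(RQ)^{\sum_i\gamma_i}$ rationals $\bp/q\in\cK\cap\Q^n$ with $q\leq Q$ lying in that ball, sufficiently well spread, so that the aggregated masses $\sum_{\bp/q}\mu\bigl(B(\bp/q,c)\cap\cK\bigr)$ are bounded below by a positive constant depending only on $c$ and $R$. Such density of rationals on a self-similar set is not a formal consequence of $\cK$ being an IFS attractor; it must be extracted from the arithmetic structure of the IFS, typically from periodic orbits of compositions when the contractions have rational data. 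Once this ubiquity is in place, the scaling mechanism packaged in Section~\ref{GS} takes over and delivers $\mu\bigl(\Bad_{\Int}(\cK;\bt)\bigr)=0$.
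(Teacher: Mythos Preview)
Your proposal misreads what the framework of \S\ref{GS} actually demands. Theorem~\ref{bad_0_general}(i) requires only that $\mu$ is doubling and that the resonant sets $R_\alpha$ lie in $\supp\mu$; there is \emph{no} ubiquity or counting hypothesis. Once you verify these two conditions (which you do correctly), the framework gives $\mu\bigl(\W_\cR(\Phi)\setminus\bigcap_k\W_\cR(\tfrac1k\Phi)\bigr)=0$ for \emph{every} $\Phi$, with no reference to the critical exponent $\sum_i\tau_i\gamma_i=1$ at all. This is precisely Theorem~\ref{thm4.7} in the paper. So the ``main technical obstacle'' you describe is not an obstacle: the scaling lemma is free.

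What is genuinely missing from your outline is the other half of the argument. Since $\Bad_{\Int}(\cK;\bt)=\cK\setminus\bigcap_k\W_{\Int}(\cK;\tfrac1k\Psi)$ rather than $\W_{\Int}(\cK;\Psi)\setminus\bigcap_k\W_{\Int}(\cK;\tfrac1k\Psi)$, you must also show that $\W_{\Int}(\cK;C\Psi)$ has full $\mu$-measure for some $C$. The paper does this via a weighted Dirichlet-type theorem on $\cK$ (Theorem~\ref{fractal_dirichlet}), proved by a pigeonhole argument on the shifted points $\bff_{{}^{(d)}\bsigma}(\bx)$ and the periodic points of the IFS. This is where the condition $\sum_i\tau_i\gamma_i=1$ actually enters, and it is far weaker than ubiquity: one needs only a single good rational approximant for each $\bx$ and each height, not a well-spread family with the correct count. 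Your Ahlfors-regularity mass computation, while correct, plays no role in either step.
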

Our main abstract theorems of the general framework are Theorem \ref{bad_0_general}, and more generally Theorem \ref{bad_0} which we will not attempt to state here.
Lastly, we want to mention an application of our method in classical set-up, which one can find in \S \ref{applications}. Here we consider weighted approximations for $n\times m$ matrices in $\Q_S^{mn}$, that is in the $S$-arithmetic setting.  In this direction, we have Theorem \ref{thm6.5}. The reader is encouraged to see the classical case first to understand the complications that arise in the manifolds case in \S \ref{proofbad}, more specifically Theorem \ref{diri}.

\begin{remark}
Potential applications of the general framework are not limited to those presented in this paper and may include a variety of  problems in number theory and dynamical systems, for example, to investigate badly approximable points arising from the so-called shrinking target problem. In particular, in the follow-up paper \cite{BDGW2} we use the methods presented in this paper to study the shrinking target property for irrational rotations of tori and more generally for $\Z^d$ actions on tori \cite{BHKV}. Specifically, we generalise results of Kim \cite{MR2335077} and Shapira \cite{Shapira} by establishing the corresponding shrinking target property for rectangular targets (defined by weights) and considering the much wider $S$-arithmetic setting. 
\end{remark}

\section{Background}

\subsection{Ergodicity, Duality and Bad}\label{ergodic_methods}
We begin with a brief overview of results in various generality that have been proved using a type (C) argument, see \S\ref{Intro}.
The fact that Bad is null in a given arithmetic context is often a manifestation of the ergodicity of a related group action.
We illustrate this principle using some examples.

\subsubsection{Diagonal actions on the space of unimodular lattices.} Let $G = \SL_{n+1}(\R)$ and $\Gamma = \SL_{n+1}(\Z)$. Let $\mathcal{L}_{n+1}$ denote the space of unimodular lattices in $\R^{n+1}$; this space can be identified with the homogeneous space $\SL_{n+1}(\R)/\SL_{n+1}(\Z)$ which has an invariant probability measure inherited from the Haar measure on $\SL_{n+1}(\R)$. The quotient is non-compact and its compact subsets are described by Mahler's compactness criterion. For $\bx\in\R^n$ define the lattice
$$
\Lambda_{\bx}:=\{(q\bx-\bp,q):\, q\in\Z, \bp\in\Z^n\} = \begin{pmatrix}Id & \bx\\ 0 & 1 \end{pmatrix}\Z^{n+1}\in \mathcal{L}_{n+1}.
$$
Consider the action on $\mathcal{L}_{n+1}$ by the one-parameter subgroup
$$g_t := \diag(e^{-t}, e^{t/n},\dots, e^{t/n})
$$
of $\SL_{n+1}(\R)$. In \cite{Dani85}, Dani proved the following fact which nowadays is often referred to as {\em Dani's correspondence}.

\begin{proposition}[Dani \cite{Dani85}]\label{badly}
The vector $\bx \in \R^n$ is badly approximable if and only if the semi-orbit $\{g_t \Lambda_\bx$, $t>0\}$,  is bounded in
$\mathcal{L}_{n+1}$.
\end{proposition}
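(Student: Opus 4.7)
The plan is to apply Mahler's compactness criterion, which characterises relatively compact subsets of $\mathcal{L}_{n+1}$ by the existence of a uniform positive lower bound on the length of the shortest nonzero lattice vector. Thus boundedness of $\{g_t\Lambda_\bx : t>0\}$ in $\mathcal{L}_{n+1}$ is equivalent to the existence of $\varepsilon>0$ such that $\|g_t\bv\|_\infty \geq \varepsilon$ for every $t>0$ and every nonzero $\bv \in \Lambda_\bx$. The strategy is then to parametrise nonzero vectors of $\Lambda_\bx$ as $(q\bx-\bp,q)$ with $(\bp,q)\in\Z^n\times\Z$, optimise over $t$ for each such vector, and translate the resulting lower bound into the Diophantine inequality defining $\Bad(n)$.

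First I would dispose of the case $q=0$: any such nonzero lattice vector is $(-\bp,0)$ with $\bp \in \Z^n\setminus\{0\}$, and lies in the subspace on which $g_t$ acts by $e^{t/n}$, so $\|g_t(-\bp,0)\|_\infty \geq \|\bp\|_\infty \geq 1$ for all $t\geq 0$. For $q\geq 1$ (the case $q<0$ is symmetric), write $M(\bp,q) := \max_{1\leq i\leq n}|qx_i-p_i|$. Aligning coordinates so that the $e^{-t}$ factor of $g_t$ acts on the $q$-entry and each $e^{t/n}$ on an entry $qx_i - p_i$, one computes
\[
\|g_t(q\bx-\bp,q)\|_\infty = \max\bigl(e^{t/n}M(\bp,q),\; e^{-t}q\bigr).
\]
Minimising the right-hand side in $t\geq 0$ by equating the two arguments of the max yields
\[
\inf_{t\geq 0}\|g_t(q\bx-\bp,q)\|_\infty = M(\bp,q)^{n/(n+1)}\,q^{1/(n+1)} = \bigl(M(\bp,q)\,q^{1/n}\bigr)^{n/(n+1)}.
\]

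For the forward direction, if $\bx\in\Bad(n)$ then $M(\bp,q)\,q^{1/n}\geq c(\bx)$ for all $(\bp,q)\in\Z^n\times\N$, so the infimum above is bounded below by $c(\bx)^{n/(n+1)}$ uniformly in $(\bp,q)$. Combined with the $q=0$ case, this gives a uniform lower bound on $\|g_t\bv\|_\infty$ over all $t>0$ and all nonzero $\bv\in\Lambda_\bx$, and Mahler's criterion delivers boundedness of the semi-orbit. Conversely, if $\bx\notin\Bad(n)$ then there exists a sequence $(\bp_k,q_k)\in\Z^n\times\N$ with $M(\bp_k,q_k)\,q_k^{1/n}\to 0$; taking $t_k$ to be the corresponding minimiser above produces lattice vectors $g_{t_k}(q_k\bx-\bp_k,q_k)$ of $\|\cdot\|_\infty$-norm tending to $0$, so $g_{t_k}\Lambda_\bx$ leaves every compact subset of $\mathcal{L}_{n+1}$, by Mahler again.

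The only point requiring care is the coordinate bookkeeping between $g_t$ and the ambient ordering of $\Lambda_\bx$; once the shrinking coordinate is matched with $q$, the whole argument reduces to the elementary one-variable optimisation above together with the identification of $M(\bp,q)\,q^{1/n}$ as the Diophantine quantity governing both the minimal lattice-vector length along the orbit and the badness of $\bx$. No further input beyond Mahler's compactness criterion is needed.
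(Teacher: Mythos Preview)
The paper does not supply a proof of this proposition; it is quoted from Dani~\cite{Dani85} without argument. Your proof via Mahler's compactness criterion is the standard one and is correct.

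Two minor points of care, neither of which invalidates the argument. First, the displayed identity
\[
\inf_{t\ge0}\|g_t(q\bx-\bp,q)\|_\infty=(M(\bp,q)\,q^{1/n})^{n/(n+1)}
\]
holds exactly only when the equating value of $t$ is nonnegative, i.e.\ when $M(\bp,q)\le q$; otherwise the infimum over $t\ge0$ is $M(\bp,q)\ge q\ge1$, which is already a harmless uniform lower bound. Second, in the converse direction you should observe that $M_kq_k^{1/n}\to0$ forces $q_k/M_k\to\infty$, so the minimiser $t_k$ is eventually positive and hence lies in the semi-orbit $t>0$. With these remarks your proof is complete.
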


In fact, Dani proved the above result for systems of linear forms, and also provided a dynamical criterion for a vector to be singular. By Moore's ergodicity criterion, the action of $g_t$ on $\mathcal{L}_{n+1}$ is ergodic and therefore the set of points whose $g_t$ orbits are bounded has zero Haar measure. One can now use the fact that $\left\{\begin{pmatrix}Id & \bx\\ 0 & 1 \end{pmatrix}~:~\bx \in \R^n\right\}$ is the \emph{expanding horospherical} subgroup of $\{g_t\}$ to conclude that this in fact, implies that the set of $\Lambda_\bx$ has zero measure. By Dani's correspondence,  this means that $\Bad(n)$ is null. Dani's approach has proved to be extremely influential and leads itself to many generalisations. For instance, in \cite{EGL16} (see also \cite{AGGL19}), the set $\Bad_k$ of badly approximable vectors with respect to rationals coming from a fixed number field $k$ were studied. Here too, a straightforward adaptation of Dani's argument shows that $\Bad_k$ has zero measure.

\subsubsection{Group actions on homogeneous varieties.} In \cite{GGN13, GGN14, GGN15, GGN18}, a theory of intrinsic Diophantine approximation for homogeneous varieties of semisimple groups has been developed. The approach of Ghosh, Gorodnik and Nevo also uses a correspondence between dynamics of group actions and Diophantine approximation, but here the acting group is a larger, semisimple subgroup. We briefly describe this approach in the simple case of group varieties. Let $G \subset \mathbb{C}^n$ be a simple, simply connected algebraic group defined over $\Q$. Then Diophantine approximation on $G(\R)$ by points in $G(\Q)$ can be studied using the dynamics of the action of the group of finite adeles $G(\mathbb{A}_f)$ on the finite volume homogeneous space $G(\mathbb{A})/G(\mathbb{Q})$ via a duality principle similar, in spirit, to Dani's correspondence. To every point $x \in G(\R)$, one can similarly attach a point $(x, e) \in G(\mathbb{A})$. Then, it can be shown that $x$ is badly approximable if and only if the orbit $(x, e)G(\mathbb{A}_f)$ does not intersect a fixed neighbourhood of the identity coset in $G(\mathbb{A})/G(\mathbb{Q})$. By the ergodicity of the $G(\mathbb{A}_f)$-action, we can once more conclude that the set of badly approximable points has zero measure. A similar statement holds with straightforward modifications, for more general varieties of the form $G/H$.
For quadratic surfaces, a dynamical approach closer to Dani's original one can be applied \cite{MR4397037}, see also \cite{FKMS18}. This is discussed in \S 3.4.

One impetus for the work on homogeneous varieties of semisimple groups came from an old question of Lang regarding Diophantine approximation on abelian varieties. Indeed, there have been studies of metric Diophantine approximation in the projective setting \cite{GhoshHaynes16} including badly approximable points \cite{HH17}.

\subsubsection{Actions of isometries on the Gromov boundary of a hyperbolic space.}
Dani's correspondence can be viewed as a generalisation of the following fact: viewing $\R$ as the boundary of the upper half plane $\upper$, a number $x$ is badly approximable if and only if it is the end point of a geodesic $\gamma$ whose image on the modular surface $\upper/\SL_{2}(\Z)$ is bounded. The dual point of view involves the action of the discrete group of isometries (in this case $\SL_{2}(\Z)$) on its limit set in the boundary of hyperbolic space (in this case, the limit set is $\R \cup \infty$). This study was taken much further by Patterson \cite{Pat76a, Pat76b} who initiated the systematic study of metric Diophantine approximation on limit sets by isometry groups. There has been extensive, very general work in this direction, we refer the reader to \cite{BGSV18, FSU} for recent work on badly approximable points in this context. Here too, the fact that Bad is null follows from the ergodicity of the geodesic flow on the associated hyperbolic manifold. This problem is related to that of Diophantine approximation on fractals as discussed in \S \ref{motivation_manifold_fractal}.

\begin{remark}
In the approach discussed above, it is essential that the measure in question be ergodic and invariant under the corresponding group action. This fails or is very difficult to check for the main examples treated in this paper, namely for Diophantine approximation on manifolds and fractals. Our approach in the present paper, namely a general version of approach (D), is flexible enough to deal with these examples.
\end{remark}

\subsection{Weighted Diophantine approximation in $\R^n$}\label{weighted}

In this setting one allows different rates of approximation in each coordinate direction, which are typically defined by a weights vector, say $\bt=(\tau_{1}, \dots , \tau_{n})\in \R^{n}_{\ge0}$, satisfying $\tau_{1}+\dots+\tau_{n}=1$. The point $\bx\in\R^n$ is called $\bt$-badly approximable if there exists a constant $c(\bx,\bt)>0$ such that for all $(p_1,\dots,p_n, q) \in \Z^{n}\times\N$
\begin{equation*}
|qx_{i}-p_{i}| \geq \frac{c(\bx,\bt)}{q^{\tau_{i}}} \qquad \text{for some }1 \leq i \leq n\,.
\end{equation*}
Clearly, $\Bad(n)=\Bad(\tfrac1n,\dots,\tfrac1n)$. The sets $\Bad(\bt)$ of $\bt$-badly approximable points have attracted major interest in recent years in relation to Schmidt's conjecture on the intersections of the sets $\Bad(\bt)$ in $\R^2$ which was solved in \cite{BPV11}. Subsequent breathtaking progress led to showing that any countable intersections of the sets $\Bad(\bt)$ in $\R^n$ have full Hausdorff dimension for any $n$. Furthermore, it was shown that the restriction of such intersections to non-degenerate submanifolds of $\R^n$, and to certain fractals that support Ahlfors regular measures have full Hausdorff dimension. These developments remain a hot topic area and we refer the reader to \cite[\S 7.1]{BRV16}, \cite{A13,A16,BPV11,D62,KW10,PV02} and references within for the state of the art and open problems.

Naturally, investigating the dimension of the sets $\Bad(\bt)$ or their restrictions to manifolds and fractals is only of interest when we know that these sets (resp. their restrictions) are null, that is of Lebesgue measure zero in $\R^n$ (resp. zero measure on the manifold or fractal). Otherwise, the dimension is trivially full. The fact that $\Bad(\bt)$ is null is also well known as in the unweighted case of $\Bad(n)$. In fact, this fact can also be obtained in the same four ways as in the case of $\Bad(n)$:

\begin{itemize}
  \item[\text{(A)}] Kleinbock and Weiss proved that the set of \textit{weighted Dirichlet improvable} numbers is null \cite[Theorem~1.4]{MR2366229} and then Kleinbock and Rao proved that the set $\Bad(\bt)$ is contained in the set of weighted Dirichlet improvable numbers \cite[Theorem~1.2]{KleinbockRao}. Combining these two findings proves that $\Bad(\bt)$ is null.
  \item[\text{(B)}] Gallagher's generalisation \cite{G62} of Khintchine's theorem implies that $\Bad(\bt)$ is null in the same way as Khintchine's theorem implied that $\Bad(n)$ is null.
  \item[\text{(C)}] Using the ergodicity of the action of the diagonal matrices $g_t=\diag(e^{t},e^{-\tau_1t},\dots,e^{-\tau_nt})$ on $\SL_{n+1}(\R)/\SL_{n+1}(\Z)$ and a generalisation of the argument of Dani \cite{Dani85} as explained earlier implies that $\Bad(\bt)$ is null;
      \item[\text{(D)}] using the Scaling Lemma of \cite[Lemma~4]{BV08} in combination with the Cross Fibering Principle of \cite[Theorem~3]{BHV13} proves that $\Bad(\bt)$ is null in yet another way, see \cite{MR3081774} for an explicit proof.
\end{itemize}

However, these approaches are not easy to generalise when considering badly approximable points in non-linear or `more complex' structures such as submanifolds of, or fractals in, $\R^n$. Also, less is known regarding Diophantine approximation in the $p$-adic, and more generally $S$-arithmetic setting, and other natural setups.

\subsection{Diophantine approximation on manifolds and fractals}\label{motivation_manifold_fractal}

When the $n$-dimensional Lebesgue measure is replaced by another measure $\mu$ on $\R^n$, for example, a measure supported on a smooth manifold or a fractal in $\R^n$, understanding the measure theoretic structure of problems, in particular, regarding badly approximable points, becomes far more difficult and leads to Diophantine approximation of dependent quantities. \par

There is a range of results available when the measure is supported on a manifold. One of the first was due to Khintchine, who proved that $\Bad(n)$ on the Veronese curves was null \cite{MR1544723}. We remark this was proven not by any of the approaches highlighted above.
In the case of planar curves the first named author, Dickinson and Velani proved a Khintchine type theorem on any $C^{(3)}$ non-degenerate planar curve \cite{BDV07}. A dual version of the result has also been proven, namely \cite[Theorem 1.1]{BBDD99}, and a weighted version of the result was established in \cite{BV07}. In higher dimensions a divergence Khintchine type theorem was established for any analytic non-degenerate submanifold of $\R^{n}$ \cite{B12} and any non-degenerate curve \cite{MR4287738}. However, the dual Khintchine type theorem was established well before that and includes all non-degenerate manifolds in $\R^n$, see \cite[Theorem 1.6]{BBKM02}. In \cite[Theorem 2]{BBV13} an $n$-dimensional dual weighted version of a Khintchine divergence statement was proven for inhomogeneous approximation on non-degenerate manifolds. By using a type (B) argument (and Mahler's Transference principle in the dual case) one can use any of the above Khintchine type statements to deduce that the corresponding set of badly approximable points restricted to the manifold is a null set (with respect to the induced Lebesgue measure over the manifold). We should remark that Mahler's Transference principle cannot be applied in the inhomogeneous setting so one cannot deduce immediately that the inhomogeneous equivalent of $\Bad(\bt)$ on non-degenerate manifolds is null.

Regarding type (A) approach, Shah \cite[Theorem~1.1]{MR2534098} proved that $\DI(n)$ on any analytic non-degenerate curve (and, via fibering \cite{B15}, on any analytic non-degenerate manifod) is null, and this gives an alternative way to deduce that $\Bad(n)$ is null on any analytic non-degenerate manifold in $\R^n$. This has since been extended to the weighted case by Shah and Yang \cite[Theorem~1.2]{ShahYang1}. Furthermore in a separate paper the same authors proved that $\DI(n)$ intersected with a real analytic manifold contained in a hyperplane (and satisfying various other constraints) is also a null set, and thus so is $\Bad(n)$ \cite[Theorem~1.3]{ShahYang2}. Note that the results using type (A) argument were preceded by a range of results on subsets of Dirichlet improvable points on manifolds (the so-called $\varepsilon$-Dirichlet improvable points), see \cite{MR0272722, MR485713, MR1068674, MR1895732, MR2366229}. However, these results are not sufficient to deduce corresponding results on the measure of $\Bad(n)$ or $\Bad(\bt)$. \par

Observe that many of the results above, with the exception of \cite{ShahYang2}, are reliant on the intersecting manifold being non-degenerate. As an application of our main result we prove that $\Bad(\bt)$ is null when intersected with any $C^{(2)}$ (not necessarily non-degenerate) submanifold with a mild condition on the orientation of its tangent planes.

The $\mu$-measure of $\Bad(\bt)$ when $\mu$ is supported on a fractal set is more complicated. In \cite[Theorem~1.5]{EFS11} $\Bad(n)$ was proven to be a null set for any invariant $\times n$ ergodic probability measure $\mu$. An example of such a measure includes the natural canonical self-similar measure on the middle third Cantor set. In the recent breakthrough \cite{MR3953505} $\Bad(n)$ was proven to be a null set for the Hausdorff $s$-measure restricted to the attractor of an irreducible ``contracting on average'' finite iterated function systems (IFS) satisfying the open set condition (OSC), where $s$ is the Hausdorff dimension of the attractor of the IFS. Such result was proven by a type (C) argument. This has recently been generalised to $\Bad(\bt)$ for certain values of $\bt$ dependent on the contractions of the maps in the IFS associated to the self affine McMullen measure \cite{ProhaskaSertShi}. An example of such a fractal includes Bedford-McMullen carpets satisfying certain criteria (see \cite[Theorem~1.11]{ProhaskaSertShi} for more details). In \cite{KhalilLuethi} a Khinthcine type theorem, that hence implies that $\Bad(n)$ is null,  was proven for a class of (not necessarily canonical) self similar measures on rational IFS satisfying the OSC and other certain conditions on the contractions of the IFS and the probability vector on the self similar measure (see \cite[Theorem~A]{KhalilLuethi} for more details). We note in conclusion that if the measure $\mu$ is the Patterson-Sullivan measure on the (fractal) limit set of a suitable isometry group, then one can use ergodicity of the geodesic flow as remarked earlier.

\subsection{Intrinsic and extrinsic approximations}

Within the setting of Diophantine approximation of dependent quantities one can ask for further restrictions on the set $\Bad(\bt)$. One such restriction is to replace the set of rational points within the definition of $\Bad(\bt)$ by the set of rational points that lie in (or out of) the manifold or fractal. With this in mind, define
\begin{align*}
\Bad_{\Int}(\bt)&:=\left\{ \bx \in \supp \mu : \, \inf_{\frac{\bp}{q} \in \Q^{n} \cap \supp \mu} \, \max_{1\le i\le n}q^{\tau_{i}}\left|qx_{i}-p_{i}\right|  > 0 \right\}, \\
\Bad_{\Ext}(\bt)&:=\left\{ \bx \in \supp \mu : \, \inf_{\frac{\bp}{q} \in \Q^{n} \backslash \supp \mu} \, \max_{1\le i\le n}q^{\tau_{i}}\left|qx_{i}-p_{i}\right|>0 \right\}.
\end{align*}
The points in $\Bad_{\Int}(\bt)$ ({\em resp.} $\Bad_{\Ext}(\bt)$) will be called intrinsically ({\em resp.} extrinsically) badly approximable.

Observe that $\mu(\Bad(\bt))=0$ trivially implies that both $\Bad_{\Int}(\bt)$ and $\Bad_{\Ext}(\bt)$ are null sets since
\begin{equation*}
\Bad(\bt)\cap \supp \mu =\Bad_{\Int}(\bt) \cap \Bad_{\Ext}(\bt)\,.
\end{equation*}

Intrinsically badly approximable points on manifolds is a relatively unexplored topic. Kleinbock and Merrill \cite[Theorem~1.3]{MR3430242} proved a Khintchine type theorem for intrinsic approximations on the unit sphere $S^{n} \subset \R^{n+1}$ centred at the origin which implies that $\Bad_{\Int}(0)$ is null. Here $0$ means that the approximation function is $q^{0}=1$. This was generalised to non-singular rational quadric hypersurfaces with rational rank greater than or equal to $1$ in \cite[Theorem~6.3]{MR4397037}. One of the reasons there is a lack of results in this setting for general manifolds is the difficulty in establishing the distribution of rational points on a general manifold, see \cite[Example 1.1]{FKMS18} for a simple set of curves each with different Dirichlet exponent. \par

In the case of intrinsic approximations on fractals the known results are proven under additional constrains on the rational points, giving rise to a set slightly different to $\Bad_{\Int}(\bt)$. One of the first results in this setting was due to Levesley, Salp and Velani \cite{LSV07} who proved a Khintchine-type result for the set of triadic approximable points (approximating points by rationals of the form $\frac{p}{3^{k}}$, $k \in\N$) on the middle third Cantor set. In \cite{FS14} a similar result was proven with the middle third Cantor set generalised to a base $b$ missing digit set satisfying certain conditions (see \cite[Theorem~3.10]{FS14} for more details). Recently a Khintchine-type result in the standard setting of intrinsic approximations on the middle third Cantor set (approximating points are not necessarily of the form $\frac{p}{3^{k}}$, $k \in\N$) was obtained in \cite{TanWangWu}. Any of these results implies that the corresponding set of intrinsically badly approximable points is null with respect to the corresponding Hausdorff measure restricted to the Cantor set.

Recently there has been interest in the approximation of points in a fractal by sets of dense subsets other than the rational points. Namely, given an iterated function system (IFS) $\Sigma$ and a corresponding fractal set $\cK \subset \R^{n}$ one can consider how well points in $\cK$ can be approximated by images of a point $x \in \cK$ under the maps in $\Sigma$. That is, given a fixed $x \in \cK$, the approximation points are taken to be
\begin{equation*}
\cR=\{f(x) : f \text{ is a countable composition of maps in } \Sigma \}.
\end{equation*}
Note that if the maps $f \in \Sigma$ are rational preserving and the initial point $x \in \Q^{n} \cap \cK$ then $\cR$ is a sub-collection of $\Q^{n}$, and so similar results to Levesley, Salp \& Velani can be proven by considering this setup. This method has proven fruitful for a range of fractals including conformal IFS with open set condition (OSC) \cite{AB21}, and more recently conformal IFS without the OSC \cite{SBaker}. Both these papers prove a Khintchine type theorem, so the measure of the corresponding intrinsically badly approximable set can be deduced via a type (B) argument. \\






\section{General framework}\label{GS}

Diophantine approximation to real points that we have discussed in the introduction can be considered in a variety of other metric spaces. For instance, one can consider rational approximations to points in the $p$--adic space $\Q_p^n$, approximations on the limit set of a Kleinian group $G$ by points in the orbit (under the group) of a certain distinguished limit point, and so on. In this section we will introduce a general framework of badly approximable points in metric spaces which we will use in several applications.

The aim is to incorporate weighted Diophantine approximation into the framework by considering a product metric space and allowing different rates of approximation in each of the components of the product. With this in mind let us fix an integer $n \geq 1$, and for each $1 \leq i \leq n$ let $(X_{i}, d_{i}, \mu_{i})$ be a metric space equipped with a $\sigma$-finite Borel regular measure $\mu_{i}$. Let $(X,d,\mu)$ be the product space with $X=\prod_{i=1}^{n}X_{i}$, $\mu=\mu_{1}\times\cdots\times\mu_n$ being the product measure, and
\begin{equation*}
d(\bx^{(1)},\bx^{(2)})=\max_{1 \leq i \leq n}d_{i}(x_i^{(1)},x_i^{(2)})\,, \qquad \text{where }\bx^{(j)}=(x_1^{(j)},\dots,x_n^{(j)})\,\,\text{for }j=1,2.
\end{equation*}
Let $\cR:=(R_{\alpha})_{\alpha}$ be a sequence of subsets $R_{\alpha} \subset X$ indexed by $\alpha \in \N$ (instead of $\N$ one can use any other countable set).
Let $\Phi=(\phi_{1}, \dots , \phi_{n})$ be an $n$-tuple of non-negative approximation functions defined on $\alpha$'s and such that $\phi_{i}(\alpha) \to 0$ as $\alpha \to \infty$. For any constant $c>0$ let $c\Phi=(c \phi_{1}, \dots , c \phi_{n})$. Let
\begin{equation*}
\Delta(R_{\alpha}, \Phi):= \left\{ \bx \in X : \, \, \exists \, \ba \in R_{\alpha} \,\text{ such that } \, d_{i}(a_{i},x_{i})< \phi_{i}(\alpha) \,\text{ for all } 1 \leq i \leq n \right\},
\end{equation*}
where $a_{i}$ and $x_{i}$ represent the $i$th component of $\ba$ and $\bx$ respectively $(1 \leq i \leq n)$.

Define the set $\W_{\cR}(\Phi)$ of $(\Phi, \cR)$-approximable points as follows
\begin{equation*}
\W_{\cR}(\Phi):=\limsup_{\alpha \to \infty } \Delta(R_{\alpha}, \Phi)\,.
\end{equation*}
That is $\W_{\cR}(\Phi)$ is the set of points in $X$ that lie in the $\Phi(\alpha)$-neighborhood $\Delta(R_{\alpha}, \Phi)$ of $R_{\alpha}$ for infinitely many $\alpha\in\N$.

Define the set $\Bad_{\cR}(\Phi)$ of $(\Phi, \cR)$-badly approximable points in $X$ as follows
\begin{equation}\label{BadR}
\Bad_{\cR}(\Phi):=\W_{\cR}(\Phi) \backslash \bigcap_{k \in \N} \W_{\cR}\left(\tfrac{1}{k}\Phi \right).
\end{equation}
That is, the set of $(\Phi, \cR)$-approximable points $\bx$ that cannot be `improved' by an arbitrary constant i.e. $\bx \in \W_{\cR}(\Phi)$ and there exists constant $c>0$ such that $\bx \not \in \W_{\cR}(c\Phi)$. Observe that the definition above aligns with the classical definition of weighted badly approximable points in $\R^n$ when the $n$-tuple of approximation functions $\Phi$ corresponds to weighted Dirichlet's theorem, that is when $\Phi(q)=(q^{-\tau_1},\dots,q^{-\tau_n})$, see \S\ref{weighted}. If $\Phi$ does not correspond to Dirichlet's Theorem, the set of $\Phi$-badly approximable points, that is  $\Bad_{\cR}(\Phi)$, may be null for trivial reasons, see the following paragraph. However, even then this set is still of significant interest, especially due to natural and obvious relations between the sets $\Bad_{\cR}(\Phi)$ and the so-called `Exact order' sets. For instance, determining the Hausdorff dimension of $\Bad_{\cR}(\Phi)$, let alone the Exact order sets, is a challenging open problem even in the classical setting, see \cite{BGN22, Bu03, Bu08, BM11, HX22, Z12} and references within for relevant definitions and state-of-the-art.

As we already mentioned $\W_{\cR}(\Phi)$ may be null for trivial reasons. In particular, if $\mu(\W_{\cR}(\Phi))=0$ then clearly $\mu(\Bad_{\cR}(\Phi))=0$, since $\Bad_{\cR}(\Phi) \subset \W_{\cR}(\Phi)$. Similarly if  there exists some $\varepsilon>0$ such that $\W_{\cR}(\Phi^{1+\varepsilon})$ contains almost every point, then $\mu(\Bad_{\cR}(\Phi))=0$ since $\Bad_{\cR}(\Phi) \subset \W_{\cR}(\Phi) \backslash \W_{\cR}(\Phi^{1+\varepsilon})$, where $\Phi^{1+\varepsilon}=(\phi_{1}^{1+\varepsilon}, \dots , \phi_{n}^{1+\varepsilon})$. In  scenarios such as the two we have just exposed achieving goal \eqref{goal} is of course trivial. However, this goal can be also be achieved in many non-trivial situations. In particular we prove the following:

\begin{theorem} \label{bad_0_general}
 Fix $n \in \N$ and for each $1\le i\le n$ let $(X_{i}, d_{i}, \mu_{i})$ be a metric space equipped with a $\sigma$-finite Borel regular measure $\mu_i$. Let $(X,d,\mu)$ be the corresponding product space introduced above. Let $\Phi=(\phi_{1}, \dots , \phi_{n})$ be an $n$-tuple of approximation functions with each $\phi_{i}(\alpha) \to 0$ as $\alpha \to \infty$ and let $\cR=\left(R_{\alpha}\right)_{\alpha\in \N}$ be a sequence of subsets of $X$. Assume at least one of the following holds:\\[-2ex]
\begin{itemize}
\item[{\rm(i)}] $\mu$ is doubling\footnote{See \S\ref{sec3.1} for the definition.}, and $R_{\alpha} \subset \supp \mu$ for each $\alpha \in \N$;\\[-1ex]
\item[{\rm(ii)}] $\phi_1=\dots=\phi_n$, and for any $0<\delta<\rho$ there exists a constant $c(\delta,\rho)>0$ such that for each sufficiently large $\alpha \in \N$ and every point $\bx \in R_{\alpha}$ we have that
\begin{equation*}
\mu(\Delta(\bx, \delta\Phi)) \geq c(\delta,\rho)\mu(\Delta(\bx, \rho\Phi))\,.
\end{equation*}
\end{itemize}
Then
\begin{equation}\label{Badisnull}
\mu(\Bad_{\cR}(\Phi))=0.
\end{equation}
\end{theorem}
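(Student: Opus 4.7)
The plan is to establish $\mu(\Bad_{\cR}(\Phi))=0$ via a scaling/density argument in the spirit of the Scaling Lemma. From \eqref{BadR} we write
$$
\Bad_{\cR}(\Phi)=\bigcup_{k\in\N}\big(\W_{\cR}(\Phi)\setminus\W_{\cR}(\tfrac{1}{k}\Phi)\big),
$$
and, using $\W_{\cR}(\tfrac{1}{k}\Phi)=\bigcap_{N}\bigcup_{\alpha\ge N}\Delta(R_\alpha,\tfrac{1}{k}\Phi)$, each term decomposes as
$$
\W_{\cR}(\Phi)\setminus\W_{\cR}(\tfrac{1}{k}\Phi)=\bigcup_{N\in\N}E_{k,N},\qquad E_{k,N}:=\W_{\cR}(\Phi)\cap\bigcap_{\alpha\ge N}\big(X\setminus\Delta(R_\alpha,\tfrac{1}{k}\Phi)\big).
$$
It therefore suffices to show $\mu(E_{k,N})=0$ for every $k,N\in\N$.

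For $\bx\in E_{k,N}$ there are infinitely many $\alpha\ge N$ with $\bx\in\Delta(R_\alpha,\Phi)$, and for each such $\alpha$ a witness $\ba_\alpha\in R_\alpha$ with $d_i(x_i,(a_\alpha)_i)<\phi_i(\alpha)$ for all $i$. The key observation is that the sub-box $\Delta(\{\ba_\alpha\},\tfrac{1}{k}\Phi)\subset \Delta(R_\alpha,\tfrac{1}{k}\Phi)$ is entirely disjoint from $E_{k,N}$, since by definition $E_{k,N}$ avoids $\Delta(R_\beta,\tfrac{1}{k}\Phi)$ for every $\beta\ge N$. Both $\bx$ and this sub-box sit inside the enveloping box $B_\alpha:=\Delta(\{\ba_\alpha\},(1+\tfrac{1}{k})\Phi)$, whose diameter tends to $0$. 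Under either hypothesis one obtains the uniform comparison
$$
\mu\big(\Delta(\{\ba_\alpha\},\tfrac{1}{k}\Phi)\big)\ge c_k\,\mu(B_\alpha),\qquad c_k>0:
$$
in case~(ii) this is precisely the hypothesis with $\delta=\tfrac{1}{k}$ and $\rho=1+\tfrac{1}{k}$; in case~(i) it follows from the product factorisation $\mu(\Delta(\{\ba\},r\Phi))=\prod_i\mu_i(B_i(a_i,r\phi_i(\alpha)))$, together with the fact that doubling of $\mu$ in the $\max$-metric forces each factor $\mu_i$ to be doubling, while $\ba_\alpha\in\supp\mu=\prod_i\supp\mu_i$ keeps all factor measures positive.

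With this comparison in hand, around every $\bx\in E_{k,N}$ we have a sequence $B_\alpha\ni\bx$ of vanishing diameter with $\mu(B_\alpha\setminus E_{k,N})\ge c_k\,\mu(B_\alpha)$. A Vitali-type covering argument then finishes the proof: for any $\varepsilon>0$, use outer regularity to pick an open $U\supset E_{k,N}$ with $\mu(U)\le(1+\varepsilon)\mu(E_{k,N})$, extract an (essentially) disjoint subfamily of $B_\alpha\subset U$ covering $\mu$-almost all of $E_{k,N}$, and conclude $\mu(E_{k,N})\le(1-c_k)(1+\varepsilon)\mu(E_{k,N})$, which forces $\mu(E_{k,N})=0$. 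The main technical obstacle is justifying the Vitali step for the relevant family of shrinking sets: in case~(i) the boxes $B_\alpha$ are not $d$-balls when the side-lengths $\phi_i(\alpha)$ differ substantially across coordinates, so one must supply a differentiation/covering theorem adapted to these specific product shapes, relying on the componentwise doubling of the $\mu_i$; in case~(ii) the $B_\alpha$ are genuine balls but $\mu$ is not assumed globally doubling, so the covering must be carried out using only the local doubling hypothesis at points of $R_\alpha$. Unifying these two variants is the crux of the argument.
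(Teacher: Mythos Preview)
Your overall strategy---reduce to showing each $E_{k,N}$ is null via a density argument exploiting that a definite fraction of each enveloping box $B_\alpha$ lies outside $E_{k,N}$---is precisely the paper's approach, and in case~(ii) your sketch is essentially the paper's Lemma~\ref{CI_balls} (a $5r$-covering argument using only the local comparison hypothesis at the centres). The measure comparison you derive in both cases is correct; in particular, your observation that doubling of the product measure forces each factor $\mu_i$ to be doubling is valid.

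The genuine gap is exactly where you flag it: the Vitali step in case~(i). The enveloping sets $B_\alpha$ are rectangles whose side ratios $\phi_1(\alpha):\cdots:\phi_n(\alpha)$ vary with $\alpha$, and a Vitali covering theorem for such a family does \emph{not} follow from componentwise doubling alone---differentiation with respect to axis-parallel rectangles of unbounded eccentricity fails already for Lebesgue measure on $\R^2$. So ``supply a differentiation/covering theorem adapted to these specific product shapes'' is not a routine step, and you have given no indication of how to execute it.

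The paper sidesteps this obstacle by Fubini's theorem (Lemma~\ref{CI_product}) rather than attempting any direct rectangle-covering argument. Fixing all coordinates but the $j$th, the rectangles slice to genuine balls in the single factor $X_j$; on each fibre the one-dimensional constant-invariance result (Lemma~\ref{CI_general}(i), built on Lemma~\ref{CI_balls}) applies because $\mu_j$ is doubling and the relevant centres lie in $\supp\mu_j$. Integrating over the remaining coordinates and iterating over $j=1,\dots,n$ yields $\mu\big(\W_{\cR}(\Phi)\setminus\W_{\cR}(c\Phi)\big)=0$ for every $c>0$. This Fubini reduction is the key idea missing from your proposal; with it in place of the problematic rectangle-Vitali step, the remainder of your argument goes through.
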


\bigskip

\subsection{Outline of the proof}\label{outline}

In view of the definition \eqref{BadR} of $\Bad_{\cR}(\Phi)$ establishing \eqref{Badisnull} can be achieved using the following simple and well known statement.

\begin{lemma} \label{measure_null}
Let $\mu$ be a measure over $X$, $(E_{k})_{k \in \N}$ be any decreasing sequence of subsets of $X$ such that
\begin{equation} \label{measure_constant}
\mu(E_{i}\setminus E_{j})=0\qquad\text{for all $i<j$. }
\end{equation}
Then for any $k \in \N$
\begin{equation*}
\mu\left(E_{k} \backslash \bigcap_{i=1}^{\infty}E_{i} \right)=0.
\end{equation*}
\end{lemma}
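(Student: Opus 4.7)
The plan is to observe that the set $E_{k}\setminus\bigcap_{i=1}^{\infty}E_{i}$ has an immediate representation as a countable union of the sets $E_{k}\setminus E_{i}$, and then to invoke countable subadditivity of $\mu$ together with the hypothesis \eqref{measure_constant}.

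More precisely, first I would write
\begin{equation*}
E_{k}\setminus\bigcap_{i=1}^{\infty}E_{i}\;=\;E_{k}\cap\bigcup_{i=1}^{\infty}E_{i}^{c}\;=\;\bigcup_{i=1}^{\infty}\bigl(E_{k}\setminus E_{i}\bigr).
\end{equation*}
Next I would split this union according to whether $i\le k$ or $i>k$. For $i\le k$ the monotonicity $E_{1}\supset E_{2}\supset\cdots$ gives $E_{k}\subset E_{i}$, hence $E_{k}\setminus E_{i}=\emptyset$ and these terms contribute nothing. For $i>k$ the hypothesis \eqref{measure_constant} gives $\mu(E_{k}\setminus E_{i})=0$.

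Finally, applying countable subadditivity,
\begin{equation*}
\mu\!\left(E_{k}\setminus\bigcap_{i=1}^{\infty}E_{i}\right)\;\le\;\sum_{i=k+1}^{\infty}\mu\bigl(E_{k}\setminus E_{i}\bigr)\;=\;0,
\end{equation*}
which is the desired conclusion. There is no real obstacle here: the lemma is a direct consequence of monotonicity of the sequence, the hypothesis that consecutive (more generally, ordered) differences are null, and countable subadditivity of the measure; the only thing to be careful about is ensuring that the union is indeed countable (which is automatic since the indexing set is $\mathbb{N}$) so that subadditivity applies without needing any regularity assumption on $\mu$.
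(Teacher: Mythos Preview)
Your proof is correct and essentially identical to the paper's own argument: the paper likewise writes $E_{k}\setminus\bigcap_{i}E_{i}=\bigcup_{i=k+1}^{\infty}(E_{k}\setminus E_{i})$ and applies countable subadditivity together with \eqref{measure_constant}. The only cosmetic difference is that the paper starts the union directly at $i=k+1$, while you explicitly note that the terms with $i\le k$ are empty by monotonicity.
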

\begin{proof}
For completeness we give a proof. Since $E_{1} \supset E_{2} \supset E_{3} \dots$, by the sub-additivity of $\mu$,
\begin{align*}
0\le \mu\left(E_{k} \backslash \bigcap_{i=1}^{\infty}E_{i} \right)&=\mu\left(\bigcup_{i=k+1}^{\infty}E_{k} \backslash E_{i} \right)
 \le \sum_{i=k+1}^{\infty}\mu\left(E_{k} \backslash E_{i} \right)\stackrel{\eqref{measure_constant}}{=}0\,,
\end{align*}
as claimed.
\end{proof}

\smallskip

Now, returning to the outline of the proof of Theorem~\ref{bad_0_general}, in view of \eqref{BadR}, the proof of this theorem will be obtained if we can show that the sequence $E_k=\W_{\cR}(\frac1k\Phi)$ satisfies \eqref{measure_constant}, since this sequence is easily seen to be decreasing. The following definition formalises property \eqref{measure_constant} in relation to the sets $\W_{\cR}(\frac1k\Phi)$.

\begin{definition}\label{constant_invariant}\rm
Let $(X, d, \mu)$ be the product space and $\W_{\cR}(\Phi)$ be the limsup set defined in \S\ref{GS}. We will say that $\W_{\cR}(\Phi)$ is \textit{constant invariant} if for any $c>0$
\begin{equation*}
\mu\big(\W_{\cR}(\Phi)\setminus \W_{\cR}(c\Phi)\big)=0\,.
\end{equation*}
\end{definition}

The following statement can be regarded as an `abstract'  generalisation of Theorem~\ref{bad_0_general} and, as explained above, is a direct implication of Lemma~\ref{measure_null}.

\begin{theorem} \label{bad_0}
Let $(X, d, \mu)$ be the product space and $\W_{\cR}(\Phi)$ be the limsup set defined in \S\ref{GS}. Suppose that $\W_{\cR}(\Phi)$ is constant invariant. Then
\begin{equation*}
\mu\left(\Bad_{\cR}(\Phi)\right)=0.
\end{equation*}
\end{theorem}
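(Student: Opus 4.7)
The plan is to reduce the statement directly to Lemma~\ref{measure_null} by taking $E_k=\W_{\cR}(\tfrac1k\Phi)$ and verifying its two hypotheses: that the $E_k$ form a decreasing sequence, and that $\mu(E_i\setminus E_j)=0$ whenever $i<j$. The outline in \S\ref{outline} essentially announces this, so the proof is really just book-keeping around the constant invariance hypothesis.

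First I would check that $E_1\supseteq E_2\supseteq\cdots$. If $k<\ell$ then $\tfrac1\ell\phi_i(\alpha)\le \tfrac1k\phi_i(\alpha)$ for every $i$ and every $\alpha$, so $\Delta(R_\alpha,\tfrac1\ell\Phi)\subseteq \Delta(R_\alpha,\tfrac1k\Phi)$, and passing to the $\limsup$ gives $E_\ell\subseteq E_k$. In particular $E_k\subseteq E_1=\W_{\cR}(\Phi)$ for all $k\ge1$, and so
\begin{equation*}
\Bad_{\cR}(\Phi)=\W_{\cR}(\Phi)\setminus\bigcap_{k\in\N}\W_{\cR}\!\left(\tfrac1k\Phi\right)=E_1\setminus\bigcap_{k\in\N}E_k.
\end{equation*}

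Next I would establish the key property \eqref{measure_constant}. Fix $i<j$. Since $E_i\subseteq E_1=\W_{\cR}(\Phi)$, we have
\begin{equation*}
E_i\setminus E_j\;\subseteq\;\W_{\cR}(\Phi)\setminus \W_{\cR}\!\left(\tfrac1j\Phi\right),
\end{equation*}
and the right-hand set has $\mu$-measure zero by the constant invariance of $\W_{\cR}(\Phi)$ applied with constant $c=1/j>0$. Hence $\mu(E_i\setminus E_j)=0$, which is precisely hypothesis \eqref{measure_constant} of Lemma~\ref{measure_null}.

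Finally I would invoke Lemma~\ref{measure_null} with $k=1$ to conclude that $\mu(E_1\setminus\bigcap_{k}E_k)=0$, which by the displayed identity above is $\mu(\Bad_{\cR}(\Phi))=0$, as required. There is no genuine obstacle in this argument; the entire content of the theorem has been packaged into the definition of constant invariance and the elementary measure lemma. The real work — verifying constant invariance in concrete settings, which is where hypotheses (i) or (ii) of Theorem~\ref{bad_0_general} and the doubling/scaling properties of $\mu$ enter — lies downstream of this abstract statement and is not needed here.
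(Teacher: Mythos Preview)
Your proof is correct and follows exactly the approach the paper indicates: the paper states that Theorem~\ref{bad_0} ``is a direct implication of Lemma~\ref{measure_null}'' with $E_k=\W_{\cR}(\tfrac1k\Phi)$, and you have carried out precisely that verification.
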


The proof of Theorem~\ref{bad_0_general} given in this section will boil down to establishing that $\W_{\cR}(\Phi)$ is constant invariant under the conditions stated in these theorems.

The constant invariant condition formulated above was verified in the past for various limsup sets arising in Diophantine approximation. One of the first statements of this type was proven by Cassels in \cite[Lemma 9]{C50}, who showed that in the classical case ($X=\R$, $\cR=\Q$) the measure of $c\phi$-approximable points is unchanged by the choice of constant $c>0$. Since then similar results have been proven in a variety of settings. In \cite{BV08} Beresnevich and Velani proved the $n$-dimensional simultaneous version of Cassel's result, and furthermore showed that such a result held for more general $\limsup$ sets, see \cite[Lemma 1, Lemma 4]{BV08}. In \cite{MR3081774} an $n$-dimensional weighted version of Cassel's result was deduced. In \cite[Lemma 4.1, Lemma 4.2]{BLW21b} an $n$-dimensional weighted version of Cassel's result was proven in an ultrametric setting.

In \S\ref{applications} we will apply Theorem~\ref{bad_0_general} to a range of settings including $S$-arithmetic Diophantine approximation, approximation on fractals, and Diophantine approximation on manifolds. In relation to the latter, our proof enables us to by-pass the intrinsic nature of approximations required by condition (i) of Theorem~\ref{bad_0_general}.

\section{Proof of Theorem~\ref{bad_0_general}}\label{gen2}

\subsection{Auxiliary results}\label{sec3.1}
We start with auxiliary results and definitions used in the proofs. Recall that the measure $\mu$ is called {\em doubling} if there exists $\lambda>0$ and some $r_{0}>0$ such that for any ball $B=B(x,r)$ centred at $x\in\supp \mu$ of radius $0<r<r_{0}$ we have that
\begin{equation*}
\mu(2B) \leq \lambda \;\mu(B),
\end{equation*}
whereby $cB$ means the ball $B$ with radius multiplied by $c$, {\em i.e.} $cB=B(x,cr)$. Throughout, given a ball $B=B(x,r)$, $r(B)$ will stand for the radius $r$ of $B$.

The following result from geometric measure theory is often referred to as the $5r$-covering lemma, see \cite[Theorem 1.2]{Heinonen}.

\begin{lemma}[$5r$-covering lemma]
In any metric space a collection $\cF$ of uniformly bounded balls contains a disjoint subcollection $\cF'$ such that
\begin{equation*}
\bigcup_{B \in \cF} B \subseteq \bigcup_{B \in \cF'}5B.
\end{equation*}
Furthermore for any $B \in \cF$ there exists $B' \in \cF'$ such that $B' \cap B \neq \varnothing$ and $r(B') \geq \frac{1}{2}r(B)$.
\end{lemma}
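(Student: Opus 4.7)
The plan is to apply a greedy selection organised by dyadic scales of radii. Since the balls in $\cF$ are uniformly bounded, I will fix $R>0$ with $r(B)\le R$ for every $B\in\cF$ and partition $\cF=\bigsqcup_{j\ge 0}\cF_j$, where $\cF_j=\{B\in\cF : R\cdot 2^{-(j+1)}<r(B)\le R\cdot 2^{-j}\}$. I will then construct $\cF'$ stage by stage. Take $\cF'_0\subseteq\cF_0$ to be a maximal (under inclusion) pairwise disjoint subcollection, whose existence is guaranteed by Zorn's lemma applied to the poset of pairwise disjoint subfamilies of $\cF_0$ ordered by inclusion. Having chosen $\cF'_0,\dots,\cF'_{j-1}$, let $\cF'_j\subseteq\cF_j$ be a maximal pairwise disjoint subcollection among those balls in $\cF_j$ that are disjoint from every element of $\cF'_0\cup\dots\cup\cF'_{j-1}$. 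Finally set $\cF':=\bigcup_{j\ge 0}\cF'_j$.

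Disjointness of $\cF'$ will then be immediate from the construction: balls within a common $\cF'_j$ are disjoint by the maximality clause, and balls from different stages are disjoint because the selection at stage $j$ explicitly requires disjointness from every ball already chosen. For the covering property, I will fix $B\in\cF$ and let $j$ be the unique index with $B\in\cF_j$. By the maximality of $\cF'_j$ there must exist some $B'\in\cF'_0\cup\dots\cup\cF'_j$ with $B\cap B'\ne\varnothing$, for otherwise $B$ could be adjoined to $\cF'_j$, contradicting maximality. This $B'$ belongs to $\cF_i$ for some $i\le j$, which forces $r(B')>R\cdot 2^{-(i+1)}\ge R\cdot 2^{-(j+1)}\ge \tfrac12 r(B)$. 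A short triangle-inequality computation then delivers $B\subseteq 5B'$: picking any $z\in B\cap B'$, any $y\in B$, and denoting the centre of $B'$ by $x'$, I get $d(y,x')\le d(y,z)+d(z,x')\le 2r(B)+r(B')\le 4r(B')+r(B')=5r(B')$. The \emph{furthermore} clause is actually a byproduct of the very same argument: the ball $B'$ just produced already satisfies both $B'\cap B\ne\varnothing$ and $r(B')\ge\tfrac12 r(B)$.

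I do not anticipate any serious obstacle here, as this is a textbook Vitali-type argument. The only genuinely non-algorithmic ingredient is the appeal to Zorn's lemma (or, equivalently, the Hausdorff maximal principle) at each scale to secure a maximal disjoint subcollection; if one were willing to restrict to a countable $\cF$, this could be replaced by a straightforward transfinite-free greedy enumeration inside each $\cF_j$. The bookkeeping step that requires the most care is aligning the dyadic scales so that the radius comparison $r(B)<2r(B')$ — and hence the factor $5$ in the triangle inequality — is available; choosing the half-open partition above makes this automatic.
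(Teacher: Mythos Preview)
Your argument is correct and is essentially the standard proof of the $5r$-covering lemma; the paper itself does not supply a proof but simply cites \cite[Theorem~1.2]{Heinonen}, where the same dyadic greedy construction appears. There is nothing to compare against, and your write-up stands on its own.
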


We will also use Fubini's Theorem, which we recall below in the special case of integrating the characteristic function of a measurable set, see \cite[p. 233]{MR1324786} or \cite[\S2.6.2]{MR0257325}.

\begin{theorem}[Fubini's Theorem] Let $\mu_{1}$ be a $\sigma$-finite measure over $X$ and $\mu_{2}$ be a $\sigma$-finite measure over $Y$. Then $\mu_{1} \times \mu_{2}$ is a regular measure over $X \times Y$. Let $S \subseteq X \times Y$ be a $\mu_{1}\times \mu_{2}$ measurable set. Then for $\mu_1$ almost every $x\in X$ $(\;\mu_2$ almost every $y\in Y\;)$ the set
\begin{align*}
S^{x}:=\{y : (x,y) \in S \}\qquad(\text{resp. }\; S_{y}:=\{x: (x,y) \in S \}\;)
\end{align*}
is $\mu_2$-measurable $($resp. $\mu_1$-measurable$)$, and
\begin{equation*}
(\mu_{1}\times\mu_{2})(S)  = \int_X \mu_{2}(S_{x}) d \mu_{1} = \int_Y \mu_{1}(S^{y}) d \mu_{2}\;.
\end{equation*}
\end{theorem}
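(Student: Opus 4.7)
The plan is to construct the product measure in stages and then prove the iterated-integral identity by a monotone-class argument, with a separate step to upgrade from the product $\sigma$-algebra to its completion. First I would define $\mu_1\times\mu_2$ on measurable rectangles by $(\mu_1\times\mu_2)(A\times B):=\mu_1(A)\mu_2(B)$ and extend via Carath\'eodory's outer-measure construction, first to the $\sigma$-algebra $\mathcal{A}_1\otimes\mathcal{A}_2$ generated by rectangles, and then by completion to the class of $\mu_1\times\mu_2$-measurable sets. Regularity of the resulting measure follows from the standard Carath\'eodory approximation property, since any measurable set can be sandwiched between a countable union of rectangles from above and a countable union from below, up to arbitrarily small outer-measure error. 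I would also immediately reduce to the case of \emph{finite} measures: using $\sigma$-finiteness, write $X=\bigsqcup_n X_n$ and $Y=\bigsqcup_m Y_m$ with $\mu_1(X_n),\mu_2(Y_m)<\infty$, prove the identity on each rectangle $X_n\times Y_m$, and then sum using countable additivity and monotone convergence on the section integrands.

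For the finite-measure case I would run a Dynkin system argument. Let $\mathcal{L}$ denote the collection of all $S\in\mathcal{A}_1\otimes\mathcal{A}_2$ such that the functions $x\mapsto\mu_2(S_x)$ and $y\mapsto\mu_1(S^y)$ are measurable and
\begin{equation*}
(\mu_1\times\mu_2)(S)=\int_X\mu_2(S_x)\,d\mu_1=\int_Y\mu_1(S^y)\,d\mu_2.
\end{equation*}
For a rectangle $S=A\times B$ one has $S_x=B$ if $x\in A$ and $S_x=\varnothing$ otherwise, so both iterated integrals equal $\mu_1(A)\mu_2(B)$, placing rectangles inside $\mathcal{L}$. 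Next I would verify that $\mathcal{L}$ is a $\lambda$-system: closure under proper differences uses the finiteness of the measures together with linearity of the integral, while closure under countable increasing unions follows by applying the monotone convergence theorem to the section integrands. Since rectangles form a $\pi$-system generating $\mathcal{A}_1\otimes\mathcal{A}_2$, Dynkin's $\pi$--$\lambda$ theorem forces $\mathcal{L}=\mathcal{A}_1\otimes\mathcal{A}_2$, which proves the theorem for all sets in the uncompleted product $\sigma$-algebra.

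The final step promotes the identity to an arbitrary $\mu_1\times\mu_2$-measurable set $S$. By the construction of the Carath\'eodory completion I can choose $S',S''\in\mathcal{A}_1\otimes\mathcal{A}_2$ with $S''\subseteq S\subseteq S'$ and $(\mu_1\times\mu_2)(S'\setminus S'')=0$. Applying the previous step to $N:=S'\setminus S''$ yields $\int_X\mu_2(N_x)\,d\mu_1=0$, hence $\mu_2(N_x)=0$ for $\mu_1$-almost every $x$. On this full-measure set the sections $S_x$ differ from the measurable $(S'')_x$ by a $\mu_2$-null set, so $S_x$ is $\mu_2$-measurable and $\mu_2(S_x)=\mu_2((S'')_x)$ a.e., and the identity for $S$ follows from the identity for $S''$. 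The symmetric argument handles $S^y$.

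The main obstacle is the completion step: in general sections $S_x$ of a $\mu_1\times\mu_2$-measurable set need not be $\mu_2$-measurable for every $x$, and one must leverage $\sigma$-finiteness together with the observation that null sets in the product have null sections almost everywhere to obtain the necessary ``almost every'' qualifier in the statement. A secondary technical point is making the $\sigma$-finite reduction rigorous: one must keep track of the interchange of the countable sum over the pieces $X_n\times Y_m$ with the integrals, which is justified by Tonelli-type monotone convergence applied to the non-negative section measures — routine but worth spelling out explicitly to avoid a circular invocation of the very theorem being proved.
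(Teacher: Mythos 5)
The paper does not prove this statement at all: Fubini's Theorem is quoted as a classical background result, with the proof delegated to the cited references (Federer and the standard measure-theory literature), so there is no argument of the authors' to compare yours against. Your outline is the standard textbook proof and is essentially correct: construct $\mu_1\times\mu_2$ by the Carath\'eodory extension of the rectangle premeasure, reduce to finite measures via the $\sigma$-finite decomposition, run the Dynkin $\pi$--$\lambda$ argument on the product $\sigma$-algebra with monotone convergence handling increasing unions, and then pass to arbitrary $\mu_1\times\mu_2$-measurable sets by sandwiching $S''\subseteq S\subseteq S'$ with a null difference and using that null sets in the product have null sections almost everywhere. The one point you should make explicit is in the last step: to conclude that $S_x$ is $\mu_2$-measurable from $S_x\setminus(S'')_x\subseteq N_x$ with $\mu_2(N_x)=0$, you need subsets of $\mu_2$-null sets to be $\mu_2$-measurable, i.e.\ completeness of $\mu_2$ or the Carath\'eodory outer-measure convention; in the paper's framework (Borel regular measures in Federer's sense, where measures are defined on all subsets and measurability is Carath\'eodory measurability) this is automatic, but in an abstract $\sigma$-algebra setting the conclusion only holds for the completion. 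With that caveat spelled out, your proof is sound and matches the argument in the sources the paper cites.
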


\medskip

\subsection{Constant invariant limsup sets: the case of balls}
We begin by generalising \cite[Lemma 9]{C50}, \cite[Lemma 1]{BV08}, \cite[Lemma 4.1]{BLW21b} on $\limsup$ sets of balls in $\R^n$ to any metric space equipped with a $\sigma$-finite Borel regular measure. The proof relies on the observation that for any $\limsup$ set of balls $\{B_{i}\}$ with radii tending to zero, the set $\{B_{i}\}$ is \textit{fine}\footnote{A collection of balls $\cB$ is \textit{fine} on a set $A$ if for any point $x \in A$ and any $r>0$ there exists $B \in \cB$ such that $x \in B$ and $r(B)<r$ i.e. each point has a subsequence of converging balls. To achieve this property one often considers Vitali Spaces, see \cite[Remark 1.13]{Heinonen} for a defintion. Examples of Vitali Spaces include $(\R^{n},\mu)$ equipped with any Radon measure or any metric space equipped with a doubling measure.}   on the limsup set and so a Lebesgue density type statement can be proven. From there a contradiction can be derived using the standard method given in the above mentioned lemmas. The proof follows the ideas presented in the proofs of \cite[Theorem 1.6]{Heinonen} and \cite[Lemma 4.1]{BLW21b}.

\begin{lemma} \label{CI_balls}
Let $(X, d, \mu)$ be a metric measure space equipped with a $\sigma$-finite Borel regular measure $\mu$. Let $(B_{i})_{i \in \N}$ be a sequence of closed balls in $X$ with the radii $r(B_{i}) \to 0$ as $i \to \infty$. Furthermore assume there exists constant $c_{1}>0$ such that for each $i \in \N$
\begin{equation} \label{c_{1}}
\mu(B_{i}) \geq c_{1}\,\mu(5B_{i})\qquad\text{for all $i\in\N$}.
\end{equation}
Let $(U_{i})_{i \in \N}$ be a sequence of $\mu$-measurable subsets of $X$ such that $U_{i} \subset B_{i}$ for each $i \in \N$ and there exists some fixed constant $c_{2}>0$ such that
\begin{equation} \label{c_{2}}
\mu(U_{i}) \geq c_{2}\,\mu(B_{i}).
\end{equation}
Then the difference of the limsup sets
\begin{equation*}
\cB=\limsup_{i \to \infty} B_{i} \qquad \text{and} \qquad \cU=\limsup_{i \to \infty} U_{i}
\end{equation*}
is null, that is
\begin{equation*}
\mu(\cB\setminus\cU)=0\,.
\end{equation*}
\end{lemma}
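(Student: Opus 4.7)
The plan is to argue by contradiction in the spirit of Cassels' classical scheme (\cite{C50,BV08,BLW21b}), transplanted from $\R^n$ to a general metric measure space by means of the $5r$-covering lemma. Assume $\mu(\cB\setminus\cU)>0$; I will show that this forces $\mu(\cB\setminus\cU)$ to be bounded by $c_1^{-1}c_2^{-1}\varepsilon$ for every $\varepsilon>0$, which is the contradiction.

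The crucial decomposition is the following. Set $A:=\cB\setminus\cU$ and, for each $N\in\N$,
\[
A_N:=A\cap\bigcap_{i\ge N}(X\setminus U_i)\,.
\]
Because $\cU=\limsup U_i$, every $x\in A$ lies in $U_i$ for only finitely many $i$, so $A=\bigcup_{N\in\N}A_N$. Each $A_N$ is measurable since the $U_i$ are. Thus the assumption $\mu(A)>0$ forces $\mu(A_N)>0$ for some $N$, and $\sigma$-finiteness of $\mu$ allows me to further localise so that $0<\mu(A_N)<\infty$. Fix $\varepsilon>0$ and, using outer regularity of $\mu$ (a standard consequence of $\sigma$-finiteness and Borel regularity on a metric space), choose an open set $W\supset A_N$ with $\mu(W)\le\mu(A_N)+\varepsilon$.

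Next I would construct a fine cover of $A_N$ and apply the $5r$-covering lemma already quoted in the excerpt. For each $x\in A_N$, membership in $\cB$ supplies infinitely many $i\ge N$ with $x\in B_i$; since $r(B_i)\to0$ and $W$ is open, all but finitely many such balls satisfy $B_i\subset W$ (using $B_i\subset B(x,2r(B_i))$). Therefore
\[
\cF:=\{\,B_i:i\ge N,\ B_i\subset W,\ B_i\cap A_N\ne\varnothing\,\}
\]
is a uniformly bounded collection that covers $A_N$ and is fine on $A_N$. The $5r$-covering lemma yields a countable disjoint subfamily $\cF'\subset\cF$ with $A_N\subset\bigcup_{B\in\cF'}5B$. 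Using \eqref{c_{1}}, \eqref{c_{2}}, the disjointness of the balls in $\cF'$, and the key observation that by the definition of $A_N$ every $U_i$ with $i\ge N$ is disjoint from $A_N$, I would chain the estimates
\[
\mu(A_N)\le\sum_{B_i\in\cF'}\mu(5B_i)\le c_1^{-1}\sum_{B_i\in\cF'}\mu(B_i)\le c_1^{-1}c_2^{-1}\sum_{B_i\in\cF'}\mu(U_i)=c_1^{-1}c_2^{-1}\mu\Big(\bigsqcup_{B_i\in\cF'}U_i\Big)\le c_1^{-1}c_2^{-1}\mu(W\setminus A_N)\le c_1^{-1}c_2^{-1}\varepsilon\,,
\]
and then let $\varepsilon\to0$ to conclude $\mu(A_N)=0$, contradicting its positive measure.

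The only substantive geometric ingredient is the Vitali-style covering step: the local doubling hypothesis \eqref{c_{1}} is precisely what promotes the fine cover $\cF$ into a disjoint packing $\cF'$ with controlled measure loss through the $5r$-lemma. The mildly delicate point I would need to handle carefully is the outer regular approximation of $A_N$ by an open $W$ of nearly equal measure, which relies on $\sigma$-finiteness together with Borel regularity of $\mu$; once that is secured, the argument is an abstract analogue of \cite[Lemma~1]{BV08} and \cite[Lemma~4.1]{BLW21b}, and I do not expect any genuine obstacle beyond this bookkeeping.
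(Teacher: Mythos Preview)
Your argument is correct. The decomposition $A=\bigcup_N A_N$, the use of outer regularity, and the passage to a fine cover via the $5r$-lemma are exactly as in the paper's proof (your $A_N$ coincides with the paper's $\cD_\ell$). Where you diverge is in the endgame: the paper, after extracting the disjoint subfamily $\cG$, first proves that $\cG$ almost covers $\cD_\ell\cap F_j$ (a Vitali-type density statement using the ``furthermore'' clause of the $5r$-lemma), then locates a single ball $B_{i_0}\in\cG$ in which $\cD_\ell\cap F_j$ has density exceeding $1-c_2$, and derives the contradiction from $U_{i_0}\subset B_{i_0}\setminus\cD_\ell$. You instead bypass the density step entirely with the direct chain
\[
\mu(A_N)\le\sum_{\cF'}\mu(5B_i)\le c_1^{-1}\sum_{\cF'}\mu(B_i)\le c_1^{-1}c_2^{-1}\sum_{\cF'}\mu(U_i)\le c_1^{-1}c_2^{-1}\mu(W\setminus A_N)\le c_1^{-1}c_2^{-1}\varepsilon,
\]
exploiting that the $U_i$ inherit disjointness from the $B_i\in\cF'$ and lie in $W\setminus A_N$. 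This is shorter and uses only the basic conclusion of the $5r$-lemma; the paper's route, while more elaborate, makes the underlying density mechanism explicit. Two small points to tidy: your opening sentence promises to bound $\mu(\cB\setminus\cU)$ by $c_1^{-1}c_2^{-1}\varepsilon$, but what you actually bound is $\mu(A_N)$ (after localisation), which is what is needed; and when you write $\sum_{B_i\in\cF'}\mu(U_i)$ you should note that for each ball $B\in\cF'$ you are fixing one index $i\ge N$ with $B_i=B$, so that the corresponding $U_i$ are genuinely disjoint.
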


\begin{proof}
Define
\begin{equation*}
\cU_{j}:=\bigcup_{i \geq j}U_{i} \, , \quad \cD_{j}:=\cB \backslash \cU_{j} \, , \quad \cD:= \bigcup_{j \in \N} \cD_{j}\, .
\end{equation*}
Observe that $\cD=\cB \backslash \bigcap_{j\in\N}\cU_j=\cB \backslash \cU$. Therefore, if $\mu(\cD)=0$ we are done. Assume the contrary. Then, since $\cU_{j} \supseteq \cU_{j+1}$ for all $j \in \N$ we have that $\cD_{j} \subseteq \cD_{j+1}$ for all $j \in \N$ and, by the continuity of $\mu$ and the assumption that $\mu(\cD)>0$, there must exist some $\ell \in \N$ such that
$$
\mu(\cD_{\ell})>0.
$$

Since $\mu$ is $\sigma$-finite there exists a countable collection $(F_{j})_{j \in \N}$ of $\mu$-measurable subsets of $X$ of finite $\mu$-measure such that
\begin{equation}\label{cond0}
\cD_{\ell}\subset \bigcup_{j\in\N}F_j.
\end{equation}
Fix $j \in \N$ and consider the set $\cD_{\ell} \cap F_{j}$. Since $\mu$ is Borel regular, by the outer regularity of $\mu$, for any $\varepsilon_1>0$ there exists an open set $A$ such that
\begin{equation}\label{cond1}
\cD_{\ell} \cap F_{j} \subseteq A \qquad \text{ and } \qquad \mu(A) \leq \mu(\cD_{\ell} \cap F_{j}) + \varepsilon_1\, .
\end{equation}
Observe that $\cD_{\ell} \subseteq \cB\subset\bigcup_{k\ge i}B_k$ for every $i\in\N$. Therefore, since $r(B_i)\to0$ as $i\to\infty$, for every point $x \in \cD_{\ell} \cap F_{j}$ and any $r>0$ there exists some ball $B_{k}$ from the sequence $(B_{i})_{i \in \N}$ such that $r(B_{k})<r$, $k\ge\ell$ and $x \in B_{k}$. In particular, since, by \eqref{cond1}, $x\in A$ and $A$ is open, $B_k$ can be chosen so that $5B_k\subset A$. Thus there exists a subfamily
\begin{equation*}
\cF \subset \{B_{i}:i \in \N,\; i \geq \ell,\, 5B_i\subset A\}
\end{equation*}
 such that
\begin{equation*}
\bigcup_{B_{i} \in \cF} 5B_{i} \subseteq A \qquad \text{ and } \qquad \cD_{\ell} \cap F_{j} \subseteq \bigcup_{B_{i} \in \cF}B_{i}\,.
\end{equation*}
By the $5r$-covering lemma, there exists a disjoint subfamily $\cG \subset \cF$ such that
\begin{equation*}
\bigcup_{B_{i} \in \cG}5B_{i} \supseteq \bigcup_{B_{i} \in \cF} B_{i},
\end{equation*}
and furthermore
\begin{equation}\label{cond2}
\text{$\forall$\; $B_{k} \in \cF$ \;\;$\exists$ $B_{t} \in \cG$ such that $B_{k}\cap B_t\neq\varnothing$ and $r(B_{t}) \geq \tfrac{1}{2}r(B_{k})$.}
\end{equation}

We now show that $\cG$ almost covers $\cD_{\ell} \cap F_{j}$, in the sense that
\begin{equation} \label{cover1}
\mu\left((\cD_{\ell} \cap F_{j}) \backslash \bigcup_{B_{i} \in \cG} B_{i} \right) =0\,.
\end{equation}
Observe that
\begin{equation*}
\sum_{B_{i} \in \cG} \mu(5B_{i}) \; \overset{\eqref{c_{1}}}{\leq}\; \frac{1}{c_{1}}\sum_{B_{i} \in \cG} \mu(B_{i})
= \frac{1}{c_{1}}\mu\left(\bigcup_{B_{i} \in \cG} B_{i}\right) \le \frac{1}{c_{1}}\mu(A) \; \stackrel{\eqref{cond1}}{\le} \;
\frac{1}{c_{1}}\left(\mu(F_j)+\varepsilon_1\right) < \infty\,.
\end{equation*}
This implies that for any $\varepsilon_{2}>0$ there exists $N \in \N_{\geq\ell}$ such that
\begin{equation} \label{5r-sum}
\mu\left( \bigcup_{B_{i} \in \cG: i > N} 5B_{i} \right) \leq \sum_{B_{i} \in \cG: i > N} \mu(5B_{i}) < \varepsilon_{2}\,.
\end{equation}
Considering any point
\begin{equation*}
x \in (\cD_{\ell} \cap F_{j}) \backslash \bigcup_{B_{i} \in \cG : i \leq N}B_{i} \,
\end{equation*}
observe that, since the balls $B_{i}$ are closed, we may choose some ball $B_{k} \in \cF$ such that $x \in B_{k}$ and $B_{k}$ is pairwise disjoint from each ball in the set $\{ B_{i} \in \cG : i \leq N\}$. By \eqref{cond2}, this implies that there must exist some ball, say $B_{t} \in \{B_{i} \in \cG : i >N \}$ such that $B_{k}$ intersects $B_{t}$ and $r(B_{t}) \geq \frac{1}{2}r(B_{k})$. Hence $x \in 5B_{t}$ and therefore
\begin{equation*}
 (\cD_{\ell} \cap F_{j}) \backslash \bigcup_{B_{i} \in \cG : i \leq N}B_{i} \subseteq \bigcup_{B_{i} \in \cG : i>N}5B_{i}\,.
\end{equation*}
Combining this with \eqref{5r-sum} and letting $N \to \infty$ gives \eqref{cover1}.

 Let
 \begin{equation*}
 \lambda=\sup\left\{ \frac{\mu(\cD_{\ell} \cap F_{j} \cap B_{i})}{\mu(B_{i})} \, : B_{i} \in \cG \,  \, \text{ and } \, \mu(B_{i}) >0 \right\}.
 \end{equation*}
 Note that the above set is non-empty in view of the fact that $\mu(\cD_{\ell} \cap F_{j})>0$ and \eqref{cover1}. Then, by \eqref{cover1}, the disjointness of the collection $\cG$ and the definition of $\lambda$, we get that
 \begin{align*}
 \mu(\cD_{\ell} \cap F_{j}) & = \sum_{B_{i} \in \cG} \mu(\cD_{\ell} \cap F_{j} \cap B_{i})
 \; \leq \;\lambda \sum_{B_{i} \in \cG} \mu(B_{i}) \\
 & \leq \lambda \; \mu\left(\bigcup_{B_{i} \in \cG} B_{i}\right)
 \; \leq \;\lambda \; \mu(A)
 \; \stackrel{\eqref{cond1}}{ \leq }\;\lambda\; \big(\mu(\cD_{\ell} \cap F_{j})+\varepsilon_{1}\big)\, .
 \end{align*}
Therefore,
 \begin{equation*}
 \lambda \geq \frac{\mu(\cD_{\ell} \cap F_{j})}{\mu(\cD_{\ell} \cap F_{j})+ \varepsilon_{1}}\,.
 \end{equation*}
Hence, by choosing $\varepsilon_{1}$ small enough, we can guarantee that $\lambda>1-c_{2}$. Hence, by the definition of $\lambda$, there exists $B_{i_{0}} \in \cG$ such that $\mu(B_{i_{0}})>0$ and
 \begin{equation} \label{density_statement}
 \frac{\mu(\cD_{\ell} \cap F_{j} \cap B_{i_{0}})}{\mu(B_{i_{0}})} >1-c_{2}.
 \end{equation}
 However, by the construction of $\cD_{\ell}$, we have that
 \begin{equation}\label{cond3}
 \cD_{\ell} \cap F_{j} \cap U_{i} = \varnothing
 \end{equation}
 for any $i\geq \ell$. Then, on recalling that $B_{i_{0}} \in \cG \subset \cF$ and so $i_{0} \geq \ell$, by \eqref{cond3}, we get that
 \begin{align*}
 \mu(B_{i_{0}}) & \geq \mu(U_{i_{0}} \cap B_{i_{0}}) + \mu(\cD_{\ell} \cap F_{j} \cap B_{i_{0}}) \\
  & \overset{\eqref{c_{2}}}{\geq} c_{2} \;\mu(B_{i_{0}})+ \mu(\cD_{\ell} \cap F_{j} \cap B_{i_{0}}),
  \end{align*}
  and so we conclude that
\begin{equation*}
  \frac{\mu(\cD_{\ell} \cap F_{j} \cap B_{i_{0}})}{\mu(B_{i_{0}})} \leq 1-c_{2}
\end{equation*}
contradicting \eqref{density_statement}. Thus we must have $\mu(\cD_{\ell} \cap F_{j})=0$. Since this is true for each $j \in \N$, by \eqref{cond0}, we conclude that $\mu(\cD_{\ell})=0$, and so $\mu(\cD)=0$.
  \end{proof}

We note below two obvious but useful corollaries.

\begin{corollary}\label{CI_balls_corollary}
Let $(X, d, \mu)$ be a metric measure space equipped with a $\sigma$-finite Borel regular measure $\mu$. Let $(B_{i})_{i \in \N}$ be a sequence of balls in $X$ with $r(B_{i}) \to 0$ as $i \to \infty$. Let $0<c<C$ be given. Suppose that there exist $C'>C$ such that for $(\delta,\rho)= (C,5C')$ and $(\delta,\rho)= (c,C')$ there exists $c(\delta,\rho)>0$ satisfying
\begin{equation} \label{c(delta,rho)}
\mu(\delta B_{i})\geq c(\delta, \rho)\;\mu(\rho B_{i})\qquad\text{for all $i\in\N$}\, .
\end{equation}
Then
\begin{equation}\label{vb77}
\mu\Big(\limsup_{i \to \infty}CB_{i}\;\setminus\;\limsup_{i \to \infty}c B_{i}\Big)=0\,.
\end{equation}
\end{corollary}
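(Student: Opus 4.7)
The plan is to deduce Corollary \ref{CI_balls_corollary} directly from Lemma \ref{CI_balls} by taking $B_i' := CB_i$ as the sequence of balls to which the lemma is applied, and $U_i := cB_i$ as the designated subsets. Since $c<C$, we have $U_i\subset B_i'$, and since $r(B_i)\to 0$ we also have $r(B_i')=C\,r(B_i)\to 0$, as required. If the balls in the hypothesis are open rather than closed, I would replace each $B_i'$ and $U_i$ by its closure; this only enlarges the limsup sets by a subset of the boundary, which does not affect the argument, and it is harmless because all the measure inequalities we need already hold for concentric scalings.

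The two measure inequalities required by Lemma \ref{CI_balls} are checked from the hypothesis \eqref{c(delta,rho)} by using the trivial monotonicity $\mu(\rho_1 B_i)\le\mu(\rho_2 B_i)$ for $\rho_1\le\rho_2$, which is valid because concentric balls are nested. For \eqref{c_{1}}, apply \eqref{c(delta,rho)} with $(\delta,\rho)=(C,5C')$ to obtain
\begin{equation*}
\mu(B_i')=\mu(CB_i)\ \ge\ c(C,5C')\,\mu(5C'B_i)\ \ge\ c(C,5C')\,\mu(5CB_i)=c(C,5C')\,\mu(5B_i'),
\end{equation*}
where the second inequality uses $5C<5C'$. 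For \eqref{c_{2}}, apply \eqref{c(delta,rho)} with $(\delta,\rho)=(c,C')$ to get
\begin{equation*}
\mu(U_i)=\mu(cB_i)\ \ge\ c(c,C')\,\mu(C'B_i)\ \ge\ c(c,C')\,\mu(CB_i)=c(c,C')\,\mu(B_i'),
\end{equation*}
using $C<C'$.

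With both doubling-type hypotheses verified with constants $c_1=c(C,5C')$ and $c_2=c(c,C')$, Lemma \ref{CI_balls} yields
\begin{equation*}
\mu\Big(\limsup_{i\to\infty}B_i'\,\setminus\,\limsup_{i\to\infty}U_i\Big)=\mu\Big(\limsup_{i\to\infty}CB_i\,\setminus\,\limsup_{i\to\infty}cB_i\Big)=0,
\end{equation*}
which is exactly \eqref{vb77}. There is essentially no obstacle here: the corollary is a packaging of Lemma \ref{CI_balls} in which the "big" ball of the lemma is the $C$-scaling and the "small" subset is the $c$-scaling, with the intermediate radius $C'$ used to absorb the unavoidable factor of $5$ from the covering lemma. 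The only point to be mildly careful about is ensuring that each of the two concentric-inclusion steps goes in the right direction, which dictates precisely the choice of the pairs $(C,5C')$ and $(c,C')$ specified in the hypothesis.
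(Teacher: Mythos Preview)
Your approach is essentially identical to the paper's: apply Lemma~\ref{CI_balls} with the $C$-scaled (closures of the) balls playing the role of the lemma's $B_i$, and $U_i=cB_i$, verifying \eqref{c_{1}} and \eqref{c_{2}} via monotonicity of measure on nested concentric balls together with the two instances of \eqref{c(delta,rho)}. One small imprecision worth tightening: do \emph{not} take the closure of $U_i$. Lemma~\ref{CI_balls} only requires the $U_i$ to be $\mu$-measurable subsets of the closed balls, so $U_i=cB_i$ (open or closed) is already fine; if you close $U_i$ as well, the resulting statement $\mu(\limsup \overline{CB_i}\setminus\limsup\overline{cB_i})=0$ does not immediately imply \eqref{vb77}, since set differences are not monotone in both arguments. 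The paper simply closes the outer balls, proves $\mu(\limsup C\overline{B_i}\setminus\limsup cB_i)=0$, and then uses $CB_i\subset C\overline{B_i}$ to conclude.
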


\begin{proof}
Let $\overline B_i$ be the closure of $B_i$. (N.B. the balls $B_i$ could be either open or closed.) Note that $C B_{i}\subset C \overline B_{i}$ and hence, \eqref{vb77} will follow if we can show that
\begin{equation}\label{vb77+}
\mu\Big(\limsup_{i \to \infty}C\overline B_{i}\;\setminus\;\limsup_{i \to \infty}c B_{i}\Big)=0\,.
\end{equation}
To deduce the latter we will use Lemma~\ref{CI_balls}. Since $C\overline B_i\subset C'B_i$, taking $(\delta,\rho)= (C,5C')$ and $c_{1}=c(\delta,\rho)^{-1}$ in \eqref{c(delta,rho)} gives \eqref{c_{1}} in relation to $C\overline B_i$, that is
$\mu(C\overline B_{i}) \geq c_{1}\,\mu(5C\overline B_{i})$ for all $i\in\N$.
Further, taking $(\delta,\rho)= (c,C')$ and $c_{2}=c(\delta,\rho)$ in \eqref{c(delta,rho)} verifies \eqref{c_{2}} in relation to $C\overline B_i$, in which $U_i=cB_i$. Thus, by Lemma~\ref{CI_balls}, \eqref{vb77+} holds and we are done.
\end{proof}

\begin{corollary}\label{CI_balls_corollary_doubling}
Let $(X, d, \mu)$ be a metric measure space equipped with a $\sigma$-finite doubling Borel regular measure $\mu$. Let $(B_{i})_{i \in \N}$ be a sequence of balls in $X$ centred at $\supp \mu$ with $r(B_{i}) \to 0$ as $i \to \infty$. Then  \eqref{vb77} holds for any $0<c<C$.
\end{corollary}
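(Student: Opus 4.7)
My plan is to deduce Corollary~\ref{CI_balls_corollary_doubling} directly from Corollary~\ref{CI_balls_corollary} by showing that the doubling hypothesis on $\mu$ automatically supplies the required inequality \eqref{c(delta,rho)} for balls with sufficiently small radii. The key observation is that \eqref{c(delta,rho)} is a statement about comparing the $\mu$-measure of concentric scalings of a fixed ball, which is precisely what a doubling measure controls.

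First I would fix any $C' > C$ (for concreteness, $C' = 2C$). Given that $\mu$ is doubling with constant $\lambda$ on balls centred at $\supp\mu$ of radius less than $r_0$, a routine iteration shows that for any $s > 1$ and any such ball $B = B(x,r)$ with $2^k r < r_0$, taking $k = \lceil \log_2 s \rceil$ yields $sB \subseteq 2^k B$ and hence
\begin{equation*}
\mu(sB) \;\le\; \mu(2^k B) \;\le\; \lambda^k\, \mu(B)\,.
\end{equation*}
Applying this with $B = CB_i$ and $s = 5C'/C$, and separately with $B = cB_i$ and $s = C'/c$, produces explicit constants $c(C,5C')$ and $c(c,C')$ for which \eqref{c(delta,rho)} holds --- provided $r(B_i)$ is small enough that all intermediate scalings stay within radius $r_0$. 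Since the balls $B_i$ are centred at $\supp\mu$ by hypothesis, all the dilated balls remain centred at $\supp\mu$, so the doubling condition genuinely applies.

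Next I would handle the fact that \eqref{c(delta,rho)} in Corollary~\ref{CI_balls_corollary} is required for every $i \in \N$, not merely for sufficiently large $i$. Because $r(B_i) \to 0$, there exists $N \in \N$ such that $5C' \cdot 2^{k_0}\, r(B_i) < r_0$ for all $i \ge N$, where $k_0$ is large enough to cover both scale ratios above. I would then replace the original sequence by its tail $(B_i)_{i \ge N}$; this leaves both limsup sets $\limsup_{i\to\infty} CB_i$ and $\limsup_{i\to\infty} cB_i$ unchanged, and the doubling bounds now hold for every index in the truncated sequence. Applying Corollary~\ref{CI_balls_corollary} to this tail then delivers \eqref{vb77} for arbitrary $0 < c < C$.

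No real obstacle arises here; the proof is essentially bookkeeping to translate ``doubling'' into the two-sided comparison required by the previous corollary. The only minor subtlety --- that the doubling inequality is only assumed for radii below $r_0$ --- is disposed of by discarding the finitely many $B_i$ whose enlargements are too big, which is harmless because $\limsup$ sets only depend on tails of the sequence.
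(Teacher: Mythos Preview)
Your proposal is correct and follows essentially the same route as the paper: reduce to Corollary~\ref{CI_balls_corollary} by using the doubling property (iterated) to verify \eqref{c(delta,rho)} for the two required pairs $(\delta,\rho)$. Your explicit tail argument to accommodate the radius threshold $r_0$ in the doubling definition is a detail the paper's proof leaves implicit, but it is the right way to make the step rigorous.
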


\begin{proof}
This immediately follows from Corollary~\ref{CI_balls_corollary}. Indeed, since the centres of the balls $B_i$ are in $\supp \mu$, for any $0<\delta\le\rho$ the doubling property of $\mu$ verifies \eqref{c(delta,rho)} for some $c(\delta,\rho)>0$ depending on $\delta$ and $\rho$ only.
\end{proof}

\medskip

\subsection{Constant invariant limsup sets: the general case}

The following lemma provides an analogue of \cite[Lemma 4]{BV08} for limsup sets of neighborhoods of sets rather than limsup sets of balls (neighborhoods of singletons). Case~(i) of Lemma~\ref{CI_general} below is a generalisation of Corollary~\ref{CI_balls_corollary_doubling}, Case~(ii) of Lemma~\ref{CI_general} is a generalisation of Corollary~\ref{CI_balls_corollary}. Most of the proof follows that of \cite[Lemma 4]{BV08} where the main difference being essentially that Lemma~\ref{CI_balls} is used in place of \cite[Lemma 1]{BV08}. However, there are also additional technicalities arising from the fact that the support of $\mu$ may not be the whole metric space.

\begin{lemma} \label{CI_general}
Let $(X,d,\mu)$ be a metric measure space equipped with a $\sigma$-finite Borel regular measure $\mu$. Let  $(S_{i})_{i \in \N}$ be a sequence of subsets in $X$. Let $(\delta_{i})_{i \in \N}$ be a sequence of positive real numbers such that $\delta_{i} \to 0$ as $i \to \infty$, and let
\begin{equation*}
\Delta(S_{i}, \delta_{i}):=\{ \bx \in X: \dist(S_{i},\bx)< \delta_{i} \},
\end{equation*}
where $\dist(S_{i},\bx):=\inf \{ d(s, \bx) :s \in S_{i} \}$. Let $0<c<C$ be given. Assume either of the following:
\begin{itemize}
\item[{\rm(i)}] $\mu$ is doubling and $S_{i} \subset\supp \mu$ for each $i \in \N$;\\[-1ex]
\item[{\rm(ii)}]
there exist $C'>C$ such that for $(\delta,\rho)= (5c+C,25c+5C')$ and $(\delta,\rho)= (c,5c+C')$ there exists $c(\delta,\rho)>0$ such that for every $i \in \N$ and every $x \in S_{i}$ we have that
\begin{equation*}
\mu(\delta B(x,\delta_{i})) \geq c(\delta,\rho) \mu(\rho B(x, \delta_{i})).
\end{equation*}
\end{itemize}
Then
\begin{equation*}
\mu\left( \limsup_{i \to \infty} \Delta(S_{i}, C\delta_{i}) \; \setminus \; \limsup_{i \to \infty} \Delta(S_{i}, c\delta_{i}) \right)=0\,.
\end{equation*}
\end{lemma}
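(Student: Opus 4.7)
The strategy is to reduce Lemma~\ref{CI_general} to the ball statements (Corollary~\ref{CI_balls_corollary} and Corollary~\ref{CI_balls_corollary_doubling}) through a $5r$-covering argument, mirroring the reduction in \cite[Lemma~4]{BV08} but with Lemma~\ref{CI_balls} replacing the corresponding ball lemma used there.

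First, for each $i\in\N$ the plan is to apply the $5r$-covering lemma to the uniformly bounded family $\{B(s,c\delta_i):s\in S_i\}$ to extract a pairwise disjoint subfamily $\{B(t,c\delta_i):t\in T_i\}$ with $T_i\subset S_i$ such that every $s\in S_i$ lies in $B(t,5c\delta_i)$ for some $t\in T_i$. This yields the sandwich inclusions
$$
\bigcup_{t\in T_i}B(t,c\delta_i)\;\subseteq\;\Delta(S_i,c\delta_i),\qquad \Delta(S_i,C\delta_i)\;\subseteq\;\bigcup_{t\in T_i}B(t,(5c+C)\delta_i),
$$
the first because $T_i\subset S_i$, and the second by the triangle inequality: $d(x,s)<C\delta_i$ and $d(s,t)<5c\delta_i$ give $d(x,t)<(5c+C)\delta_i$. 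After reindexing the countable collection $\{B(t,\delta_i):t\in T_i,\ i\in\N\}$ as a sequence $(B_k)_{k\in\N}$ with radii $r(B_k)=\delta_{i(k)}\to0$, the two inclusions combine to give
$$
\limsup_{i\to\infty}\Delta(S_i,C\delta_i)\;\setminus\;\limsup_{i\to\infty}\Delta(S_i,c\delta_i)\;\subseteq\;\limsup_{k\to\infty}(5c+C)B_k\;\setminus\;\limsup_{k\to\infty}cB_k\,.
$$

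It remains to show that the right-hand side is $\mu$-null. In case (i) the centres $t\in T_i\subset S_i\subset\supp\mu$ and $\mu$ is doubling, so Corollary~\ref{CI_balls_corollary_doubling} applies directly. In case (ii) I would invoke Corollary~\ref{CI_balls_corollary} with parameters $c_{\mathrm{new}}=c$, $C_{\mathrm{new}}=5c+C$, $C'_{\mathrm{new}}=5c+C'$. Since $C'>C$ gives $C'_{\mathrm{new}}>C_{\mathrm{new}}$, the two doubling-type inequalities required for $(\delta,\rho)=(5c+C,\,5(5c+C'))=(5c+C,\,25c+5C')$ and $(\delta,\rho)=(c,\,5c+C')$ are precisely those assumed in hypothesis (ii), evaluated at the centres $x=t\in T_i\subset S_i$; this is exactly the reason the multipliers $5c+C$ and $5c+C'$ appear in the statement.

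The main obstacle will be the technical point that $S_i$ is an arbitrary subset of $X$, so the disjoint family $T_i$ produced by the $5r$-covering lemma need not be countable, and a direct reindexing as $(B_k)_{k\in\N}$ is not automatic. This is where $\sigma$-finiteness of $\mu$ and, in case (i), the support hypothesis enter the argument: using a $\sigma$-finite decomposition $X=\bigcup_j F_j$ with $\mu(F_j)<\infty$, within each $F_j$ only countably many of the disjoint balls $B(t,c\delta_i)$ can carry positive $\mu$-measure, while those of $\mu$-measure zero contribute nothing to the relevant limsup sets. This localisation, together with outer regularity, mirrors the mechanism already present in the proof of Lemma~\ref{CI_balls} and accounts for the ``additional technicalities'' beyond the argument of \cite[Lemma~4]{BV08}.
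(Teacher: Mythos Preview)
Your proposal is correct and follows essentially the same route as the paper's proof: extract a disjoint subfamily via the $5r$-covering lemma, obtain the sandwich inclusions with dilation factor $\kappa=(5c+C)/c$, reduce to a countable family of balls by discarding those of zero measure (using $\sigma$-finiteness, and intersecting with $\supp\mu$), and then appeal to Corollary~\ref{CI_balls_corollary_doubling} in case~(i) and Corollary~\ref{CI_balls_corollary} with the parameters $(c,\,5c+C,\,5c+C')$ in case~(ii). The only cosmetic difference is that the paper discards balls with $\mu(\kappa B)=0$ rather than $\mu(B)=0$; these coincide in both cases (in~(i) because centres lie in $\supp\mu$, in~(ii) by the hypothesis with $(\delta,\rho)=(c,5c+C')$), and your mention of outer regularity is unnecessary here since it is already absorbed into Lemma~\ref{CI_balls}.
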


\begin{proof}
For ease of notation let
\begin{equation*}
A:=\limsup_{i \to \infty} \Delta(S_{i},c\delta_{i}), \quad U:=\limsup_{i \to \infty}\Delta(S_{i},C\delta_{i}).
\end{equation*}
The sets $\Delta(S_{i}, \delta)$ are open and therefore $A$ and $U$ are Borel sets and hence they are $\mu$-measurable. Observe that
\begin{equation} \label{lemma_eq1}
A \subseteq U.
\end{equation}
For each $i$ let $\cG_{i}$ be the collection of open balls $\{B(x, c\delta_{i}):x \in S_{i}\}$. Observe that in Case~(i) $S_{i} \subset \supp \mu$, and so $\cG_{i}$ is a collection of balls with centres in $\supp \mu$. Also $\cG_{i}$ is a $c\delta_{i}$-cover of $\Delta(S_{i}, c\delta_{i})$ and so for any $s \in \Delta(S_{i}, c\delta_{i})$ there exists an $x \in S_{i}$ such that $s \in B(x,c\delta_{i}) \in \cG_{i}$. By the $5r$-covering lemma, there exists a disjoint subcollection of balls, say $\cG'_{i}$, such that
\begin{equation} \label{lemma_eq2}
\bigcup_{B \in \cG'_{i}}5B \supset \bigcup_{B \in \cG_{i}}B=\Delta(S_{i}, c\delta_{i})\supset \bigcup_{B \in \cG'_{i}}B.
\end{equation}

Suppose that $z \in \Delta(S_{i},C\delta_{i})$, then there exists a $y \in S_{i}$ such that $z \in B(y, C\delta_{i})$. Furthermore, by \eqref{lemma_eq2}, there exists a ball $B(x, c\delta_{i}) \in \cG'_{i}$ such that $y \in B(x, 5c\delta_{i})$. By the triangle inequality, we have that $z \in B(x, (5c+C)\delta_{i})$, and so
 \begin{equation} \label{lemma_eq3}
 \Delta(S_{i}, C\delta_{i}) \subseteq \bigcup_{B \in \cG'_{i}}\kappa B\,\qquad\text{where }\kappa=(5c+C)/c\ge6\,.
 \end{equation}
  Hence
  \begin{equation}\label{vb99}
  \limsup_{i \to \infty} \bigcup_{B \in \cG'_{i}}B \overset{\eqref{lemma_eq2}}{\subseteq} A \overset{\eqref{lemma_eq1}}{\subseteq} U \overset{\eqref{lemma_eq3}}{\subseteq} \limsup_{i \to \infty} \bigcup_{B \in \cG'_{i}} \kappa B.
  \end{equation}
Let $\cG''_{i}$ be the subcollection of $\cG'_{i}$ which consists of balls $B\in \cG'_{i}$ with $\mu(\kappa B)>0$. Clearly, the balls in $\cG''_{i}$ are also disjoint. Since the balls $B$ are open, $\kappa B\cap\supp\mu=\varnothing$ if $\mu(\kappa B)=0$. Hence,
$$
\bigcup_{B \in \cG'_{i}}B \cap\supp\mu=\bigcup_{B \in \cG''_{i}}B\cap\supp\mu
$$
and
$$
\bigcup_{B \in \cG'_{i}}\kappa B \cap\supp\mu=\bigcup_{B \in \cG''_{i}}\kappa B\cap\supp\mu\,.
$$
Since $\mu$ is $\sigma$-finite, $\cG''_i$ is disjoint and every ball in $\cG''_i$ has positive $\mu$-measure, the collection $\cG''_i$ is countable. Hence $\cG'':=\bigcup_{i\in\N}\cG''_i$ is a countable collection of balls, say $(\tilde B_i)_{i\in\N}$. Since $\delta_{i} \to 0$ as $i \to \infty$, any ball $B$ can appear in at most a finite number of collections $\cG''_i$. Hence,
$$
\limsup_{i \to \infty} \bigcup_{B \in \cG'_{i}}B \cap\supp\mu=\limsup_{i\to\infty}\tilde B_i\cap\supp\mu
$$
and
$$
\limsup_{i \to \infty} \bigcup_{B \in \cG'_{i}}\kappa B \cap\supp\mu=\limsup_{i\to\infty}\kappa \tilde B_i\cap\supp\mu\,.
$$
Hence, by \eqref{vb99}, we have that
$$
(U\setminus A)\cap\supp\mu\subset (\limsup_{i\to\infty}\kappa \tilde B_i\setminus \limsup_{i\to\infty}\tilde B_i)\cap\supp\mu
$$
and the required result, that is $\mu(U\setminus A)=0$, will follow if we can show that
\begin{equation}\label{vb23}
\mu(\limsup_{i\to\infty}\kappa \tilde B_i\setminus \limsup_{i\to\infty}\tilde B_i)=0\,.
\end{equation}
In Case~(i), by construction, all the balls $\tilde B_i$ are centred at $\supp\mu$ and the measure $\mu$ is doubling. Hence, \eqref{vb23} follows on applying Corollary~\ref{CI_balls_corollary_doubling}. In Case~(ii), \eqref{vb23} follows on applying Corollary~\ref{CI_balls_corollary}.
\end{proof}

\bigskip

In \cite{BLW21b} Fubini's Theorem is applied to \cite[Lemma 4.1]{BLW21b} to obtain an $n$-dimensional result on $\limsup$ sets of `rectangles' (see \cite[Lemma 4.2]{BLW21b} for more details). We follow the same methodology to extend Lemma~\ref{CI_general} to the case of a product space.

\begin{lemma} \label{CI_product}
Let $n\in\N$. For each $1\le j\le n$ let $(X_j,d_{j},\mu_j)$ be a metric measure space equipped with a $\sigma$-finite doubling Borel regular measure $\mu_{j}$. Let $X=\prod_{j=1}^{n}X_{j}$ be the corresponding product space, $d=\max_{1 \leq j \leq n}d_{j}$ be the corresponding metric, and $\mu=\prod_{j=1}^{n}\mu_{j}$ be the corresponding product measure. Let $(S_i)_{i\in\N}$ be a sequence of subsets of $\supp \mu$ and $(\bm\delta_{i})_{i \in \N}$ be a sequence of positive $n$-tuples $\bm\delta_{i}=(\delta^{(1)}_{i},\dots,\delta^{(n)}_{i})$ such that $\delta^{(j)}_{i} \to 0$ as $i \to \infty$ for each $1 \leq j \leq n$. Let
\begin{equation*}
\Delta_n(S_{i}, \bm\delta_{i})=\{\bx \in X: \, \exists \, \ba \in S_{i} \, \, \, d_{j}(a_{j}, x_{j}) < \delta_{i}^{(j)} \, \, \forall \, \, 1 \leq j \leq n\}\,,
\end{equation*}
where $\bx=(x_1,\dots,x_n)$ and $\ba=(a_1,\dots,a_n)$.
Then, for any $\bm C=(C_{1},\dots , C_{n})$ and $\bm c=(c_{1},\dots , c_{n})$ with $0<c_j\le C_{j}$ for each $1 \leq j \leq n$
\begin{equation}\label{vb25}
\mu\left( \limsup_{i \to \infty} \Delta_n(S_{i}, \bm C\bm \delta_{i}) \;\setminus\;\limsup_{i \to \infty} \Delta_n(S_{i}, \bm c\bm\delta_{i}) \right)=0\,,
\end{equation}
where $\bm c\bm\delta_j=(c_1\delta^{(1)}_i,\dots,c_n\delta^{(n)}_i)$ and similarly $\bm C\bm\delta_j=(C_1\delta^{(1)}_i,\dots,C_n\delta^{(n)}_i)$.
\end{lemma}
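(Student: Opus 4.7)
The plan is to iterate Lemma~\ref{CI_general}(i) one coordinate at a time using Fubini's theorem, following the methodology of \cite[Lemma~4.2]{BLW21b}. Concretely, I would prove the following one-coordinate reduction: for any $n$-tuple $\bm D=(D_1,\dots,D_n)$ with $c_j\le D_j\le C_j$ and any fixed index $1\le k\le n$, replacing the $k$-th component of $\bm D$ by $c_k$ alters the corresponding limsup set only by a $\mu$-null set. Applying this successively for $k=1,\dots,n$ produces a decreasing chain of limsup sets going from $\limsup_i\Delta_n(S_i,\bm C\bm\delta_i)$ down to $\limsup_i\Delta_n(S_i,\bm c\bm\delta_i)$ in which each consecutive pair differs by a null set, and countable subadditivity of $\mu$ then yields \eqref{vb25}.

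For the one-coordinate reduction I may assume $k=n$ by relabelling. The key move is to slice by the first $n-1$ coordinates. For each $\bx^{(n-1)}=(x_1,\dots,x_{n-1})\in\prod_{j<n}X_j$, I would introduce the auxiliary set
$$S_i^{\bx^{(n-1)}}:=\{a_n\in X_n:\exists\,(a_1,\dots,a_n)\in S_i\text{ with }d_j(a_j,x_j)<D_j\delta_i^{(j)}\text{ for }j<n\}\subset X_n.$$
Unpacking definitions shows that $(\bx^{(n-1)},x_n)\in\Delta_n(S_i,\bm D\bm\delta_i)$ precisely when $x_n\in\Delta(S_i^{\bx^{(n-1)}},D_n\delta_i^{(n)})$ in the one-dimensional sense used in Lemma~\ref{CI_general}; the same equivalence holds with $D_n$ replaced by $c_n$, since the definition of $S_i^{\bx^{(n-1)}}$ does not involve the $n$-th coordinate of $\bm D$. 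Because taking fibers commutes with $\limsup$, the $\bx^{(n-1)}$-fiber of the difference set is exactly
$$\limsup_{i\to\infty}\Delta(S_i^{\bx^{(n-1)}},D_n\delta_i^{(n)})\;\setminus\;\limsup_{i\to\infty}\Delta(S_i^{\bx^{(n-1)}},c_n\delta_i^{(n)}).$$
I would then verify the (standard) factorisation $\supp\mu=\prod_{j=1}^n\supp\mu_j$, valid since each $\mu_j$ is $\sigma$-finite Borel regular; combined with $S_i\subset\supp\mu$, this gives $S_i^{\bx^{(n-1)}}\subset\supp\mu_n$. Since $\mu_n$ is doubling and $\delta_i^{(n)}\to 0$, Lemma~\ref{CI_general}(i) applies in $(X_n,d_n,\mu_n)$ and forces this fiber to be $\mu_n$-null for every $\bx^{(n-1)}$.

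To finish the reduction I would note that each $\Delta_n(S_i,\bm\delta_i)$ is open in the product metric, so limsups of these are Borel and the difference set in the claim is $\mu$-measurable. Fubini's theorem, applicable since $\mu$ is a product of $\sigma$-finite Borel regular measures, converts the vanishing of every fiber measure into the vanishing of the total measure:
$$\mu\bigl(\limsup_{i\to\infty}\Delta_n(S_i,\bm D\bm\delta_i)\setminus\limsup_{i\to\infty}\Delta_n(S_i,\bm D'\bm\delta_i)\bigr)=\int_{\prod_{j<n}X_j}0\,d(\mu_1\times\cdots\times\mu_{n-1})=0,$$
where $\bm D'$ agrees with $\bm D$ except in the last coordinate, which takes value $c_n$.

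The principal (but ultimately modest) obstacle is bookkeeping: one must carefully identify the fiber of a limsup of product-type neighborhoods with a one-dimensional limsup of the exact form covered by Lemma~\ref{CI_general}, and establish that $\supp\mu=\prod_j\supp\mu_j$ so that hypothesis~(i) (rather than the fussier hypothesis~(ii)) of Lemma~\ref{CI_general} is available at each step. Once these points are in place, the result follows from the iterated Fubini argument outlined above.
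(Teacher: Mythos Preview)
Your proposal is correct and follows essentially the same approach as the paper: both iterate coordinate-by-coordinate, identifying the fiber of the difference set with a one-dimensional limsup set to which Lemma~\ref{CI_general}(i) applies (using $S_i\subset\supp\mu$ to get the sliced sets into $\supp\mu_k$), and then integrate via Fubini. Your version is in fact slightly cleaner, since you work directly with null differences rather than first reducing to a finite measure and tracking equality of measures, and you avoid the paper's auxiliary index set $I_y$ by observing that fibers commute with $\limsup$ outright.
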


\begin{proof}
Without loss of generality we will assume that $\bm c=(1,\dots,1)$ as otherwise we can replace $\bm\delta_i$ with $\bm c\bm\delta_i$. Also, since $\mu$ is $\sigma$-finite, without loss of generality we can assume it is finite, and then \eqref{vb25} is equivalent to
\begin{equation}\label{vb26}
\mu\left( \limsup_{i \to \infty} \Delta_n(S_{i}, \bm C\bm \delta_{i})\right)=\mu\left(\limsup_{i \to \infty} \Delta_n(S_{i}, \bm c\bm\delta_{i}) \right)\,.
\end{equation}
Let us begin by showing \eqref{vb26} when $\bm C=(C_{1},1 , \dots , 1)$.
Note that the general case of Lemma~\ref{CI_product} will follow inductively.
For ease of notation let
\begin{equation*}
\hat{\mu}=\prod_{j=2}^{n}\mu_{j},
 \quad \widehat{X}=\prod_{j=2}^{n}X_{j}.
\end{equation*}
For any $y \in \widehat{X}$ let
\begin{equation*}
I_{y}= \{ i\in\N: \, \exists \, \, x \in X_{1} \;\;\text{such that }\, \, (x,y) \in \Delta_n(S_{i},\delta_{i})\},
\end{equation*}
and for any $F \subseteq X$ let $F_y$ denote the fiber of $F$ at $y$, that is
\begin{equation*}
F_y=\{ x: (x,y) \in F\} \subseteq X_{1}.
\end{equation*}
Observe that
\begin{equation}\label{eqn15}
A_y:=\left( \limsup_{i \to \infty} \Delta_n(S_{i}, \bm\delta_{i}) \right)_{y}=\underset{i \in I_y}{\limsup_{i \to \infty}}\Delta_n(S_{i}, \bm\delta_{i})_{y} =:D(y).
\end{equation}
Indeed, if $x \in A_y$ then it implies there exists an infinite sequence $\{i_{k}\}$ such that
\begin{equation*}
(x,y) \in \Delta_n(S_{i_{k}}, \bm\delta_{i_{k}}) \quad \text{ for all } i_k.
\end{equation*}
Hence $\{i_k\} \subseteq I_y$ and so $x \in D(y)$.

Conversely, if $x \in D(y)$ then $D(y)$ is non-empty and so $I_y$ must be infinite. By the definition of $I_y$ and the fact that $x \in D(y)$, we have that $x \in \Delta_n(S_{i}, \bm\delta_{i})_{y}$ for infinitely many $i \in I_y$, and so $x \in A_y$. \par

Similarly, we have that
\begin{equation}\label{eqn16}
\left( \limsup_{i \to \infty} \Delta_n(S_{i},(C_{1}, 1, \dots , 1)\bm\delta_{i}) \right)_{y}=\underset{i \in I_y}{\limsup_{i \to \infty}}\Delta_n(S_{i}, (C_{1},1, \dots,1)\bm\delta_{i})_{y}.
\end{equation}

We are going to apply Lemma~\ref{CI_general}(i) to $D(y)$ with measure $\mu_{1}$. For each $y \in \supp \widehat{\mu}$ and $i \in I_{y}$ let
\begin{equation*}
S_{i}(y):=\Big\{a_{1} : \, (a_1,y)\in\Delta_{n}(\ba, \bm\delta_{i})\text{ for  some }\ba=(a_{1}, \dots , a_{n}) \in S_{i} \Big\}\, .
\end{equation*}
Thus, $S_{i}(y)$ is the projection onto $X_1$ of the part of $S_i$ which is appropriately close to the fibre $X_y=\{(x_1,y):x_1\in X_1\}$. Observe that $\ba=(a_1,\dots,a_n) \in S_{i}$ implies that $a_{1} \in \supp \mu_{1}$ (as $S_{i} \subset \supp \mu$ and $\mu$ is the product measure). Lastly observe that for each $y \in \supp \widehat{\mu}$ and $i \in I_{y}$ we have that $\Delta(S_{i}(y),\delta_{i}^{(1)})=\Delta_{n}(S_{i},\bm\delta_{i})_{y}$ and therefore
\begin{equation}\label{eq:box=rectangle0}
D(y)= \underset{i \in I_y}{\limsup_{i \to \infty}}\Delta(S_{i}(y),\delta_{i}^{(1)})\,.
\end{equation}
Similarly $\Delta(S_{i}(y), C_{1}\delta_{i}^{(1)})= \Delta_n(S_{i}, (C_{1},1, \dots,1)\bm\delta_{i})_{y}$ and therefore
\begin{equation} \label{eq:box=rectangle}
\underset{i \in I_y}{\limsup_{i \to \infty}}\Delta_n(S_{i}, (C_{1},1, \dots,1)\bm\delta_{i})_{y} =\underset{i \in I_y}{\limsup_{i \to \infty}}\Delta(S_{i}(y), C_{1}\delta_{i}^{(1)}).
\end{equation}
By Lemma~\eqref{CI_general}(i), \eqref{eq:box=rectangle0} and \eqref{eq:box=rectangle}, we get that
\begin{equation}\label{vb31}
\mu_1\Big(\underset{i \in I_y}{\limsup_{i \to \infty}}\Delta_n(S_{i}, (C_{1},1, \dots,1)\bm\delta_{i})_{y}\;\setminus D(y)\Big)=0\,.
\end{equation}

Applying Fubini's Theorem we have that
\begin{align*}
\mu \left( \limsup_{i \to \infty} \Delta_n(S_{i},\bm\delta_{i}) \right) & = \int_{\widehat{X}} \mu_{1} \left( A_y \right) d \hat{\mu} \\
& \stackrel{\eqref{eqn15}}{=} \int_{\widehat{X}} \mu_{1} \left( D(y) \right) d \hat{\mu}
\end{align*}
\begin{align*}
&\overset{\eqref{vb31}}{=} \int_{\widehat{X}} \mu_{1} \left( \underset{i \in I_y}{\limsup_{i \to \infty}}\Delta_n(S_{i}, (C_{1},1, \dots,1)\bm\delta_{i})_{y}  \right) d \hat{\mu} \\
& \stackrel{\eqref{eqn16}}{=} \int_{\widehat{X}} \mu_{1} \left( \left( \limsup_{i \to \infty} \Delta_n(S_{i},(C_{1}, 1, \dots , 1)\bm\delta_{i}) \right)_{y} \right) d \hat{\mu} \\
&=\mu \left( \limsup_{i \to \infty}\Delta_n(S_{i},(C_{1},1 , \dots , 1)\bm\delta_{i}) \right)\,.
\end{align*}
The above argument can be repeated $n-1$ more times, for $\ell=2,\dots,n-1$ each time replacing $(C_{1},C_{2}, \dots , C_{\ell-1},1, \dots , 1)$ by $(C_{1}, \dots , C_{\ell-1}, C_{\ell},1, \dots , 1)$ so that at step $\ell$ we use Fubini's theorem with
$$
\widehat{\mu}=\prod_{i\neq\ell}\mu_i\qquad\text{and}\qquad \widehat{X}=\prod_{i\neq\ell}X_i
$$
and get that
\begin{align*}
\mu \left( \limsup_{i \to \infty} \Delta_n(S_{i},(C_{1}, \dots ,C_{\ell-1}, 1, \dots , 1)\delta_{i}) \right) & =\mu \left( \limsup_{i \to \infty} \Delta_n(S_{i},(C_{1}, \dots, C_{\ell}, 1, \dots , 1)\delta_{i}) \right)\,.
\end{align*}
Putting all these equations for $\ell=1,\dots,n$ together completes the proof.
\end{proof}

\smallskip

\begin{remark}
Observe that the above technique cannot produce a version of Lemma~\ref{CI_product} similar to Lemma~\ref{CI_general}(ii). This is simply because in the use of Fubini's Theorem we consider a $\limsup$ set at $\widehat{\mu}$-almost every fibre, and so the centres of the corresponding set, while in $\supp \mu$, may no longer be contained in $\bigcup_{i} S_{i}$.
\end{remark}

\subsection{Proof of Theorem~\ref{bad_0_general}}

Given the above results the proof of Theorem~\ref{bad_0_general} follows readily on recalling the observations we have made in \S\ref{outline}. In particular, Part (i) of Theorem~\ref{bad_0_general} follows from Lemma~\ref{CI_product}, while Part~(ii) of Theorem~\ref{bad_0_general} follows from Lemma~\ref{CI_general}(ii).

\section{Application I: Classical setting}\label{applications}

For the rest of the paper we prove a variety of results on the measure of badly approximable sets. In some cases the result can be obtained by simply applying the results of the previous section to a relevant Dirichlet type result. In other applications it can become more complex. For example, this is the case in simultaneous Diophantine approximation on manifolds, when the approximating points do not lie on the manifold. See \S\ref{S_bad} for more details.

\subsection{$S$-arithmetic Diophantine approximations}\label{S-arith}

Let $S$ be a collection of valuations of $\Q$ with cardinality $k\ge1$. Let $S^{*}=S \backslash \{ \infty \}$. Let $\Qs$ denote the set of $S$-arithmetic points, that is
\begin{equation*}
\Qs:=\prod_{\nu \in S} \Qv,
\end{equation*}
where $\Qv$ is the completion of $\Q$ with respect to the valuation $|\cdot|_{\nu}$. In particular, if $\nu\in S^*$ is a prime number $p$, then $\Q_\nu=\Q_p$ is the field of $p$-adic numbers, and if $\nu=\infty$, then $\Q_\nu=\R$.
Let
\begin{equation*}
\Zs:=\prod_{\nu \in S} \Zv\,,
\end{equation*}
where for $\nu\in S^*$ we have that $\Z_\nu=\{x\in\Q_\nu:|x|_\nu\le1\}$ is the set of $\nu$-adic integers, and $\Z_\infty= [0,1)$. Let $\mu_{S}$ be the $S$-arithmetic Haar measure, normalised by $\mu_{S}(\Zs)=1$. Note that $\mu_{S}$ is simply the product of the measures $\mu_\nu$ over each $\Q_{\nu}$ normalised so that $\mu_\nu(\Z_\nu)=1$ for $\nu\in S^*$ and $\mu_\infty$, i.e.
\begin{equation*}
\mu_{S}=\prod_{\nu \in S} \mu_{\nu}\,,
\end{equation*}
where $\mu_{\infty}=\lambda$, the Lebesgue measure. Similarly, in higher dimensional settings, associated with systems of linear  forms, let $\mu_{S,m\times n}$ denote the Haar measure on $\Qs^{mn}$, normalised by $\mu_{S,m\times n}(\Zs^{mn})=1$. We will denote any $\bx\in\Q_S^{mn}$ as $\bx=(\bx_\nu)$, where $\bx_\nu=(x_{i,j,\nu})\in\Q_\nu^{mn}$, that is $(x_{i,j,\nu})$ is an $m\times n$ matrix over $\Q_\nu$ for each $\nu$.

Let $\Psi=(\psi_{i,\nu})_{1\le i \le n,\nu\in S}$ be a collection of non-negative functions $\psi_{i,\nu}:\Z^{m+n}\to \R$.
Let $W_{S,m,n}(\Psi)$ denote the set of points  $\bx\in \Zs^{mn}$, which will be called $\Psi$-approximable, such that for infinitely many $(\ba,\bb)=(a_1,\dots,a_m,b_1,\dots,b_n) \in \Z^{m+n}$ the following system holds
\begin{equation*}
|a_1x_{i,1,\nu}+\dots+a_mx_{i,m,\nu}-b_{i}|_{\nu} < \psi_{i,\nu}(\ba,\bb) \qquad
(1 \leq i\leq n,\;\nu\in S).
\end{equation*}
Define
\begin{equation*}
\Bad_{S,m,n}(\Psi)=W_{S,m,n}(\Psi)\setminus \bigcap_{\ell\in\N}W_{S,m,n}(\tfrac1\ell\Psi)\,.
\end{equation*}

\begin{theorem}\label{thm4.1}
With reference to the above setup, suppose that for all $1\le i\le n$ and $\nu\in S$.
\begin{equation}\label{vb032}
\psi_{i,\nu}(\ba,\bb)\Vert\ba\Vert_\nu^{-1}\to0\qquad\text{as}\quad\Vert(\ba,\bb)\Vert_\infty\to\infty\,.
\end{equation}
Then
$$
\mu_{S,m\times n}\big(\Bad_{S,m,n}(\Psi)\big)=0\,.
$$
\end{theorem}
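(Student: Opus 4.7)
The plan is to apply Theorem~\ref{bad_0_general}(i) after recasting the $S$-arithmetic linear forms problem as a limsup set in the product framework of \S\ref{GS}. Take the ambient space to be $X = \Qs^{mn}$, viewed as the product of $n|S|$ blocks indexed by pairs $(i, \nu)$ with $1 \le i \le n$ and $\nu \in S$; each block $X_{i,\nu} = \Qv^m$ is equipped with the sup-metric $d_{i,\nu}(\by, \by') = \max_{1 \le j \le m} |y_j - y'_j|_\nu$ and the $m$-fold product $\mu_\nu^m$ of the Haar measure on $\Qv$. The resulting product measure $\mu$ on $X$ is $\sigma$-finite, Borel regular, and doubling (each factor $\mu_\nu$ is doubling for its natural metric, and the product of doubling measures with the sup-metric is doubling), and $\supp\mu = X$.

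Index the approximations by $\alpha = (\ba, \bb) \in (\Z^m \setminus \{\mathbf{0}\}) \times \Z^n$, enumerated by increasing $\Vert (\ba, \bb)\Vert_\infty$, and set
\begin{equation*}
R_\alpha = \prod_{(i,\nu)} H_{\ba, b_i, \nu}, \qquad H_{\ba, b_i, \nu} := \bigl\{\by \in \Qv^m : a_1 y_1 + \cdots + a_m y_m = b_i\bigr\},
\end{equation*}
so that $R_\alpha \subset \supp \mu$. Write $\Vert \ba \Vert^*_\nu := \Vert\ba\Vert_1$ when $\nu = \infty$ and $\Vert \ba\Vert^*_\nu := \Vert\ba\Vert_\nu$ otherwise, and take the approximation functions $\phi_{i,\nu}(\alpha) := \psi_{i,\nu}(\ba, \bb)/\Vert\ba\Vert^*_\nu$. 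A direct computation in each local field---using an $\ell^1$-H\"older bound in the Archimedean case and, in the non-Archimedean case, choosing a coordinate $j^*$ with $|a_{j^*}|_\nu = \Vert\ba\Vert_\nu$---yields
\begin{equation*}
d_{i,\nu}(\by, H_{\ba, b_i, \nu}) \;=\; \frac{|a_1 y_1 + \cdots + a_m y_m - b_i|_\nu}{\Vert \ba \Vert^*_\nu}, \qquad \by \in \Qv^m.
\end{equation*}
Thus $\Delta(R_\alpha, \Phi)$ is precisely the set of $\bx \in X$ satisfying $|a_1 x_{i,1,\nu} + \cdots + a_m x_{i,m,\nu} - b_i|_\nu < \psi_{i,\nu}(\ba,\bb)$ for all $(i, \nu)$, whence $\W_{\cR}(\Phi) \cap \Zs^{mn} = W_{S,m,n}(\Psi)$ and $\Bad_{\cR}(\Phi) \cap \Zs^{mn} = \Bad_{S,m,n}(\Psi)$.

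Since $\Vert\ba\Vert_1 \ge \Vert\ba\Vert_\infty$ in the Archimedean case, hypothesis \eqref{vb032} gives $\phi_{i,\nu}(\alpha) \to 0$ as $\alpha \to \infty$ along the chosen enumeration. All hypotheses of Theorem~\ref{bad_0_general}(i) are then in place, yielding $\mu(\Bad_{\cR}(\Phi)) = 0$, and hence $\mu_{S,m\times n}(\Bad_{S,m,n}(\Psi)) = 0$. There is no essential obstacle beyond the distance identity above; the role of \eqref{vb032} is to absorb the subtle scenario where $\Vert\ba\Vert_\nu$ may be small at some non-Archimedean place even though $\Vert(\ba,\bb)\Vert_\infty$ is large, after which the general framework handles the Archimedean and non-Archimedean places simultaneously once the approximation functions are normalised by $\Vert\ba\Vert^*_\nu$.
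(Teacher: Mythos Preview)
Your proof is correct and follows essentially the same route as the paper: recast the linear-forms condition as lying in a $\Phi$-neighborhood of the product of hyperplanes $H_{\ba,b_i,\nu}$, verify the doubling and support hypotheses, and apply Theorem~\ref{bad_0_general}(i). The only cosmetic difference is that you normalise by $\Vert\ba\Vert_1$ at the Archimedean place to obtain an exact distance identity $d_{i,\nu}(\by,H_{\ba,b_i,\nu})=\vert\ba\cdot\by-b_i\vert_\nu/\Vert\ba\Vert^*_\nu$, whereas the paper normalises by $\Vert\ba\Vert_\nu$ uniformly and instead uses a two-sided sandwich $\Delta(R_{\alpha,i,\nu},c_1\phi_{i,\nu})\subset\{\vert\ba\cdot\bx_{i,\nu}-b_i\vert_\nu<\psi_{i,\nu}\}\subset\Delta(R_{\alpha,i,\nu},c_2\phi_{i,\nu})$, concluding via $\Bad_{S,m,n}(\Psi)\subset\Bad_{\cR}(C\Phi)$ for suitable $C$; both arguments are equivalent since Theorem~\ref{bad_0_general}(i) is constant-invariant.
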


\begin{proof}
Consider
$$
X=\prod_{i=1}^n\prod_{\nu\in S}\Z_\nu^m,\qquad \mu=\prod_{i=1}^n\prod_{\nu\in S}\mu_\nu^m,
\qquad R_\alpha=\prod_{i=1}^n\prod_{\nu\in S} R_{\alpha,i,\nu}\,,
$$
where
$$
R_{\alpha,i,\nu}=\left\{ (x_{i,1,\nu},\dots,x_{i,m,\nu})\in \Z_\nu^{m}: a_1x_{i,1,\nu}+\dots+a_m x_{i,m,\nu}-b_{i} =0\right\}\,,\quad\alpha=(\ba,\bb) \in \Z^{m+n}\,,
$$
and
$$
\Phi=(\phi_{i,\nu})_{1\le i \le n,\nu\in S}\,,\quad\text{where }
\phi_{i,\nu}(\ba,\bb)=\psi_{i,\nu}(\ba,\bb)\Vert\ba\Vert_\nu^{-1}\,.
$$
Then it follows from Theorem~\ref{bad_0_general}(i), that $\mu(\Bad_{\cR}(C\Phi))=0$ for any $C>0$. It remains to note that $\Bad_{S,m,n}(\Psi)\subset \Bad_{\cR}(C\Phi)$ for a suitably large constant $C>0$ since we can find absolute constants $c_1,c_2>0$ (depending on $n$, $m$ and $S$) such that for all $i$, $\nu$ and $\alpha=(\ba,\bb)$
$$
\Delta(R_{\alpha,i,\nu},c_1\phi_{i,\nu}(\ba,\bb))\;\subset\;
\{\bx_{i,\nu}\in\Z_\nu^m:|\ba{\bx_{i}}_\nu-b_{i}|_{\nu} <\psi_{i,\nu}(\ba,\bb)\}
\;\subset\; \Delta(R_{\alpha,i,\nu},c_2\phi_{i,\nu}(\ba,\bb))\,.
$$
\end{proof}

Now we specialise the above general result to the setting of weighted Diophantine approximation. We start with a version of Dirichlet's Theorem for $S$-arithmetic approximations.

\begin{lemma}\label{S_arithmetic_Dirichlet_theorem}
Let $n,m \in \N$, $S$ be a finite collection of valuations of $\Q$ and
\begin{equation} \label{omega_definition}
\omega=\begin{cases}
m+n \quad \text{ if } \infty \not \in S, \\
\quad m \quad \, \, \, \, \text{ if } \infty \in S.
\end{cases}
\end{equation}
Let $H_{1}, \dots , H_{\omega} \geq 1$ and $\psi_{i,\nu}\in(0,1)$ for $1\le i\le n$, $\nu\in S$. For $\nu\in S^*$ let $\widetilde\psi_{i,\nu}$ be the largest number of the form $\nu^{-\delta_{i,\nu}}$ with $\delta_{i,\nu}\in\Z$, not exceeding $\psi_{i,\nu}$. Also if $\infty\in S$ let $\widetilde\psi_{i,\infty}=\psi_{i,\infty}$. Suppose that 
\begin{equation} \label{approximation_condition}
\underset{1 \leq i\leq n}{\prod_{\nu \in S}}\widetilde\psi_{i,\nu}\times \prod_{\ell=1}^{\omega}H_{\ell}\ge1.
\end{equation}
Then for any $\bx=(x_{i,j,\nu})\in \Zs^{mn}$ there exists $(\ba,\bb)=(a_{1}, \dots , a_{m},b_1,\dots,b_n)\in\Z^{m+n}\setminus\{\bm0\}$ satisfying
\begin{align}
\label{vb035}
\left|a_{1}x_{i,1,\nu}+ \dots + a_{m}x_{i,m,\nu}-b_{i} \right|_{\nu}&\le\psi_{i,\nu} 
&&(1 \leq i \leq n,\;
\nu \in S^*), \\
\label{vb035B}
\left|a_{1}x_{i,1,\nu}+ \dots + a_{m}x_{i,m,\nu}-b_{i} \right|_{\nu}&<\psi_{i,\nu} 
&&(1 \leq i \leq n,\;\nu=\infty)\quad\text{if $\infty\in S$\,}, \\
\label{integer_condition}
|a_{j}| &\leq H_{j} 
&&(1 \leq j \leq m)\,,\\
\label{integer_condition2}
|b_{i}| &\leq H_{m+i}  
&&(1 \leq i \leq n)\hspace{10.5ex}\text{if $\infty\not\in S$\,.}
\end{align}
\end{lemma}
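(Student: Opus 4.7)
This lemma is an $S$-arithmetic version of Dirichlet's approximation theorem, so my plan is to prove it by a pigeonhole argument (equivalently, Minkowski's convex body theorem in an $S$-arithmetic setting), splitting into two cases according to whether $\infty\in S$. In both cases the starting observation is that for $\nu=p\in S^*$ and $y\in\Z_\nu$, the inequality $|y|_\nu\le\widetilde\psi_{i,\nu}=p^{-\delta_{i,\nu}}$ is equivalent to the congruence $y\in p^{\delta_{i,\nu}}\Z_\nu$; this equivalence is exactly why $\psi_{i,\nu}$ is rounded down to $\widetilde\psi_{i,\nu}$ in hypothesis~\eqref{approximation_condition}.

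\emph{Case $\infty\notin S$ $(\omega=m+n)$.} Here both $\ba$ and $\bb$ are bounded and the finite-place conditions become purely congruential. Consider the group homomorphism
\[
\rho\colon\Z^{m+n}\longrightarrow T:=\prod_{1\le i\le n,\ \nu\in S^*}\Z_\nu/p^{\delta_{i,\nu}}\Z_\nu,\quad (\ba,\bb)\mapsto\Big(\sum_{j=1}^m a_j x_{i,j,\nu}-b_i\bmod p^{\delta_{i,\nu}}\Z_\nu\Big)_{i,\nu},
\]
whose target is finite of cardinality $|T|=\prod_{i,\nu\in S^*}1/\widetilde\psi_{i,\nu}$. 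Restricting $\rho$ to the integer box $B=\prod_{\ell=1}^{m+n}\{0,1,\dots,\lfloor H_\ell\rfloor\}$ gives $|B|=\prod_\ell(\lfloor H_\ell\rfloor+1)>\prod_\ell H_\ell\ge|T|$, where the second inequality is from~\eqref{approximation_condition}. Pigeonhole then yields two distinct elements of $B$ with the same $\rho$-image, and their difference $(\ba,\bb)\in\Z^{m+n}\setminus\{\bm0\}$ satisfies~\eqref{vb035},~\eqref{integer_condition}, and~\eqref{integer_condition2}.

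\emph{Case $\infty\in S$ $(\omega=m)$.} Now $\bb$ is unbounded and the infinite place contributes continuous data, so I combine the congruential reduction with Minkowski's convex body theorem. Write $Q_i:=\prod_{\nu\in S^*}1/\widetilde\psi_{i,\nu}$ and introduce the finite-index sublattice
\[
\Lambda_0:=\Big\{(\ba,\bb)\in\Z^{m+n}:\ \sum_{j=1}^m a_j x_{i,j,\nu}-b_i\in p^{\delta_{i,\nu}}\Z_\nu\ \text{ for all }\nu\in S^*,\ 1\le i\le n\Big\},
\]
which is the kernel of the reduction map $\rho$ of Case 1. Taking $\ba=\bm0$ and applying CRT shows $\rho$ is surjective, giving $[\Z^{m+n}:\Lambda_0]=\prod_i Q_i$. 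The unimodular change of variable $(\ba,\bb)\mapsto(\ba,\mathbf{z})$ with $z_i=\sum_j a_j x_{i,j,\infty}-b_i$ carries $\Lambda_0$ to a lattice $\Lambda_0'\subset\R^{m+n}$ of covolume $\prod_i Q_i$, and the remaining inequalities transform into $|a_j|\le H_j$, $|z_i|<\psi_{i,\infty}$, cutting out the symmetric closed convex body $\overline E=\{|a_j|\le H_j,\ |z_i|\le\psi_{i,\infty}\}$ of volume $2^{m+n}\prod_j H_j\prod_i\psi_{i,\infty}\ge 2^{m+n}\prod_i Q_i$ (which is~\eqref{approximation_condition} rearranged). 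Minkowski's convex body theorem then delivers a non-zero lattice point in $\overline E\cap\Lambda_0'$; undoing the change of variable produces the required $(\ba,\bb)$.

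\emph{Main obstacle.} The delicate point is passing from the non-strict conclusion $|z_i|\le\psi_{i,\infty}$ of closed-body Minkowski to the strict $|z_i|<\psi_{i,\infty}$ required by~\eqref{vb035B}. I expect to handle this by a dilation trick: apply the open-body form of Minkowski to $\overline E_\epsilon=\{|a_j|\le(1+\epsilon)H_j,\ |z_i|\le\psi_{i,\infty}\}$, whose volume strictly exceeds $2^{m+n}\prod_i Q_i$ for any $\epsilon>0$ by~\eqref{approximation_condition}. For $\epsilon$ small enough that $\lfloor(1+\epsilon)H_j\rfloor=\lfloor H_j\rfloor$ for each $j$, integrality of $\ba$ gives $|a_j|\le H_j$, while the strict inequality $|z_i|<\psi_{i,\infty}$ is automatic from the openness of the body.
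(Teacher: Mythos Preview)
Your proposal is correct and follows essentially the same strategy as the paper: reduce the finite-place inequalities \eqref{vb035} to congruence conditions modulo $\nu^{\delta_{i,\nu}}$, package these into a finite-index sublattice of $\Z^{m+n}$, and apply a pigeonhole/Minkowski argument. The only cosmetic differences are that the paper first reduces by density to the case $x_{i,j,\nu}\in\Z$ (which you bypass by working directly in $\Z_\nu/p^{\delta_{i,\nu}}\Z_\nu$), uses Minkowski rather than explicit pigeonhole in the $\infty\notin S$ case, and in the $\infty\in S$ case perturbs a single $H_1\mapsto H_1+\varepsilon$ and passes to a limit rather than dilating all $H_j$ and invoking the open-body form; your variant of the $\varepsilon$-trick is equally valid.
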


The proof of this result can be obtained in a standard way using either the pigeonhole principle or Minkowski's first theorem for convex bodies in $\R^k$, and variants of this lemma are well known, see for example \cite{BLW21b, KT07}. For completeness we provide a brief proof.

\begin{proof}
Since $\Z$ is dense in $\Z_\nu$ for any $\nu\in S^*$, we can assume without loss of generality that $x_{i,j,\nu}\in\Z$ for $\nu\in S^*$. Observe that for $\nu\in S^*$,   $\delta_{i,\nu} \in \N$ and satisfies the inequalities
\begin{equation} \label{delta_bounds}
\nu^{\delta_{i,\nu}-1} < \psi_{i,\nu}^{-1} \le \nu^{\delta_{i,\nu}}
\end{equation}
for each $1 \leq i \leq n$. 
Then, \eqref{vb035} is equivalent to 
\begin{equation}\label{vb035+}
a_{1}x_{i,1,\nu}+ \dots + a_{m}x_{i,m,\nu}-b_{i}\equiv 0\bmod{\nu^{\delta_{i,\nu}}}\qquad(1 \leq i \leq n,\quad
\nu \in S^*)\,,
\end{equation}
which is further equivalent,  by the Chinese Remainder Theorem, to
\begin{equation}\label{vb035++}
a_{1}x_{i,1,\nu}+ \dots + a_{m}x_{i,m,\nu}-b_{i}\equiv 0\bmod{\prod_{\nu\in S^*}\nu^{\delta_{i,\nu}}}\qquad(1 \leq i \leq n)\,.
\end{equation}
Let $\Lambda$ be the sublattice of $\Z^{m+n}$ consisting of integer vectors $(\ba,\bb)=(a_{1} \dots , a_{m},b_1,\dots,b_n)$ satisfying \eqref{vb035++}. It is readily seen that the covolume of $\Lambda$ is given by
$$
\mathrm{covol}(\Lambda)=\prod_{i=1}^n\prod_{\nu\in S^*}\nu^{\delta_{i,\nu}}\,.
$$
Suppose first that $\infty\not\in S$ and so \eqref{vb035B} plays no role. Then \eqref{integer_condition} and \eqref{integer_condition2} define a compact convex body $B$ in $\R^{m+n}$ symmetric about the origin of volume
$$
\textrm{vol}(B)=\prod_{\ell=1}^{m+n}2H_\ell=2^{m+n}\prod_{\ell=1}^{\omega}H_\ell\,.
$$
Then, in view of \eqref{approximation_condition} we can use Minkowski's first theorem to conclude that there exists a non-zero point $(\ba,\bb)\in\Lambda\cap B$, which will thus satisfy \eqref{vb035}, \eqref{integer_condition} and \eqref{integer_condition2}, as required.

Now suppose that $\infty\in S$ and so \eqref{integer_condition2} plays no role. Then \eqref{vb035B} and \eqref{integer_condition}, in which we replace $H_1$ by $H_1+\varepsilon$ with $\varepsilon\in(0,1)$, define a convex body $B_\varepsilon$ in $\R^{m+n}$ symmetric about the origin of volume
$$
\textrm{vol}(B_\varepsilon)>\prod_{j=1}^{m}2H_j\times\prod_{i=1}^{n}2\psi_{\infty,i}=2^{m+n} \prod_{\ell=1}^{\omega}H_\ell\prod_{i=1}^{n}\psi_{\infty,i}\,.
$$
Again, in view of \eqref{approximation_condition} we can use Minkowski's first theorem to conclude that there exists a non-zero point $(\ba,\bb)\in\Lambda\cap B_\varepsilon$. Note that the point $(\ba,\bb)$ may generally depend on $\varepsilon$, but since there are only finitely many points in $\Lambda\cap B_\varepsilon$, we can choose the same $(\ba,\bb)\neq\mathbf{0}$ in each set $\Lambda\cap B_\varepsilon$ and on letting $\varepsilon\to0$ we get the  required result, that is $(\ba,\bb)$ will satisfy \eqref{vb035}, \eqref{vb035B} and \eqref{integer_condition}.
\end{proof}

The following corollary provides a version of Dirichlet theorem with weights.

\begin{corollary}\label{cor6.3}
Let $\bt=(\tau_{i,\nu})_{1\le i\le n,\;\nu\in S}$ and $\bm\eta=(\eta_\ell)_{1\le\ell\le \omega}$ be two collections of positive real numbers $($to be referred to as weights$)$ such that
 \begin{equation} \label{S_arithmetic_tau_condition}
 \sum_{i=1}^{n} \sum_{\nu\in S}\tau_{i,\nu}=\omega=\sum_{\ell=1}^\omega \eta_\ell\,.
 \end{equation}
Then for any $\bx=(x_{i,j,\nu})\in \Zs^{mn}$ and any $H>1$ there exists $(\ba,\bb)=(a_{1}, \dots , a_{m},b_1,\dots,b_n)\in\Z^{m+n}\setminus\{\bm0\}$ satisfying\begin{align}
\label{vb042}
\left|a_{1}x_{i,1,\nu}+ \dots + a_{m}x_{i,m,\nu}-b_{i} \right|_{\nu}&\le\nu H^{-\tau_{i,\nu}} 
&&(1 \leq i \leq n,\;
\nu \in S^*)\,\\
\label{vb043}
\left|a_{1}x_{i,1,\nu}+ \dots + a_{m}x_{i,m,\nu}-b_{i} \right|_{\nu}&<H^{-\tau_{i,\nu}} 
&&(1 \leq i \leq n,\;\nu=\infty)\quad\text{if $\infty\in S$\,}\\
\label{integer_condition3}
|a_{j}| &\leq H^{\eta_j} 
&&(1 \leq j \leq m)\,,\\
\label{integer_condition4}
|b_{i}| &\leq H^{\eta_{m+i}}  
&&(1 \leq i \leq n)\hspace{10.5ex}\text{if $\infty\not\in S$\,.}
\end{align}
Furthermore, if $\infty\in S$ or 
\begin{equation}\label{vb044}
\eta_{m+i}<\sum_{\nu\in S}\tau_{i,\nu}\qquad\text{for all $1\le i\le n$}
\end{equation}
then for large enough $H$ for any $(\ba,\bb)\in\Z^{m+n}\setminus\{\bm0\}$ inequalities \eqref{vb042}---\eqref{integer_condition4} imply that $\ba\neq\bm0$.
\end{corollary}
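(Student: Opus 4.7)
The plan is to deduce Corollary~\ref{cor6.3} directly from Lemma~\ref{S_arithmetic_Dirichlet_theorem} via a tailored choice of parameters, and then argue separately that $\ba\neq\bm0$ under the stated hypotheses. Concretely, I will set $H_\ell=H^{\eta_\ell}$ for $\ell=1,\dots,\omega$, $\psi_{i,\nu}=\nu H^{-\tau_{i,\nu}}$ for $\nu\in S^*$, and $\psi_{i,\infty}=H^{-\tau_{i,\infty}}$ when $\infty\in S$. I will implicitly assume $H$ is large enough that every $\psi_{i,\nu}$ lies in $(0,1)$ as required by the lemma; the small-$H$ regime can be handled by replacing any $\psi_{i,\nu}\geq 1$ at finite places by a smaller value like $1/\nu$, which only strengthens the conclusion there and does not affect the product estimate below.

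The key calculation is verifying the product hypothesis \eqref{approximation_condition}. For each $\nu\in S^*$, since $\widetilde\psi_{i,\nu}$ is the largest negative power of $\nu$ not exceeding $\nu H^{-\tau_{i,\nu}}$, one has $\nu\widetilde\psi_{i,\nu}>\nu H^{-\tau_{i,\nu}}$, i.e., $\widetilde\psi_{i,\nu}>H^{-\tau_{i,\nu}}$; for $\nu=\infty$ clearly $\widetilde\psi_{i,\infty}=H^{-\tau_{i,\infty}}$. Invoking \eqref{S_arithmetic_tau_condition} then gives
\begin{equation*}
\prod_{\nu\in S}\prod_{i=1}^{n}\widetilde\psi_{i,\nu}\cdot\prod_{\ell=1}^{\omega}H_{\ell}\;\ge\;H^{-\sum_{\nu,i}\tau_{i,\nu}+\sum_{\ell}\eta_\ell}\;=\;1,
\end{equation*}
so Lemma~\ref{S_arithmetic_Dirichlet_theorem} applies and yields a non-zero $(\ba,\bb)\in\Z^{m+n}$ meeting \eqref{vb042}--\eqref{integer_condition4}, proving the first part.

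For the second claim I argue by contradiction, assuming $\ba=\bm0$. If $\infty\in S$, then \eqref{vb043} reduces to $|b_i|<H^{-\tau_{i,\infty}}<1$ (since $\tau_{i,\infty}>0$ and $H>1$), forcing $b_i=0$ for every $i$ and so $(\ba,\bb)=\bm0$, a contradiction. If $\infty\notin S$ and \eqref{vb044} holds, then for each $\nu\in S^*$ the inequality $|b_i|_\nu\le\nu H^{-\tau_{i,\nu}}$ means the exact power $\nu^{v_\nu(b_i)}$ dividing $b_i$ satisfies $\nu^{v_\nu(b_i)}\ge H^{\tau_{i,\nu}}/\nu$. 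Consequently, if $b_i\neq 0$, combining across $\nu\in S^*$ via unique factorisation,
\begin{equation*}
|b_i|\;\ge\;\prod_{\nu\in S^*}\nu^{v_\nu(b_i)}\;\ge\;C^{-1}H^{\sum_{\nu\in S}\tau_{i,\nu}}\qquad\text{with}\quad C=\prod_{\nu\in S^*}\nu,
\end{equation*}
which, compared against $|b_i|\le H^{\eta_{m+i}}$ from \eqref{integer_condition4}, contradicts \eqref{vb044} once $H$ is large. Hence $b_i=0$ for every $i$, again yielding $(\ba,\bb)=\bm0$.

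The main obstacle, though largely routine, will be to track $\nu$-adic valuations carefully so that the discretisation slack between $\widetilde\psi_{i,\nu}$ and $\psi_{i,\nu}$ and the extra $\nu$-factor in \eqref{vb042} align to give the product bound at least $1$ in every configuration; notably in the borderline case $S=\{\infty\}$ the estimate becomes an equality rather than strict inequality, which still suffices to invoke the lemma.
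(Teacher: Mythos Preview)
Your proposal is correct and follows exactly the paper's approach: invoke Lemma~\ref{S_arithmetic_Dirichlet_theorem} with $H_\ell=H^{\eta_\ell}$ and the $\psi_{i,\nu}$ you specify to get existence, then argue $\ba\neq\bm0$ by contradiction via the product of $\nu$-adic valuations of $b_i$, which is precisely what the paper does. One minor slip worth flagging: your fix for the small-$H$ regime does not work as stated, since replacing $\psi_{i,\nu}\ge1$ by $1/\nu$ yields $\widetilde\psi_{i,\nu}=1/\nu\le H^{-\tau_{i,\nu}}$ at those places and can pull the product below~$1$; the paper elides this point entirely, and the clean repair is to note that the proof of Lemma~\ref{S_arithmetic_Dirichlet_theorem} goes through verbatim with $\delta_{i,\nu}=0$ (the congruence modulo $\nu^0$ is vacuous), so one may effectively take $\widetilde\psi_{i,\nu}=1\ge H^{-\tau_{i,\nu}}$ there.
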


\begin{proof}
The existence of $(\ba,\bb)\in\Z^{m+n}\setminus\{\bm0\}$ satisfying \eqref{vb042}---\eqref{integer_condition4} follows immediately from Lemma~\ref{S_arithmetic_Dirichlet_theorem}. Thus we only need to make sure that $\ba\neq\bm0$ under the conditions stated above. Assume the contrary, that is $\ba=\bm0$. Then, in the case $\infty\in S$, \eqref{vb043} implies that $\bb=\bm0$ which is impossible for $(\ba, \bb)$ must be non-zero. Now, assume that $\infty\not\in S$. Then, by \eqref{vb042}, we get that
$$
\prod_{\nu\in S}|b_i|_\nu\ll H^{-\sum_{\nu\in S}\tau_{i,\nu}}\,,
$$
which implies that $|b_i|\gg H^{\sum_{\nu\in S}\tau_{i,\nu}}$ unless $b_i=0$. Consequently, since $\bb\neq\bm0$, we get that there exists $1\le i\le n$ such that
$$
H^{\sum_{\nu\in S}\tau_{i,\nu}}\ll H^{\eta_{m+i}}
$$
which gets contradictory to \eqref{vb044} for large $H$.
\end{proof}

\medskip

In view of Corollary \ref{cor6.3} it is natural to introduce the following set of $(\bt,\bm\eta)$-badly approximable points 
\begin{equation*}
\Bad_{S,m,n}(\bt,\bm\eta)=\Z_S^{mn}\setminus \bigcap_{\ell\in\N}W_{S,m,n}(\tfrac1\ell,\bt,\bm\eta)\,,
\end{equation*}
where $W_{S,m,n}(c,\bt,\bm\eta)$ consists of $\bx\in \Zs^{mn}$ such that for infinitely many $H>1$ there exists $(\ba, \bb) \in \Z^{m+n}$ satisfying  \eqref{integer_condition3}, \eqref{integer_condition4} and 
\begin{align}
\label{vb042+}
\left|a_{1}x_{i,1,\nu}+ \dots + a_{m}x_{i,m,\nu}-b_{i} \right|_{\nu}&< c H^{-\tau_{i,\nu}} 
&&(1 \leq i \leq n,\;
\nu \in S^*)\,,\\
\label{vb043+}
\left|a_{1}x_{i,1,\nu}+ \dots + a_{m}x_{i,m,\nu}-b_{i} \right|_{\nu}&< c H^{-\tau_{i,\nu}} 
&&(1 \leq i \leq n,\;\nu=\infty)\quad\text{if $\infty\in S$\,}\,.
\end{align}

\medskip

\begin{remark}
We would like to note that in the case $\infty\not\in S$ condition \eqref{vb044} is essentially required to create any interesting theory for $\Bad_{S,m,n}(\bt,\bm\eta)$ since, if
\begin{equation}\label{vb47A}
\eta_{m+i_0}>\sum_{\nu\in S}\tau_{i_0,\nu}\qquad\text{for some $i_0$,}
\end{equation}
then for any $\bx\in\Z_S^{mn}$ and any $c>0$ one can take $\ba=\bm0$ and $\bb$ such that $b_i=0$ if $i\neq i_0$ to construct a non-zero solution $(\ba,\bb)\in \Z^{m+n}$ satisfying \eqref{integer_condition3}, \eqref{integer_condition4}, \eqref{vb042+} and \eqref{vb043+} for any large enough $H$. Thus, if \eqref{vb47A} holds, $\Bad_{S,m,n}(\bt,\bm\eta)$ is simply empty. Thus, we will assume that \eqref{vb044} holds whenever $\infty\not\in S$. We would also like to note that in the case $S=\{p\}$ consists of one prime, by \eqref{S_arithmetic_tau_condition}, condition \eqref{vb044} always holds.
\end{remark}

We will prove that $\Bad_{S,m,n}(\bt,\bm\eta)$ is null under a mild  assumption that will require further definitions. 
Let $\delta\in\R$. Given $H\in\N$,
let $W^\delta_{S,m,n}(\bm\tau,\bm\eta;H)$ be the set of $\bx\in\Z_S^{mn}$ such that there exists $(\ba,\bb)\in \Z^{m+n}\setminus\{\bm0\}$ satisfying \eqref{vb042}---\eqref{integer_condition4} and 
\begin{equation}\label{alphacondition}
\min\{\|\ba\|_\nu:\nu\in S^*\}\le H^{-\delta}\,,
\end{equation}
where 
$\|\ba\|_\nu:=\max\{|a_1|_\nu,\dots,|a_m|_\nu\}$ for $\ba=(a_1,\dots,a_m)\in\Z^m$.
Let $W^\delta_{S,m,n}(\bm\tau,\bm\eta)$ be the set of $\bx\in W^\delta_{S,m,n}(\bm\tau,\bm\eta;H)$ for infinitely many $H\in\N$.

\begin{theorem}\label{thm6.5}
Let $\bm\eta$  satisfy \eqref{S_arithmetic_tau_condition} and, in the case $\infty\not\in S$, \eqref{vb044}. Suppose that 
\begin{equation}\label{assumption}
\mu_{S,m,n}\left(W^\delta_{S,m,n}(\bm\tau,\bm\eta)\right)=0\quad\text{for some }0<\delta<\min\{\tau_{i,\nu}:1\le i\le n,\;\nu\in S^*\}\,.
\end{equation}
Then  $\Bad_{S,m,n}(\bt,\bm\eta)$ has measure $0$.
\end{theorem}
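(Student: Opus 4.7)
The plan is to recast $\Bad_{S,m,n}(\bt,\bm\eta)$ in the abstract framework of Section~\ref{GS} and apply Theorem~\ref{bad_0_general}(i), invoking the hypothesis~\eqref{assumption} only to discard the exceptional piece where some $\|\ba\|_\nu$ is too small for the machinery to engage.

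By Corollary~\ref{cor6.3} (applicable because $\infty\in S$ or~\eqref{vb044} holds, forcing $\ba\neq\mathbf 0$ in Dirichlet solutions for large $H$) there is an absolute constant $c_0>0$ with $W_{S,m,n}(c_0,\bt,\bm\eta)=\Z_S^{mn}$. Hence
$$
\Bad_{S,m,n}(\bt,\bm\eta)=W_{S,m,n}(c_0,\bt,\bm\eta)\setminus\bigcap_{\ell\in\N}W_{S,m,n}\!\left(\tfrac1\ell,\bt,\bm\eta\right),
$$
and by Lemma~\ref{measure_null} it suffices to show $\{W_{S,m,n}(\tfrac1\ell,\bt,\bm\eta)\}_\ell$ is constant invariant modulo $\mu_{S,m\times n}$-null sets. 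To reach the framework, index $\alpha=(\ba,\bb)\in\cA:=\{(\ba,\bb)\in\Z^{m+n}\setminus\{\mathbf 0\}:\ba\neq\mathbf 0\}$, let $H_0(\alpha)$ be the least $H\ge 1$ satisfying~\eqref{integer_condition3}--\eqref{integer_condition4}, and set
$$
R_\alpha=\prod_{i,\nu}\{\bx_{i,\nu}\in\Z_\nu^m:a_1x_{i,1,\nu}+\dots+a_mx_{i,m,\nu}-b_i=0\},\qquad
\phi_{i,\nu}(\alpha)=H_0(\alpha)^{-\tau_{i,\nu}}\|\ba\|_\nu^{-1}.
$$
Each $R_\alpha\subset\supp\mu_{S,m\times n}=\Z_S^{mn}$, the measure $\mu_{S,m\times n}$ is doubling (a product of Haar measures), and up to absolute multiplicative constants $\Delta(R_\alpha,c\Phi(\alpha))$ is the set of $\bx$ satisfying~\eqref{vb042+}--\eqref{vb043+} of width $cH_0(\alpha)^{-\tau_{i,\nu}}$.

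Now split $\cA=\cA_\delta\sqcup\cA_\delta^c$ according to whether $\min_{\nu\in S^*}\|\ba\|_\nu>H_0(\alpha)^{-\delta}$. For $\alpha\in\cA_\delta$ and $\nu\in S^*$ we get $\phi_{i,\nu}(\alpha)\le H_0(\alpha)^{-(\tau_{i,\nu}-\delta)}\to 0$ as $\|\alpha\|_\infty\to\infty$ since $\delta<\tau_{i,\nu}$, while for $\nu=\infty$ the bound $\|\ba\|_\infty\ge 1$ gives the same conclusion trivially. Thus Theorem~\ref{bad_0_general}(i) applied to the restricted family $\cR_\delta=(R_\alpha)_{\alpha\in\cA_\delta}$ forces $\W_{\cR_\delta}(\Phi)$ to be constant invariant. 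On the other hand, those points $\bx$ approximated only by witnesses in $\cA_\delta^c$ are, after a standard rescaling $H\mapsto\kappa H$ absorbing the multiplicative constant $c=1/\ell$ into $H$, captured by $W^{\delta'}_{S,m,n}(\bt,\bm\eta)$ for some $\delta'\in(0,\min_{i,\nu\in S^*}\tau_{i,\nu})$, hence are null by~\eqref{assumption}. Combining the two contributions yields constant invariance of the family $\{W_{S,m,n}(\tfrac1\ell,\bt,\bm\eta)\}_\ell$ modulo null sets, and Lemma~\ref{measure_null} concludes the proof.

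The main obstacle is reconciling the arbitrary constants $c=1/\ell$ appearing in the definition of $\Bad_{S,m,n}(\bt,\bm\eta)$ with the rigid normalization (unit constant on the $\psi_{i,\nu}$) used to define $W^\delta_{S,m,n}(\bt,\bm\eta)$ in the hypothesis. The rescaling $H\mapsto\kappa H$ perturbs $\delta$ by a vanishing amount, which is permissible precisely because the hypothesis supplies the strict inequality $\delta<\min_{i,\nu\in S^*}\tau_{i,\nu}$; still, tracking how the integer bounds~\eqref{integer_condition3}--\eqref{integer_condition4} and the perturbed $\delta'$ interact to keep the exceptional set inside the hypothesis's null set is the fiddliest book-keeping in the argument.
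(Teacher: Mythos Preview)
Your approach is essentially the same as the paper's: set up the abstract framework with $R_\alpha$ the hyperplane and $\phi_{i,\nu}(\alpha)=H_0(\alpha)^{-\tau_{i,\nu}}\|\ba\|_\nu^{-1}$, split off the witnesses with some $\|\ba\|_\nu$ too small, handle the remaining family by Theorem~\ref{bad_0_general}(i), and absorb the split-off part into the null set supplied by~\eqref{assumption}. The paper streamlines this by defining a single $\Psi$ that is zero on the exceptional witnesses and invoking Theorem~\ref{thm4.1} directly, but the content is identical.

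One small point: your ``rescaling $H\mapsto\kappa H$'' and passage to a perturbed $\delta'$ are unnecessary and slightly misleading. If a witness $(\ba,\bb)$ lies in $\cA_\delta^c$, then by definition $\min_{\nu\in S^*}\|\ba\|_\nu\le H_0(\ba,\bb)^{-\delta}$; and since any witness at height $H$ is also a witness at the (smaller) height $H_0(\ba,\bb)$, with the \emph{original} Dirichlet constants of~\eqref{vb042}--\eqref{integer_condition4}, the point lies directly in $W^\delta_{S,m,n}(\bt,\bm\eta;H_0(\ba,\bb))$. No rescaling of $H$ and no change of $\delta$ is needed, so the strict inequality $\delta<\min\tau_{i,\nu}$ is used only to guarantee $\phi_{i,\nu}\to0$ on $\cA_\delta$, not for any book-keeping with $\delta'$.
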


\begin{proof}
Given $(\ba,\bb)\in\Z^{m+n}\setminus\{\bm0\}$, let $H(\ba,\bb)$ be the smallest $H\in\N$ such that \eqref{integer_condition3} and \eqref{integer_condition4} hold. Let $\delta$ be as in \eqref{assumption}. For each $1\le i\le n$, $\nu\in S$ and $(\ba,\bb)\in\Z^{m+n}\setminus\{\bm0\}$ let
$$
\psi_{i,\nu}(\ba,\bb)=\left\{\begin{array}{cl}
   2\nu H(\ba,\bb)^{-\tau_i,\nu}  & \text{if }\nu\in S^*, \&\; \|\ba\|_\nu\ge H(\ba,\bb)^{-\delta}\,, \\[1ex]
   H(\ba,\bb)^{-\tau_i,\nu}  & \text{if }\nu=\infty\in S, \&\; \ba\neq\bm0\,, \\[1ex]
   0  & \text{otherwise}\,.
\end{array}\right.
$$
Then \eqref{vb032} holds. Furthermore, by Corollary~\ref{cor6.3}, we have that 
$$
\Bad_{S,m,n}(\bt,\bm\eta)\subset\Bad_{S,m,n}(\Psi) \cup W^\delta_{S,m,n}(\bm\tau,\bm\eta)\cup\bigcup_{\alpha=(\ba,\bb)\in\Z^{m+n}\setminus\{\bm0\}} R_\alpha\,,
$$
where $R_\alpha$ is the same as in the proof of Theorem~\ref{thm4.1}. Note that $\mu_{S,m,n}(R_\alpha)=0$ whenever $\ba\neq\bm0$. Then, by Theorem~\ref{thm4.1} and and assumption \eqref{assumption}, we complete the proof.
\end{proof}

\begin{remark}
We believe  \eqref{assumption} holds in all scenarios described in the statement of  Theorem~\ref{thm6.5}. In the following proposition we  demonstrate this in the classical case when all the integer parameters have the same bound, that is the weights $\eta_\ell$ are all equal, and $\infty\not\in S$. 
\end{remark}

\begin{proposition}\label{prop6.7}
Let $\bm\eta=(1,\dots,1)$, $\infty\not\in S$ and \eqref{S_arithmetic_tau_condition} and \eqref{vb044} hold. Then for any $\alpha>0$ the set $W^\alpha_{S,m,n}(\bm\tau,\bm\eta)$ has measure zero.
\end{proposition}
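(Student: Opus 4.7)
The plan is to apply the Borel--Cantelli lemma along the geometric sub-sequence $H_j=2^j$. Fix $\alpha>0$, and choose $0<\alpha'<\alpha$ together with a weight vector $\bt'=(\tau'_{i,\nu})$ satisfying $\tau'_{i,\nu}<\tau_{i,\nu}$ for every $(i,\nu)$ and $\delta:=\sum_{i,\nu}(\tau_{i,\nu}-\tau'_{i,\nu})<m\alpha'$. Denote by $W^{\alpha'}_{\bt'}(H)$ the analogue of $W^{\alpha'}_{S,m,n}(\bt,\bm\eta;H)$ with $\bt$ replaced by $\bt'$. A direct comparison of the defining inequalities shows that, for $j$ large (depending only on $\alpha,\alpha',\bt,\bt'$) and any $H\in[2^{j-1},2^j]$, the inclusion $W^{\alpha}_{S,m,n}(\bt,\bm\eta;H)\subseteq W^{\alpha'}_{\bt'}(2^j)$ holds; the losses incurred by enlarging $H$ to $2^j$ amount to bounded multiplicative constants, which the gaps $\alpha-\alpha'$ and $\tau_{i,\nu}-\tau'_{i,\nu}$ absorb. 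Consequently $W^\alpha_{S,m,n}(\bt,\bm\eta)\subseteq\limsup_j W^{\alpha'}_{\bt'}(2^j)$, and it suffices to prove that the series $\sum_j\mu_{S,m\times n}(W^{\alpha'}_{\bt'}(2^j))$ converges.

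The core estimate I aim to establish is
\begin{equation*}
\mu_{S,m\times n}\bigl(W^{\alpha'}_{\bt'}(H)\bigr)\ll H^{\delta-m\alpha'}\log H,
\end{equation*}
obtained by a union bound over the finite set $S$ followed by a $\gcd$-parametrisation of $\ba$. Fix $\nu_0\in S$ and consider the subset on which $\|\ba\|_{\nu_0}\le H^{-\alpha'}$; writing $d:=\gcd(\ba)$, this reads $\nu_0^{K_0}\mid d$ with $\nu_0^{K_0}\ge H^{\alpha'}$ and $d\le H$. For fixed such $d$, I combine three bounds. First, $\#\{\ba\in\Z^m:\gcd(\ba)=d,\,\|\ba\|_\infty\le H\}\ll(H/d)^m$. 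Second, for the $\bx$-set to be non-empty, one needs $|b_i|_\nu\le\max(\nu H^{-\tau'_{i,\nu}},|d|_\nu)$ for every $(i,\nu)$, which in particular forces $\nu^{v_\nu(d)}\mid b_i$ for every $\nu\in S$; combining via the Chinese Remainder Theorem, $d_S\mid b_i$ with $d_S:=\prod_{\nu\in S}\nu^{v_\nu(d)}$, whence $\#\{\bb\}\ll(H/d_S)^n$. Third, the measure of the $\bx$-set satisfies
$$\mu_{S,m\times n}\bigl(\text{$\bx$-set for $(\ba,\bb)$}\bigr)\le\prod_{i,\nu}\min\Bigl(1,\tfrac{\nu H^{-\tau'_{i,\nu}}}{|d|_\nu}\Bigr)\le C\,d_S^n\,H^{-\sum_{i,\nu}\tau'_{i,\nu}},$$
using the identity $\prod_{\nu\in S}|d|_\nu^{-1}=d_S$ for $d\in\N$. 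Multiplying these three factors, the two occurrences of $d_S^n$ cancel, leaving a per-$d$ contribution of order $H^\delta/d^m$; summing over $d\le H$ with $\nu_0^{K_0}\mid d$ via the substitution $d=\nu_0^{K_0}d''$ yields $\sum_d d^{-m}\ll H^{-m\alpha'}\log H$, whence the claim.

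Substituting $H=2^j$, the series $\sum_j j\cdot 2^{j(\delta-m\alpha')}$ converges since $\delta<m\alpha'$, and the Borel--Cantelli lemma gives $\mu_{S,m\times n}(\limsup_j W^{\alpha'}_{\bt'}(2^j))=0$; hence $\mu_{S,m\times n}(W^\alpha_{S,m,n}(\bt,\bm\eta))=0$. The principal technical obstacle is the arithmetic cancellation of the two factors $d_S^n$ -- one arising from the divisibility-constrained count of $\bb$, the other from the measure of the $\bx$-set -- which is ultimately a consequence of the product formula $|d|_\infty\prod_p|d|_p=1$ applied to the integer $d$. It is precisely this cancellation that yields the clean bound $1/d^m$ per $d$, and hence makes the geometric sum converge uniformly for every $\alpha>0$.
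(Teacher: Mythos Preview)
Your overall strategy---Borel--Cantelli along a dyadic sequence, a union bound over $\nu_0\in S$, parametrisation by $d=\gcd(\ba)$ (which is legitimate since $\|\ba\|_\nu=|d|_\nu$ for every prime $\nu$), and a three-factor product of $\ba$-count, $\bb$-count, and $\bx$-measure---is essentially the paper's. The paper organises the same computation slightly differently, decomposing by the dyadic profile $\bm t=(t_\nu)$ with $2^{-t_\nu-1}<\|\ba\|_\nu\le 2^{-t_\nu}$ together with the support $J$ of $\ba$, and it absorbs the passage to $H=2^h$ via an explicit multiplicative constant $c$ rather than by shrinking $\bt\to\bt'$ and $\alpha\to\alpha'$; its final bound $\mu\ll H^{-\alpha}$ is cleaner than your $H^{\delta-m\alpha'}\log H$, but either suffices.

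There is, however, one genuine gap. From $|b_i|_\nu\le\max(\nu H^{-\tau'_{i,\nu}},|d|_\nu)$ you infer $\nu^{v_\nu(d)}\mid b_i$, i.e.\ $|b_i|_\nu\le|d|_\nu$. This fails whenever $\nu H^{-\tau'_{i,\nu}}>|d|_\nu$, which certainly occurs (take any $\nu\in S$ with $v_\nu(d)$ large relative to $\tau'_{i,\nu}\log_\nu H$; there is no obstruction to this even under $\nu_0^{K_0}\mid d$). Consequently your $\bb$-count $\ll(H/d_S)^n$ is an \emph{under}-count, not an upper bound, so the subsequent cancellation of the two $d_S^n$ factors is unjustified as stated. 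The fix is precisely what the paper does: do not bound the $\bb$-count and the $\bx$-measure separately, but multiply them first. Using the identity $\max(\psi,|d|_\nu)\cdot\min(1,\psi/|d|_\nu)=\psi$ coordinatewise, one obtains for each $i$
\[
(\#\text{ of }b_i)\times(\text{measure in coord.\ }i)\ \ll\ \prod_\nu\min\Bigl(1,\tfrac{\psi_{i,\nu}}{|d|_\nu}\Bigr)+H\prod_\nu\psi_{i,\nu}\ \le\ 2H\prod_\nu\psi_{i,\nu},
\]
where the first term is bounded by $d_S\prod_\nu\psi_{i,\nu}\le H\prod_\nu\psi_{i,\nu}$ since $d_S\le d\le H$. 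After this correction the per-$d$ contribution is again $\ll H^{\delta}/d^m$, and your summation over $d$ and the final conclusion go through unchanged.
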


\begin{proof}
Define $\widetilde W^\alpha_{S,m,n}(c,\bt,\bm\eta;H)$ to consist of $\bx\in\Z_S^{mn}$ such that there exists $(\ba,\bb)\in \Z^{m+n}$ with $\ba\neq\bm0$ satisfying \eqref{integer_condition3},\eqref{integer_condition4}, \eqref{vb042+}, \eqref{vb043+},  
\begin{equation}\label{alphacondition2}
\min\{\|\ba\|_\nu:\nu\in S^*\}\le cH^{-\delta}\,.
\end{equation}
By Corollary~\ref{cor6.3} and the definitions of the sets in question, for any large enough $H\in [2^{h-1},2^h]$ we have that
$$
W^\delta_{S,m,n}(\bt,\bm\eta;H)\subset \widetilde W^\delta_{S,m,n}(c,\bt,\bm\eta;2^h)
$$
for a suitably chosen fixed constant $c>0$. Therefore, 
$$
W^\delta_{S,m,n}(\bt,\bm\eta)\subset \limsup_{h\to\infty}\widetilde W^\delta_{S,m,n}(c,\bt,\bm\eta;2^h)\,.
$$
Therefore, by the Borel-Cantelli lemma, to complete the proof it suffices to show that the sum over all sufficiently large $h$ of the measures of $\widetilde W^\delta_{S,m,n}(c,\bt,\bm\eta;2^h)$ converges.

Fix $H$ and split the points 
$(\ba,\bb)\in \Z^{m+n}$ with $\ba\neq\bm0$ satisfying \eqref{integer_condition3},\eqref{integer_condition4}
into sub-classes $\cF_{\bm t,J}$, where $\bm t=(t_\nu)_{\nu\in S}$ are non-negative integers and $\varnothing\neq J\subset\{1,\dots,m\}$, such that $(\ba,\bb)\in\cF_{\bm t,J}$ if and only if $\ba$ has non-zero coordinates $a_j$ exactly for $j\in J$,
\begin{equation}\label{vb047N}
2^{-t_\nu-1}<\|\ba\|_\nu\le 2^{-t_\nu}
\end{equation}
and \eqref{vb042+} has a solution $\bx\in\Z_S^{mn}$.
Observe that for a fixed $(\ba,\bb)\in\cF_{\bm t,J}$ the measure of $\bx$ satisfying \eqref{vb042+} is 
\begin{equation}\label{est1}
\ll \prod_{i=1}^n\prod_{\nu\in S}\min\left\{1,\frac{H^{-\tau_{i,\nu}}}{2^{-t_\nu}}\right\}=
\prod_{i=1}^n\prod_{\nu\in S}2^{t_\nu}\min\left\{2^{-t_\nu},H^{-\tau_{i,\nu}}\right\}\,.
\end{equation}
Fix any $(\ba,\bb)\in\cF_{\bm t,J}$. Then for any choice of $(i,\nu)$, by \eqref{vb042+}, we have that
$$
|b_i|_\nu\ll \max\left\{2^{-t_\nu},H^{-\tau_{i,\nu}}\right\}\,.
$$
Hence for each $1\le i\le n$ the number of different values of $b_i$ satisfying the above inequalities over all $\nu\in S$ is
\begin{equation}\label{est2}
\ll 1+ H^{\eta_{m+i}}\prod_{\nu\in S}\max\left\{2^{-t_\nu},H^{-\tau_{i,\nu}}\right\}\,.
\end{equation}
The number of different $\ba$ satisfying \eqref{vb047N} is
\begin{equation}\label{est3}
\prod_{j\in J} \left(H^{\eta_j}\prod_{\nu\in S}2^{-t_\nu}\right)
\end{equation}
and furthermore we must have that
\begin{equation}\label{J}
H^{\eta_j}\prod_{\nu\in S}2^{-t_\nu}\ge1\qquad\text{for each }j\in J\,.
\end{equation}
Then, on combining estimates \eqref{est1}, \eqref{est2} and \eqref{est3} we get that the measure of the set $S_{\bm t,J}$ consisting of $\bx$ such that there exists $(\ba,\bb)\in\cF_{\bm t,J}$ satisfying \eqref{vb042+}, is
\begin{align*}
\mu_{S,m,n}(S_{\bm t,J}) & \ll\prod_{i=1}^n\prod_{\nu\in S}2^{t_\nu}\min\left\{2^{-t_\nu},H^{-\tau_{i,\nu}}\right\} \times \\[1ex]
&\times \prod_{i=1}^n\left(1+ H^{\eta_{m+i}}\prod_{\nu\in S}\max\left\{2^{-t_\nu},H^{-\tau_{i,\nu}}\right\}\right)\times\prod_{j\in J} \left(H^{\eta_j}\prod_{\nu\in S}2^{-t_\nu}\right)\,.
\end{align*}
Since $J\neq\varnothing$, by \eqref{J} and the assumption $\bm\eta=(1,\dots,1)$ we have that
the `$1+$' term in the above expression can be omitted and we get the following upper bound for the measure
$$
\mu_{S,m,n}(S_{\bm t,J})\ll\prod_{i=1}^n\prod_{\nu\in S}H^{-\tau_{i,\nu}} \times \prod_{i=1}^n H^{\eta_{m+i}}\times\prod_{j\in J} H^{\eta_j}\prod_{j\in J}\prod_{\nu\in S}2^{-t_\nu}\le \prod_{j\in J}\prod_{\nu\in S}2^{-t_\nu} \le \prod_{\nu\in S}2^{-t_\nu}\,.
$$
By \eqref{alphacondition2}, and since the number of different subsets $J$ is finite, we get that 
$$
\sum_{J}\sum_{\bm t}\mu_{S,m,n}(S_{\bm t,J} \cap \widetilde W^\delta_{S,m,n}(c,\bt,\bm\eta;H))\ll H^{-\delta}\,.
$$
Since $\widetilde W^\delta_{S,m,n}(c,\bt,\bm\eta;H)$ is the union of the sets $S_{\bm t,J}\cap \widetilde W^\delta_{S,m,n}(c,\bt,\bm\eta;H)$ taken over $J$ and $\bm t$, we get that
$$
\sum_{h=1}^\infty\mu_{S,m,n}(\widetilde W^\delta_{S,m,n}(c,\bt,\bm\eta;2^h))\ll \sum_{h=1}^\infty 2^{-\delta h}<\infty\,.
$$
The Borel-Cantelli Lemma completes the proof.
\end{proof}

\begin{remark}
The case of $\bt=(\tau, \dots , \tau)$ and $\bm\eta=(\eta,\dots,\eta)$ satisfying \eqref{S_arithmetic_tau_condition} was already known, since there exist simultaneous Khintchine-Groshev type theorems in the $S$-arithmetic setting (see for example \cite{L55} for a linear forms version, \cite[Theorem 4]{H10} for a simultaneous $S$-arithmetic version, or \cite{J45} for a $p$-adic version). In the weighted case the $p$-adic result was also already known via Theorem 2.1 of \cite{BLW21b}. As far as we are aware an $S$-arithmetic weighted Khintchine-Groshev type Theorem is yet to be proven, so Theorem~\ref{thm6.5} together with Proposition~\ref{prop6.7} is new. We note that a related Khintchine-Groshev type theorem is probably attainable via a combination of methods introduced in the papers cited above. However, our technique seems much easier and more flexible to use, for example, one could allow the weights $\bt$ and $\bm\eta$ to depend on $H$.
 \end{remark}

\section{Application II: Approximations on Fractals}\label{App2}

In 1984 Mahler \cite{M84} asked how well elements of Cantor sets $\cK$ can be approximated by either
\begin{enumerate}
\item rational points contained in the Cantor set, \label{intrinsic}
\item rational points not in the Cantor set. \label{extrinsic}
\end{enumerate}

Among various answers and extensions to these questions, see for example \cite{BFR11, FS14, LSV07}, it has been shown that the set of badly approximable points intersected with the Cantor set has full Hausdorff dimension on $\cK$ \cite{KW05, KTV06}. Furthermore it was shown that in the simultaneous case that the Hausdorff $\dim \cK$-measure of the badly approximable points intersected with $\cK$ was null \cite[Corollary 1.10]{EFS11}. \par
In this section we consider a slightly different setup. Rather than the set of classical badly approximable points intersected with $\cK$ we consider the set of $(\Psi, \cR)$-badly approximable points where $\cR$ is the collection of rational points contained within $\cK$. That is, we consider Mahler's question of type (1). Our result holds for Cantor sets that are the attractor of a self similar iterated function system. \par

Fix $r \in \N$ and let $\Lambda_{j}$ be a finite collection of mappings $f^{(j)}_{i}:\I \to \I$, for $\I=[0,1]$, of the form
\begin{equation*}
f^{(j)}_{i}(x)=\frac{x+a_{i}}{r} \qquad \text{ with each } \, \, 0 \leq a_{i} \leq r-1
\end{equation*}
for $1 \leq j \leq n$. Let $\cK_{j}$ be the unique attractor of $\Lambda_{j}$, that is
\begin{equation*}
\cK_{j}=\bigcup_{i \in \Lambda_{j}} f^{(j)}_{i}(\cK_{j}).
\end{equation*}
Observe that $\Lambda_{j}$ is a family of contracting similarities, thus $\cK_{j}$ is a self similar set and
\begin{equation*}
\dim \cK_{j}=\frac{\log \#\Lambda_{j}}{\log r}=\gamma_{j},
\end{equation*}
see for example \cite[Theorem 9.3]{F14}. Let $\Lambda=\prod_{j=1}^{n}\Lambda_{j}$ and so for each $\bi=(i_{1}, \dots , i_{n})\in \Lambda$ associate the mapping $\bff_{\bi}:\I^{n} \to \I^{n}$ with
\begin{equation*}
\bff_{\bi}\left( \left(\begin{array}{c} x_{1} \\ x_{2} \\ \vdots \\ x_{n} \end{array}\right) \right)=\left( \begin{array}{c} \frac{x_{1}+a_{i_1}}{r} \\ \frac{x_{2}+a_{i_2}}{r} \\ \vdots \\ \frac{x_{n}+a_{i_n}}{r} \end{array}\right).
\end{equation*}
Let $\cK$ be the attractor of the IFS $\Lambda$ and observe that $\cK=\prod_{j=1}^{n}\cK_{j}$ and
\begin{equation*}
\dim \cK=\frac{\log \#\Lambda}{\log r}=\frac{\log \prod_{j=1}^{n}\#\Lambda_{j}}{\log r}=\sum_{j=1}^{n}\gamma_{j}=\gamma.
\end{equation*}
Let $\mu$ be the unique Borel probability measure supported on $\cK$ satisfying
\begin{equation}\label{mu}
\mu(A)=\sum_{i \in \Lambda} r^{\gamma}\mu\left(\bff_{i}^{-1}(A)\right)
\end{equation}
for all Borel subsets $A \subset \R^{n}$. Call such a measure the canonical self similar measure of $\cK$. Note that $\mu=\prod_{j=1}^{n}\mu_{j}$ where each $\mu_{j}$ is the canonical self similar measure of $\cK_{j}$ since
\begin{align*}
\mu\left(\prod_{j=1}^{n}A_{j}\right)&=\prod_{j=1}^{n}\left( \sum_{i \in \Lambda_{j}}r^{\gamma_{j}}\mu_{j}\left(f_{i}^{-1}(A_{j})\right) \right)\\
&=\sum_{(i_{1}, \dots , i_{n}) \in \Lambda} r^{\sum\gamma_{j}}\mu\left( \prod_{j=1}^{n}f_{i_{j}}^{-1}(A_{j}) \right)\\
&=\sum_{\bi \in \gamma}r^{\gamma}\mu\left(\bff_{\bi}^{-1}\left(\prod_{j=1}^{n}A_{j} \right)\right).
\end{align*}
Lastly, as proven by Mauldin and Urba\'{n}ski \cite{MU96}, since $\Lambda$ is an iterated function system satisfying the open set condition there exists some constant $c>1$ such that for any $\bx \in \cK$ and $0<r<\frac{1}{2}$
\begin{equation*}
c^{-1} \leq \frac{\mu(B(\bx, r))}{r^{\gamma}} \leq c.
\end{equation*}
Thus $\mu$ is doubling.

For an $n$-tuple of approximation functions $\Psi=(\psi_1,\dots,\psi_n)$ let
\begin{equation*}
\W_{\Int}(\cK;\Psi):=\left\{ \bx \in \cK: \left|x_{i}-\frac{p_{i}}{q} \right|<\frac{\psi_{i}(q)}{q} \quad \text{ for i.m. } \, \, \frac{\bp}{q} \in \Q^{n} \cap \cK \right\}.
\end{equation*}
That is, the set of points $\bx \in \cK$ that are $\Psi$-approximable by infinitely many rationals contained in $\cK$. Define 
\begin{equation*}
\Bad_{\Int}(\cK;\Psi):=\W_{\Int}(\cK;\Psi)\setminus \bigcap_{\ell\in \N} \W_{\Int}\left(\cK;\tfrac{1}{\ell}\Psi\right)\,.
\end{equation*}
Thus
$$ \Bad_{\cK}(\Psi)= \left\{ \bx \in \W_{\Int}(\cK;\Psi) : \exists \, c>0 \quad \forall\;\frac{\bp}{q} \in \Q^{n}\cap \cK\quad \max_{1\le i\le n}\frac{q}{\psi_{i}(q)}\left|x_{i}-\frac{p_{i}}{q}\right|\geq c\right\}.$$
\begin{theorem}\label{thm4.7}
For any $n$-tuple of approximation functions $\Psi$  we have that
\begin{equation*}
\mu(\Bad_{\Int}(\cK;\Psi))=0.
\end{equation*}
\end{theorem}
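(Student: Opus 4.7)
The plan is to identify $\Bad_{\Int}(\cK;\Psi)$ with the abstract set $\Bad_{\cR}(\Phi)$ of Section~\ref{GS} and then invoke Theorem~\ref{bad_0_general}(i). For the setup, I take $(X_j,d_j,\mu_j)=(\cK_j,|\cdot|,\mu_j)$ for each $1 \le j \le n$, where $\mu_j$ is the canonical self-similar measure on $\cK_j$; the product space is then $X = \cK$ with product measure $\mu = \prod_j \mu_j$. I index $\alpha \in \N$ by the denominator $q$ and set
\[
R_q = \{\bp/q : \bp \in \Z^n,\ \bp/q \in \cK\}, \qquad \Phi(q) = (\psi_1(q)/q,\dots,\psi_n(q)/q),
\]
replacing $\psi_i$ by $\min(\psi_i,1)$ without loss of generality so that $\phi_i(q) = \psi_i(q)/q \to 0$ as $q \to \infty$. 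With these choices $\Delta(R_q,\Phi)$ is exactly the set of $\bx \in \cK$ admitting some $\bp/q \in R_q$ with $|x_i - p_i/q| < \psi_i(q)/q$ for every $i$. Because $\cK \subset [0,1]^n$ forces each $R_q$ to be finite, the existence of infinitely many rationals $\bp/q \in \Q^n \cap \cK$ satisfying the system is equivalent to $\bx \in \Delta(R_q,\Phi)$ for infinitely many $q$. Hence $\W_{\cR}(\Phi) = \W_{\Int}(\cK;\Psi)$ and, by the definition of the two $\Bad$ sets, $\Bad_{\cR}(\Phi) = \Bad_{\Int}(\cK;\Psi)$.

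To invoke Theorem~\ref{bad_0_general}(i) I still need to verify that $\mu$ is doubling and that $R_q \subset \supp \mu$. The latter is immediate since every $\bp/q \in R_q$ lies in $\cK = \supp \mu$ by construction. For the former, the Mauldin-Urba\'nski bound recalled just above the theorem yields Ahlfors $\gamma_j$-regularity, and in particular the doubling property, for each $\mu_j$; under the sup metric, balls factor as $B(\bx,r)=\prod_j B(x_j,r)$, so a short computation shows that the product of doubling measures is doubling with constants multiplying. Theorem~\ref{bad_0_general}(i) then delivers $\mu(\Bad_{\cR}(\Phi)) = 0$, which is the desired conclusion.

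The whole proof is therefore a direct translation of the fractal problem into the abstract framework. The only items needing any attention are the doubling property of the product measure and the bookkeeping identifying the two descriptions of the limsup set; both are essentially formal once the product structure of sup-metric balls is noted. All of the genuine analytic content, namely the constant-invariance of $\W_{\cR}(\tfrac1k\Phi)$ via the $5r$-covering lemma and Fubini, is already packaged inside Lemmas~\ref{CI_balls}--\ref{CI_product} and delivered by Theorem~\ref{bad_0_general}, so no new obstacle arises beyond this verification.
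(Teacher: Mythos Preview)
Your proof is correct and follows essentially the same route as the paper: both reduce the claim to Theorem~\ref{bad_0_general}(i) by taking $X_j=\cK_j$ with the canonical self-similar measures and verifying the doubling hypothesis via Ahlfors regularity. The only cosmetic difference is in the indexing --- the paper takes $\alpha=(\bp,q)\in\Z^n\times\N$ so that each $R_\alpha$ is a singleton $\{\bp/q\}$, whereas you index by the denominator $q$ alone and let $R_q$ be the (finite) set of all rationals in $\cK$ with that denominator; both choices feed identically into Theorem~\ref{bad_0_general}(i).
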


\begin{proof}
This follows easily on applying Theorem~\ref{bad_0_general}(i) with
$$
X=\prod_{i=1}^{n}\cK_{i}\;\;(\text{\em i.e}\;\;X_i=\cK_i),\qquad \mu=\prod_{i=1}^n\mu_i,
$$
$$
R_\alpha=\bp/q\in\Q^n\cap\cK \quad\text{where }
\alpha=(\bp,q) \in \Z^{n}\times\N\,,
$$
and
$$
\Phi=(\phi_{i})_{1\le i \le n}\,,\quad\text{where }
\phi_{i}(\alpha)=\psi_{i}(q)/q\,.
$$
\end{proof}

Now let us specialise the above general result to a simplified setting of weighted Diophantine approximation. We start with a version of Dirichlet's Theorem with weights analogous to \cite[Theorem 2.1]{BFR11}, which we will prove toward the end of this subsection following methodology similar to that of \cite{FS14}.

\begin{theorem}  \label{fractal_dirichlet}
Let $\cK$ be constructed as above. Suppose that $\bt=(\tau_{1}, \dots , \tau_{n}) \in \R^{n}_{+}$ and
\begin{equation} \label{fractal_summation}
\sum_{i=1}^{n}\tau_{i}\gamma_{i}=1.
\end{equation}
Then for any $\bx=(x_{1}, \dots , x_{n}) \in \cK$ and any $Q>r$ there exists $\frac{\bp}{q}=\left(\frac{p_{1}}{q}, \dots , \frac{p_{n}}{q}\right) \in \Q^{n} \cap \cK$ with $q \leq Q$ such that
\begin{equation*}
\left|x_{i}-\frac{p_{i}}{q}\right|< \frac{C}{q(\log Q)^{\tau_{i}}}
\end{equation*}
for some constant $C>0$ independent of $\bx$.
\end{theorem}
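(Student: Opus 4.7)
The plan is to construct the rational $\bp/q$ explicitly from $\bx$ using the IFS action, following the methodology of \cite{BFR11, FS14}. First, let $K = \lfloor \log_r Q \rfloor$, so that $r^K \le Q$. Take as the common denominator $q = r^K - 1$, which satisfies $q \le Q$. Within $\cK_i$, the natural candidates for rational approximations are the fixed points of compositions $f^{(i)}_{i_1} \circ \cdots \circ f^{(i)}_{i_{k_i}}$; such a fixed point is a rational in $\cK_i$ of denominator $r^{k_i} - 1$, at distance at most $r^{-k_i}$ from any point in the corresponding level-$k_i$ cell. When $k_i \mid K$, we have $(r^{k_i} - 1) \mid (r^K - 1) = q$, so this rational can be rewritten as $p_i/q$ for an integer $p_i$, and automatically $p_i/q \in \cK_i$.

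Second, the target bound $|x_i - p_i/q| < C/(q(\log Q)^{\tau_{i}})$ translates to selecting $k_i \mid K$ for each coordinate $i$ such that $r^{K-k_i} \lesssim (\log Q)^{-\tau_{i}}$, equivalently $k_i \approx K + \tau_{i} \log_r \log Q$. Writing $k_i = K - \delta_i$, where $\delta_i$ measures the slack in the $i$-th direction, the identity $\sum_i \tau_{i} \gamma_i = 1$ ensures that the $\delta_i$ can be chosen to make the resulting rectangle of errors admissible with $q \leq Q$. The $\log Q$ factor in the target bound emerges precisely from this weight-adjusted balance between the common denominator $q$ and the per-coordinate approximation depths.

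Third, I would assemble $\bp = (p_1, \dots, p_n)$ from the coordinate-wise fixed points, note that $\bp/q \in \cK$ follows coordinate-wise from $p_i/q \in \cK_i$, and verify the weighted approximation bound holds uniformly with $C$ depending only on $r$, $n$ and the data $(\gamma_i, \tau_{i})_i$. It is essential for the verification that $r^{K-k_i}$ matches $(\log Q)^{-\tau_{i}}$ up to a multiplicative constant, and that the product of errors controls the $\mu$-scale of a weighted ball, namely $\prod_i (C/(q(\log Q)^{\tau_{i}}))^{\gamma_i} \asymp (q^\gamma \log Q)^{-1}$ via \eqref{fractal_summation}.

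The main obstacle is the simultaneous divisibility constraint: each $k_i$ must divide $K$, while also having a prescribed size controlled by $\tau_i \log_r \log Q$. Since divisors of a generic $K \approx \log_r Q$ are sparse, one may first replace $Q$ by a nearby value of the form $r^{K'} - 1$ with $K'$ highly divisible (losing only a multiplicative constant in the bound), so that sufficiently many admissible depths $k_i$ are available. Alternatively, one may work with eventually periodic rationals of the form $0.d_1 \ldots d_{K_0}\overline{e_1 \ldots e_{L}}$ with denominator $r^{K_0}(r^L - 1) \le Q$, which provide an extra $\log Q$ factor in the count of available rationals and give more flexibility. The delicate arithmetic of these choices, coupled with the weight identity $\sum_i \tau_{i}\gamma_i = 1$, is the technical heart of the argument.
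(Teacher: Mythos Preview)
Your primary construction has a direction-of-inequality error that cannot be fixed by the divisibility tricks you propose. With $q=r^K-1$ and a level-$k_i$ fixed point in coordinate $i$, the approximation error is $\asymp r^{-k_i}$. To meet the target $C/(q(\log Q)^{\tau_i})\asymp r^{-K}K^{-\tau_i}$ you would need $k_i\ge K+\tau_i\log_r\log Q$, whereas $k_i\mid K$ forces $k_i\le K$. Replacing $K$ by a highly divisible $K'$ does not help: divisors of $K'$ are still $\le K'$, so the same obstruction remains. In short, picking per-coordinate depths $k_i$ along the coding of $\bx$ can never beat the baseline error $r^{-K}\asymp 1/q$ in every coordinate simultaneously, so the extra $(\log Q)^{-\tau_i}$ factor is unattainable this way.

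The paper's proof proceeds by a different mechanism. One writes $\bx$ via its coding $\bsigma$ and considers the $N{+}1$ shifted points $\bff_{{}^{(d)}\bsigma}(\bx)$, $d=0,\dots,N$, with $N=\lfloor\log_r Q\rfloor$. A weighted fractal pigeonhole (Lemma~\ref{weighted_fractal_pigeonhole}) then produces indices $0\le u<v\le N$ with $|f^{(j)}_{{}^{(u)}\sigma_j}(x_j)-f^{(j)}_{{}^{(v)}\sigma_j}(x_j)|<d_j$ where $d_j\asymp N^{-\tau_j}$; this is exactly where the condition $\sum_j\tau_j\gamma_j=1$ is used. The approximant is the image under $f^{(j)}_{\sigma_j^{(u)}}$ of the fixed point of $f^{(j)}_{{}^{(u)}\sigma_j^{(v)}}$, yielding a rational in $\cK_j$ with the \emph{same} denominator $q=r^u(r^{v-u}-1)$ for every $j$, and the error bound $|x_j-p_j/q|<d_j/q$ follows by contracting the pigeonhole inequality. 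Your closing remark about eventually periodic rationals with denominator $r^{K_0}(r^L-1)$ is heading towards the right objects, but the essential missing idea is that $(u,v)$ cannot be chosen in advance: it must come from a pigeonhole collision among the $N{+}1$ orbit points, and this collision is what manufactures the $(\log Q)^{-\tau_i}$ gain.
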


\begin{corollary}\label{fractal_dirichlet_cor}
Let $\cK$ be constructed as above, and $\bt=(\tau_{1}, \dots , \tau_{n}) \in \R^{n}_{+}$ satisfy
\eqref{fractal_summation}. Then there exists $C>0$ such that for any $\bx=(x_{1}, \dots , x_{n}) \in \cK\setminus\Q^n$ there exist infinitely many $\frac{\bp}{q}=\left(\frac{p_{1}}{q}, \dots , \frac{p_{n}}{q}\right) \in \Q^{n} \cap \cK$ such that
\begin{equation*}
\left|x_{i}-\frac{p_{i}}{q}\right|< \frac{C}{q(\log q)^{\tau_{i}}}\qquad(1\le i\le n)\,.
\end{equation*}
\end{corollary}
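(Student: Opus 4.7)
The plan is to deduce the corollary from Theorem~\ref{fractal_dirichlet} in the standard way in which an asymptotic Dirichlet-type statement is extracted from a uniform one, with a small additional argument to convert $\log Q$ into $\log q$ and to guarantee that infinitely many approximants are produced.

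First, fix an increasing sequence $Q_k\to\infty$, for instance $Q_k=r^k$ with $Q_k>r$. For each $k$, Theorem~\ref{fractal_dirichlet} provides a rational point $\bp_k/q_k=(p_{k,1}/q_k,\dots,p_{k,n}/q_k)\in\Q^n\cap\cK$ with $1\le q_k\le Q_k$ and
\begin{equation*}
\left|x_i-\frac{p_{k,i}}{q_k}\right|<\frac{C}{q_k(\log Q_k)^{\tau_i}}\qquad(1\le i\le n).
\end{equation*}

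Next, I would show that $q_k\to\infty$ along a subsequence. If not, then the $q_k$ are bounded, and since $\bp_k/q_k\in\cK\subset[0,1]^n$ the number of possible rationals $\bp_k/q_k$ is finite; hence some fixed $\bp^\ast/q^\ast$ appears for infinitely many $k$. Pick a coordinate $i_0$ with $x_{i_0}\notin\Q$, which exists because $\bx\notin\Q^n$; since $\tau_{i_0}>0$, the displayed inequality applied to $i_0$ with $Q_k\to\infty$ forces $x_{i_0}=p^\ast_{i_0}/q^\ast$, a contradiction. Passing to a subsequence, we may therefore assume $q_k\to\infty$ and, in particular, that the rationals $\bp_k/q_k$ are eventually pairwise distinct (repetitions can only occur finitely often by the same argument applied to any fixed candidate rational).

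Finally, since $q_k\le Q_k$ and $\tau_i>0$, for all sufficiently large $k$ we have $(\log Q_k)^{\tau_i}\ge(\log q_k)^{\tau_i}\ge(\log 2)^{\tau_i}>0$, so that
\begin{equation*}
\left|x_i-\frac{p_{k,i}}{q_k}\right|<\frac{C}{q_k(\log q_k)^{\tau_i}}\qquad(1\le i\le n),
\end{equation*}
with the same constant $C$ from Theorem~\ref{fractal_dirichlet}. This yields the required infinite family of approximants and completes the proof.

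The argument is essentially routine; the only mild subtlety is handling small denominators (the case $q_k=1$, where $\log q_k=0$), which is dealt with by the pigeonhole/irrationality argument that forces $q_k\to\infty$ before one attempts to replace $\log Q_k$ by $\log q_k$.
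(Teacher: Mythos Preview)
Your proof is correct and is exactly the standard deduction the paper has in mind; the corollary is stated in the paper without proof immediately after Theorem~\ref{fractal_dirichlet}, so there is nothing to compare against beyond this routine argument. Your handling of the two minor issues---ruling out bounded denominators via the irrational coordinate $x_{i_0}$, and ensuring $q_k\ge 2$ before replacing $\log Q_k$ by $\log q_k$---is clean and complete.
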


In view of this corollary it then makes perfect sense to introduce the set $\Bad_{\Int}(\cK;\bt)$ of weighted intrinsically badly approximable points in $\cK$ to consist of all $\bx\in\cK$ such that there exists $c(\bx)>0$ such that for all
$\frac{\bp}{q}=\left(\frac{p_{1}}{q}, \dots , \frac{p_{n}}{q}\right) \in \Q^{n} \cap \cK$ we have that
\begin{equation*}
\left|x_{i}-\frac{p_{i}}{q}\right|\ge \frac{c(\bx)}{q(\log q)^{\tau_{i}}}\quad\text{for some } 1\le i\le n\,.
\end{equation*}
That is, for any $\bt$ satisfying \eqref{fractal_summation},
\begin{equation*}
    \Bad_{\Int}(\cK;\bt)=\left\{ \bx \in \cK : \exists \, c(\bx)>0 \quad \left|x_{i}-\frac{p_{i}}{q}\right|\geq c(\bx)\frac{(\log q)^{-\tau_{i}}}{q} \quad \text{for all }\frac{\bp}{q} \in \Q^{n}\cap \cK \right\}\,.
\end{equation*}

By Theorem~\ref{thm4.7} and Corollary~\ref{fractal_dirichlet_cor} we have the following

\begin{corollary}
Let $\bt=(\tau_{1}, \dots , \tau_{n}) \in \R^{n}_{+}$ satisfy
\eqref{fractal_summation}. Then
\begin{equation*}
\mu(\Bad_{\Int}(\cK;\bt))=0\,.
\end{equation*}
\end{corollary}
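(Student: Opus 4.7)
The plan is to deduce this corollary by matching the quantitative definition of $\Bad_{\Int}(\cK;\bt)$ with the abstract one from the general framework and then invoking Theorem~\ref{thm4.7} together with Corollary~\ref{fractal_dirichlet_cor}. Concretely, I would fix the $n$-tuple $\Psi=(\psi_1,\dots,\psi_n)$ defined by $\psi_i(q)=(\log q)^{-\tau_i}$ for $q\ge 2$ (with any harmless choice for $q=1$). This is precisely the approximation function tuple for which Corollary~\ref{fractal_dirichlet_cor} supplies a constant $C>0$ such that every point $\bx\in\cK\setminus\Q^n$ lies in $\W_{\Int}(\cK;C\Psi)$.

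Next I would establish the inclusion
\begin{equation*}
\Bad_{\Int}(\cK;\bt)\;\subseteq\;\Bad_{\Int}(\cK;C\Psi)\;\cup\;(\Q^n\cap\cK)\,.
\end{equation*}
Given $\bx\in\Bad_{\Int}(\cK;\bt)\setminus\Q^n$, by definition there exists $c(\bx)>0$ with $\max_{1\le i\le n}q(\log q)^{\tau_i}|x_i-p_i/q|\ge c(\bx)$ for every $\bp/q\in\Q^n\cap\cK$. Picking any integer $\ell$ with $1/\ell<c(\bx)$, this forces $\bx\notin \W_{\Int}(\cK;\tfrac{1}{\ell}\Psi)$, and hence $\bx\notin \bigcap_{\ell\in\N}\W_{\Int}(\cK;\tfrac{1}{\ell}\Psi)$. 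Combined with $\bx\in \W_{\Int}(\cK;C\Psi)$ from the Dirichlet step, and observing that scaling by the fixed constant $C$ leaves the countable intersection unchanged, i.e.\ $\bigcap_{\ell}\W_{\Int}(\cK;\tfrac{C}{\ell}\Psi)=\bigcap_{\ell}\W_{\Int}(\cK;\tfrac{1}{\ell}\Psi)$, we conclude that $\bx\in\Bad_{\Int}(\cK;C\Psi)$, as claimed.

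To finish, I would invoke Theorem~\ref{thm4.7} applied to the tuple $C\Psi$ to get $\mu(\Bad_{\Int}(\cK;C\Psi))=0$, and note that $\Q^n\cap\cK$ is countable and therefore $\mu$-null (the canonical self-similar measure $\mu$ is doubling and atomless on $\cK$ since $\gamma_i>0$ for each $i$ under the standing assumption of the theorem). The desired conclusion $\mu(\Bad_{\Int}(\cK;\bt))=0$ follows. There is no real obstacle in this argument; the only point requiring care is the bookkeeping in the second step, where the point-dependent constant $c(\bx)$ in the quantitative definition must be translated into exclusion from the countable intersection over $\ell\in\N$ appearing in the abstract definition of $\Bad_{\Int}$, which is handled simply by choosing $\ell$ large enough.
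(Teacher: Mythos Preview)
Your proposal is correct and follows essentially the same approach as the paper, which simply states the corollary as a consequence of Theorem~\ref{thm4.7} and Corollary~\ref{fractal_dirichlet_cor} without spelling out the details; you have filled in precisely the bookkeeping the paper leaves implicit. One very minor remark: your claim that $\gamma_i>0$ for each $i$ is not actually a standing assumption, but you only need $\gamma=\sum_i\gamma_i>0$ to conclude $\mu$ is atomless (via $\mu(B(\bx,r))\asymp r^\gamma$), and this follows from \eqref{fractal_summation} since $\bt\in\R^n_+$.
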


Now, let us turn to the proof of Theorem~\ref{fractal_dirichlet}. We begin with a slightly altered version of the \textit{fractal pigeonhole principle} (see \cite[Lemma 2.2]{FS14}).
\begin{lemma} \label{weighted_fractal_pigeonhole}
Let $(\bx_{j})_{j=0}^{N}$ be a finite sequence of points in $\cK$ where $\bx_{j}=(x_{j,1}, \dots , x_{j,n})$. Then there exist distinct integers $0 \leq u < v \leq N$ such that
\begin{equation*}
|x_{u,i}-x_{v,i}|<d_{i} \qquad 1 \leq i \leq n
\end{equation*}
for values $d_{i}>0$ satisfying
\begin{equation} \label{sidelenghts_condition}
\prod_{i=1}^{n}d_{i}^{\gamma_{i}} \geq \frac{C_{1}}{N}
\end{equation}
for some constant $0<C_{1}<\infty$ dependent only on $\cK$.
\end{lemma}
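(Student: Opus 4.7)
The plan is a standard pigeonhole argument exploiting the self-similar cylinder structure of $\cK=\prod_{i=1}^{n}\cK_{i}$. Given $d_{i}>0$ satisfying \eqref{sidelenghts_condition}, for each $i$ I would select the smallest non-negative integer $k_{i}$ with $r^{-k_{i}}\le d_{i}$. By the construction of $\cK_{i}$, the attractor decomposes into the union of its $(\#\Lambda_{i})^{k_{i}}=r^{\gamma_{i}k_{i}}$ level-$k_{i}$ cylinders, each of the form $f^{(i)}_{j_{1}}\circ\cdots\circ f^{(i)}_{j_{k_{i}}}(\cK_{i})$, and each contained in a closed interval of length $r^{-k_{i}}\le d_{i}$. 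The minimality of $k_{i}$ also gives $r^{k_{i}-1}\le d_{i}^{-1}$, i.e.\ $r^{\gamma_{i}k_{i}}\le r^{\gamma_{i}}d_{i}^{-\gamma_{i}}$.

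Taking products, $\cK$ is then covered by at most
$$
\prod_{i=1}^{n}r^{\gamma_{i}k_{i}}\;\le\;\prod_{i=1}^{n}r^{\gamma_{i}}d_{i}^{-\gamma_{i}}\;=\;r^{\gamma}\prod_{i=1}^{n}d_{i}^{-\gamma_{i}}\;\le\;\frac{r^{\gamma}N}{C_{1}}
$$
``product-cylinders'' $C_{1,k_{1}}\times\cdots\times C_{n,k_{n}}$, and each such product-cylinder sits inside a product of intervals of lengths $d_{1},\ldots,d_{n}$. Choosing the constant $C_{1}:=r^{\gamma}$ makes the number of product-cylinders at most $N$, so by the pigeonhole principle two of the $N+1$ given points $\bx_{0},\ldots,\bx_{N}$ must lie in a common product-cylinder, producing the required indices $0\le u<v\le N$ with $|x_{u,i}-x_{v,i}|\le d_{i}$ for every $i$.

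To upgrade ``$\le d_{i}$'' to the strict inequality ``$<d_{i}$'' required by the statement, it suffices to replace $k_{i}$ by $k_{i}+1$, at the cost of inflating $C_{1}$ by a bounded factor (e.g.\ $C_{1}=r^{2\gamma}$), which depends only on $\cK$. The only cosmetic issue is the edge case $d_{i}\ge 1$, where the per-coordinate bound $r^{\gamma_{i}k_{i}}\le r^{\gamma_{i}}d_{i}^{-\gamma_{i}}$ can degenerate; but since $\cK_{i}\subseteq[0,1]$, in that regime the coordinate $i$ imposes no constraint on distinct points and can simply be removed from the pigeonhole count. I do not foresee any substantive obstacle: the argument is just counting cylinders against a fixed number of sample points, and the dimension identity $\sum_{i}\gamma_{i}=\gamma$ makes the bookkeeping work out cleanly.
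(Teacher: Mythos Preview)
Your argument is correct and is a genuinely different route from the paper's. The paper argues the contrapositive measure-theoretically: assuming no two of the $N+1$ points lie in a common box $\prod_{i}B(x_{j,i},d_{i}/2)$, these boxes are disjoint, and the Ahlfors regularity of $\mu$ (quoted from Mauldin--Urba\'nski) gives $\mu_{i}(B(x_{j,i},d_{i}/2))\asymp d_{i}^{\gamma_{i}}$, so summing yields $1=\mu(\cK)\gtrsim N\prod_{i}d_{i}^{\gamma_{i}}$, forcing $\prod_{i}d_{i}^{\gamma_{i}}<C_{1}/N$.

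Your approach bypasses Ahlfors regularity entirely by exploiting the explicit cylinder structure: you cover $\cK_{i}$ by its $(\#\Lambda_{i})^{k_{i}}=r^{\gamma_{i}k_{i}}$ level-$k_{i}$ cylinders and pigeonhole directly. This is more elementary---it needs nothing beyond the definition of the IFS and the identity $\dim\cK_{i}=\log(\#\Lambda_{i})/\log r$---whereas the paper's argument, while shorter to write, imports a nontrivial regularity statement. Conversely, the paper's method would transfer verbatim to any product of Ahlfors regular sets, not just self-similar IFS attractors with a common ratio $r$. Your handling of the strict inequality (passing to level $k_{i}+1$) and the edge case $d_{i}\ge 1$ is fine; the constant $C_{1}=r^{2\gamma}$ that results is explicit, which the paper's $\asymp$-argument does not provide.
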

\begin{proof}
We prove the contrapositive to Lemma~\ref{weighted_fractal_pigeonhole}. Let the sequence $(\bx_{j})_{j=1}^{N}$ be a $\bd=(d_{1}, \dots , d_{n})$-separated sequence i.e. for each $\bx_{j}=(x_{j,1}, \dots , x_{j,n})$ in the sequence we have that $|x_{u,i}-x_{v,i}| \geq d_{i}$ for all $1 \leq i \leq n$ and all $0 \leq u < v \leq N$. Then the collection of rectangles
\begin{equation*}
\Delta\left(\bx_{j}, \frac{\bd}{2} \right)=\prod_{i=1}^{n}B\left( x_{j,i},\frac{d_{i}}{2} \right),
\end{equation*}
with centres in $(\bx_{j})_{j=1}^{N}$, are disjoint. Thus
\begin{align*}
1=\mu(\cK) & \geq \sum_{j=1}^{N}\mu\left(\Delta\left(\bx_{j}, \frac{\bd}{2} \right) \right) \\
&= \sum_{j=1}^{N} \prod_{i=1}^{n}\mu_{i}\left( B\left(x_{j,i}, \frac{d_{i}}{2} \right) \right) \\
& \asymp \sum_{j=1}^{N}\prod_{i=1}^{n}\left(\frac{d_{i}}{2}\right)^{\gamma_{i}} \\
& \asymp N \prod_{i=1}^{n}d_{i}^{\gamma_{i}}.
\end{align*}
Thus we are forced to conclude there exists constant $C_{1}< \infty$ such that
\begin{equation*}
 N\prod_{i=1}^{n}d_{i}^{\gamma_{i}} < C_{1}.
 \end{equation*}
 \end{proof}

\begin{proof}[Proof of Theorem~\ref{fractal_dirichlet}]
Let $\Lambda^{\N}$ be the collection of all possible sequences of $\bi \in \Lambda$, both infinite and finite. Define $\Lambda^{\N}_{j}$ similarly for each $1 \leq j \leq n$. We can write each $\bsigma \in \Lambda^{\N}$ in two key forms
\begin{equation*}
\bsigma=(\sigma_{1}, \dots , \sigma_{n})=(\bsigma_{1}, \bsigma_{2}, \dots ),
\end{equation*}
where the first sequence is a finite collection of possibly infinite words with each $\sigma_{j}$ belonging to $\Lambda_{j}^{\N}$ respectively. The second form is a possibly infinite collection of finite words $\bsigma_{i}$ each belonging to $\Lambda$. For a sequence $\sigma_{j}=(\sigma_{j,1}, \sigma_{j,2}, \dots ) \in \Lambda^{\N}_{j}$ and integers $r<s$ let
\begin{equation*}
{}^{(r)}\sigma_{j}=(\sigma_{j,r+1}, \sigma_{j, r+2}, \dots), \quad \sigma_{j}^{(s)}=(\sigma_{j,1}, \dots , \sigma_{j,s} ), \quad {}^{(r)}\sigma_{j}^{(s)}=(\sigma_{j,r+1}, \dots , \sigma_{j,s}).
\end{equation*}
If $\sigma_{j}$ is of finite length $t$ and $r>t$ then let ${}^{(r)}\sigma_{j}=0$ where $f_{0}^{(j)}(x)=x$ is the identity map. Adopt similar notation for $\bsigma \in \Lambda^{\N}$, that is
\begin{equation*}
{}^{(r)}\bsigma^{(s)}=(\bsigma_{r+1}, \dots , \bsigma_{s}).
\end{equation*}
Observe that also
\begin{equation*}
{}^{(r)}\bsigma^{(s)}=({}^{(r)}\sigma_{1}^{(s)}, \dots , {}^{(r)}\sigma_{n}^{(s)}).
\end{equation*}
We recall the following functions as defined in \cite{FS14}:
 \begin{itemize}
 \item Define the \textit{shift map} $\omega:\Lambda^{\N} \to \Lambda^{\N}$ by $\omega((\bsigma_{1}, \bsigma_{2}, \dots) )= (\bsigma_{2}, \dots )$, or equivalently $\omega(\bsigma)={}^{(1)}\bsigma$.
 \item Define the \textit{limit point} of a sequence $\bsigma \in \Lambda^{\N}$ as the unique point $\bx \in \cK$ such that
\begin{equation*}
 \lim_{s \to \infty} \bff_{\bsigma^{(s)}}(\bx)=\bx.
 \end{equation*}
 \end{itemize}
 We now prove Theorem~\ref{fractal_dirichlet}. Take $\bx=(x_{1}, \dots x_{n}) \in \cK$ and let $\bsigma \in \Lambda^{\N}$ be the sequence with limit point $\bx$. Fix some $N \in \N$ to be determined later and consider the sequence of points
 \begin{equation*}
 \left\{\bff_{{}^{(d)}\bsigma}(\bx) \right\}_{d=0}^{N}.
 \end{equation*}
 Choose
 \begin{equation*}
 d_{j}=C_{1}^{1/n}N^{-\tau_{j}} \qquad 1 \leq j \leq n
 \end{equation*}
 for some weight vector $\bt=(\tau_{1}, \dots , \tau_{n}) \in \R^{n}_{+}$ satisfying
 \begin{equation*}
 \sum_{j=1}^{n}\tau_{j}\gamma_{j}=1.
 \end{equation*}
 Such choices of $d_{j}$ means that Lemma~\ref{weighted_fractal_pigeonhole} is applicable, so we conclude there exists two integers $0 \leq u < v\leq N$ such that
 \begin{equation*}
 \left|f_{{}^{(u)}\sigma_{j}}^{(j)}(x_{j})-f_{{}^{(v)}\sigma_{j}}^{(j)}(x_{j}) \right|<d_{j} \qquad 1 \leq j \leq n.
 \end{equation*}
The following calculations are essentially the same as those in \cite{FS14}, we include them here for completeness. For now fix some $1 \leq j \leq n$. Observe that
\begin{equation*}
f^{(j)}_{\sigma_{j}^{(u)}} \left( f^{(j)}_{{}^{(u)}\sigma_{j}}(x_{j}) \right)=x_{j}, \qquad f^{(j)}_{{}^{(u)}\sigma_{j}^{(v)}}\left(f^{(j)}_{{}^{(v)}\sigma_{j}}(x_{j})\right)=f^{(j)}_{{}^{(u)}\sigma_{j}}(x_{j}),
\end{equation*}
and that
\begin{align*}
f^{(j)}_{\sigma_{j}^{(u)}}(x_{j})&=\frac{x_{j}+\sum_{i=1}^{u}a_{\sigma_{j,i}}r^{u-i}}{r^{u}}, \\
f^{(j)}_{{}^{(u)}\sigma_{j}^{(v)}}(x_{j})&=\frac{x_{j}+\sum_{i=u+1}^{v}a_{\sigma_{j,i}}r^{v-i}}{r^{v-u}}.
\end{align*}
Note that $f^{(j)}_{{}^{(u)}\sigma_{j}^{(v)}}$ has a unique fixed point, furthermore, it is a rational point of the form
\begin{equation*}
\frac{p_{j}'}{q'}=\frac{\sum_{i=u+1}^{v}a_{\sigma_{j,i}}r^{v-i}}{r^{v-u}-1}.
\end{equation*}
Let
\begin{equation*}
\frac{p_{j}}{q}=f^{(j)}_{\sigma_{j}^{(u)}}\left(\frac{p_{j}'}{q'}\right)=\frac{(r^{v-u}-1)\sum_{i=1}^{u}a_{\sigma_{j,i}}r^{u-i} + \sum_{i=u+1}^{v}a_{\sigma_{j,i}}r^{v-i}}{r^{u}(r^{v-u}-1)}.
\end{equation*}
Note that $\frac{p_{j}}{q} \in \cK_{j}$, since $f^{(j)}_{\sigma_{j}^{(u)}} \left( f^{(j)^{m}}_{{}^{(u)}\sigma_{j}^{(v)}}\left(\frac{p_{j}'}{q'}\right)\right)=\frac{p_{j}}{q}$ for all $m \in \N$, and that $q<r^{v}$. To bound the distance between $x_{j}$ and $\frac{p_{j}}{q}$ observe that
\begin{align*}
\left| f^{(j)}_{{}^{(u)}\sigma_{j}}(x_{j})-\frac{p_{j}'}{q'}\right|&=\left| f^{(j)}_{{}^{(u)}\sigma_{j}^{(v)}}\left(f^{(j)}_{{}^{(u)}\sigma_{j}}(x_{j})\right) - f^{(j)}_{{}^{(u)}\sigma_{j}^{(v)}}\left(\frac{p_{j}'}{q'}\right) \right|\\
&=\frac{1}{r^{v-u}}\left| f^{(j)}_{{}^{(v)}\sigma_{j}}(x_{j})-\frac{p_{j}'}{q'}\right|, \\
& \leq \frac{1}{r^{v-u}}\left( \left| f^{(j)}_{{}^{(u)}\sigma_{j}}(x_{j})-\frac{p_{j}'}{q'} \right| + \left| f^{(j)}_{{}^{(v)}\sigma_{j}}(x_{j})- f^{(j)}_{{}^{(u)}\sigma_{j}}(x_{j})\right| \right),
\end{align*}
which can be evaluated to
\begin{equation*}
\left| f^{(j)}_{{}^{(u)}\sigma_{j}}(x_{j})-\frac{p_{j}'}{q'}\right| \leq \frac{1}{r^{v-u}-1}d_{j}.
\end{equation*}
Applying $f^{(j)}_{\sigma_{j}^{(u)}}$ to the left hand side we get that
\begin{equation*}
\left| x_{j}-\frac{p_{j}}{q}\right|=\frac{1}{r^{u}}\left| f^{(j)}_{{}^{(u)}\sigma_{j}}(x_{j})-\frac{p_{j}'}{q'}\right| <\frac{1}{r^{u}(r^{v-u}-1)}d_{j}=\frac{d_{j}}{q}.
\end{equation*}
Observe by the earlier remark that $q<r^{v}<r^{N}$, so if we choose $N=\lfloor \log_{r}Q \rfloor$, then $q \leq Q$, and
\begin{equation*}
d_{j} =C_{1}^{1/n}N^{-\tau_{j}}\leq C_{1}^{1/n}C_{2}\log(Q)^{-\tau_{j}}
\end{equation*}
for some sufficiently large $C_{2}$. Taking $C=C_{1}^{1/n}C_{2}$ completes the proof along the $j^{th}$ axis. Lastly, note that the above calculation can be completed for each $1\leq j \leq n$, and furthermore that $q$ remains the same in each case since $q$ is dependent only on $r, u$ and $v$.
\end{proof}

\section{Application III: Bad on manifolds}\label{S_bad}

In this section we provide the statements of main results on badly approximable points on manifolds. We will restrict ourselves to weighted  simultaneous approximations and will consider both real and $p$-adic cases, and will also delve into the $S$-arithmetic case. 

{\em Notation.} We will use slightly simplified  notation introduced in \S \ref{S-arith}. Since $m=1$ for the rest of the paper, we will write $W^{(S)}_{n}(\Psi)$ and $\Bad^{(S)}_{n}(\Psi)$ instead of $W_{S,1,n}(\Psi)$ and $\Bad_{S,1,n}(\Psi)$ respectively. Furthermore, in the real case, that is when $S=\{\infty\}$ we will write $W_{n}(\Psi)$ and $\Bad_{n}(\Psi)$ for these sets, and when $S=\{p\}$ consists of one prime, we will write $W^{(p)}_{n}(\Psi)$ and $\Bad^{(p)}_{n}(\Psi)$ for these sets.
When $\Psi=(\psi_{,i,\nu})$ are of the form $\psi_{i,\nu}(q)=q^{-\tau_{i,\nu}}$ for some $\bm\tau=(\tau_{i\nu})_{1\le i\le n,\nu\in S}$, we will write $\Bad^{(S)}_{n}(\bm\tau)$ instead of $\Bad^{(S)}_{n}(\Psi)$, etc. If $S$ contains only 1 element, then we write $\tau_i$ instead of $\tau_{i,\nu}$.

The weights $\bm\tau$ will always be subject to the following conditions, which are \eqref{S_arithmetic_tau_condition} and \eqref{vb044} rewritten for the setup of this section ($m=1$, $\bm\eta=(1,\dots,1)$):
\begin{equation}\label{tau_standard}\sum_{i=1}^n\sum_{\nu\in S}\tau_{i,\nu}=\left\{\begin{array}{cl}
n+1 & \quad\text{if } \infty\notin S,\\
1 & \quad \text{if } \infty \in S\,,
    \end{array}\right.
    \end{equation}
\begin{equation}\label{vb044C}
\qquad\qquad\sum_{\nu\in S}\tau_{i,\nu}>1\qquad\text{for all $1\le i\le n$},\qquad\text{if }\infty\not\in S\,.
\end{equation}

In what follows, $\bff=(\bff_\nu)_{\nu\in S}$ will stand for a collection of maps used to parameterise a manifold in the $S$-arithmetic case. Specifically, $\bff_\nu=(f_{1,\nu},\cdots,f_{m,\nu}):\U_\nu \to \Z^{m}_\nu$ is a $C^2$ map defined on an open subset $\U_\nu\subseteq \Z^{d}_\nu$. We let $\cU=\prod_{\nu\in S} \cU_\nu$, and write $\bx\in\cU$ as $(\bx_\nu)_{\nu\in S}$, where $\bx_\nu\in \cU_\nu$. We write $\bff(\bx)$ for $(\bff_\nu(\bx_\nu))_{\nu\in S}$.
For $\bx \in \U$ let $\ff_\nu(\bx)=(\bx_\nu, \bff_\nu(\bx_\nu))$, and $\ff(\bx):=(\ff_\nu(\bx_\nu))$. Thus $\ff_\nu$ can be viewed as a parameterisation of $\mathcal{M}_\nu=\ff_\nu(\U_\nu)$, and $\ff$ can be viewed as a parameterisation of $\mathcal{M}=\prod_{\nu\in S}\mathcal{M}_\nu$. Here for $\nu=\infty$, $\Z_\nu=(0,1)$.

Our first theorem, which proof will be given in \S\ref{proofR}, concerns the totally real case.

\begin{theorem}\label{bad_realmani}
    Let $\bff=(f_{1}, \dots , f_{m}):\U\subset\R^d \to \R^{m}$ be a $C^2$ map defined on an open subset $\U$, and, as before, $\ff(\bx):=(\bx,\bff(\bx))$. Let $\bt=(\tau_{1}, \dots, \tau_{n}) \in \R^{n}_{+}$ satisfy 
    \begin{equation}\label{maxlessmin} \sum_{i=1}^{n} \tau_{i}=1, \quad \text{ and } \quad \min_{1 \leq i \leq d} \tau_{i} \geq \max_{1 \leq j \leq m} \tau_{d+j}.
    \end{equation} Then
\begin{equation*}
	\mu_{d}\left( \ff^{-1}(\Bad_{n}(\bt)) \right)=0.
	\end{equation*}
\end{theorem}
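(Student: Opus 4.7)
}
The plan is to realise $\ff^{-1}(\Bad_{n}(\bt))$ (up to a null set) as the abstract Bad set $\Bad_{\cR}(\Phi)$ from \S\ref{GS} for a carefully chosen pair $(\cR,\Phi)$ over $X=\U\subset\R^d$ with $\mu=\mu_d$ the (doubling) Lebesgue measure, and then invoke Theorem~\ref{bad_0_general}(i). Concretely, I would index $\alpha=(\bp,q)\in\Z^n\times\N$ with $\bp=(\bp_d,\bp_m)$, $\bp_d\in\Z^d$, $\bp_m\in\Z^m$, and take
\begin{equation*}
\Phi(\alpha)=(q^{-1-\tau_{1}},\dots,q^{-1-\tau_{d}}),
\qquad
R_{\alpha}=\{\bp_d/q\}
\end{equation*}
whenever $\bp_d/q\in\U$ and $(\bp,q)$ satisfies a ``manifold closeness'' condition of the form $|f_{j}(\bp_d/q)-p_{d+j}/q|\le c_{0}\,q^{-1-\tau_{d+j}}$ for all $j=1,\dots,m$, and $R_{\alpha}=\varnothing$ otherwise. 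Since every non-empty $R_\alpha$ is a single point in $\U=\supp\mu_d$, Theorem~\ref{bad_0_general}(i) applies and gives $\mu_d(\Bad_{\cR}(\Phi))=0$ for each such $c_0$.

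The central geometric ingredient is a Taylor/mean-value comparison. If $L$ denotes a uniform bound for $\|D\bff\|$ on a small coordinate box (which we may localise to by a standard partition argument), then for $\by$ in the coordinate box $\prod_i(p_i/q-cq^{-1-\tau_i},p_i/q+cq^{-1-\tau_i})$ we have $|f_j(\by)-f_j(\bp_d/q)|\le Lc\,q^{-1-\min_{i\le d}\tau_i}$, and the hypothesis \eqref{maxlessmin} precisely converts $q^{-1-\min_{i\le d}\tau_i}\le q^{-1-\tau_{d+j}}$. This comparison is what lets a box-neighborhood approximation of $\bp_d/q$ in the parameter $\by$ automatically produce a weighted approximation of $\ff(\by)$ by $\bp/q$ in $\R^n$, at the cost of a multiplicative constant depending only on $L$.

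Using this comparison in both directions I would prove the inclusion $\ff^{-1}(\Bad_n(\bt))\subset\Bad_{\cR}(\Phi)$ modulo a null set. The ``$\in\W_\cR(\Phi)$'' half follows from standard (weighted) Dirichlet in $\R^n$ applied to $\bx=\ff(\by)$: the resulting $(\bp,q)$ automatically satisfies the manifold-closeness defining $\cR$ (with $c_0$ depending on $L$), by a triangle inequality with the Taylor estimate. The ``$\notin\bigcap_k\W_\cR(\Phi/k)$'' half is the crux: if $\bx\in\Bad_n(\bt)$ with constant $c(\bx)>0$, then any $(\bp,q)\in\cR$ with $|y_i-p_i/q|<(1/k)q^{-1-\tau_i}$ on $i\le d$ would force, via the Taylor bound and manifold closeness, $\max_i q^{\tau_i}|qx_i-p_i|\le c_0+L/k$; for $c_0$ and $1/k$ both smaller than $c(\bx)$ this contradicts $\bx\in\Bad_n(\bt)$.

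The main obstacle is precisely that the $c_0$ in the definition of $\cR$ is fixed while $c(\bx)$ is an arbitrary positive number depending on $\by$. I would handle this by running the argument not for a single $\cR$ but for a countable family $\cR^{(j)}$ corresponding to $c_0=1/j$, writing
\begin{equation*}
\ff^{-1}(\Bad_{n}(\bt))\;\subset\;\bigcup_{j\in\N}\Bad_{\cR^{(j)}}(\Phi)\qquad(\text{mod null}),
\end{equation*}
and applying Theorem~\ref{bad_0_general}(i) to each $\cR^{(j)}$ separately. The non-trivial input needed to make this union argument honest is that every $\by\in\ff^{-1}(\Bad_n(\bt))$ actually lies in some $\W_{\cR^{(j)}}(\Phi)$, which is essentially a divergence Khintchine-type statement for the restricted counting system; this can be obtained by a Borel--Cantelli computation using that under \eqref{maxlessmin} the expected number of admissible $(\bp,q)\in\cR^{(j)}$ producing a box around $\by$ at scale $q$ is of order $(1/j)^m q^{-1}$, whose sum in $q$ diverges. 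Alternatively, one may avoid the Khintchine input entirely by working instead with the analogue of the ``unrestricted'' $(\cR,\Phi)$ together with a fibering/Fubini reduction, which is the direction taken by Lemmas~\ref{CI_general}--\ref{CI_product}; in either case the hypothesis \eqref{maxlessmin} is what makes all the Taylor-based bookkeeping close up.
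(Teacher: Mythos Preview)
Your overall packaging---building resonant sets $R_\alpha$ in $\U\subset\R^d$ from rational points close to the manifold and invoking Theorem~\ref{bad_0_general}(i)---is close in spirit to the paper's route, and your Taylor comparison under \eqref{maxlessmin} is exactly the right geometric input. But there is a real gap precisely where you flag it. To push $\by\in\ff^{-1}(\Bad_n(\bt))$ into some $\Bad_{\cR^{(j)}}(\Phi)$ you must take $j$ with $1/j$ smaller than the unknown Bad constant $c(\bx)$, and for \emph{that} $j$ you then need $\by\in\W_{\cR^{(j)}}(\Phi)$. Plain weighted Dirichlet in $\R^n$ only places $\by$ in $\W_{\cR^{(j_0)}}(\Phi)$ for a \emph{fixed} $j_0$ determined by $L$; it says nothing about arbitrarily small $1/j$. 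Your Borel--Cantelli suggestion does not close this: divergence of $\sum_q(1/j)^m q^{-1}$ is useless without a quasi-independence estimate, and establishing that is essentially a Khintchine theorem on the manifold---much harder than what you are trying to prove. The ``alternatively'' clause is too vague to substitute; Lemma~\ref{CI_product} only varies constants in the $d$ coordinate directions of $\U$ and cannot by itself shrink the manifold-closeness threshold~$c_0$.

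The paper fills this gap without any Khintchine input. Rather than applying Dirichlet to $\ff(\by)$ in $\R^n$, it applies Minkowski to the $n+1$ linear forms $b_0x_i-b_i$ ($1\le i\le d$) and $b_0f_j(\bx)-\sum_i\partial_if_j(\bx)(b_0x_i-b_i)-b_{d+j}$ ($1\le j\le m$) together with $|b_0|\le H$, inserting \emph{free} constants $e^{-c_j}$ on the second group. Choosing the $c_j$ large makes the eventual manifold-closeness bound (your $c_0$) as small as desired \emph{directly from Dirichlet}, at the price of enlarging the constants in the $d$ box directions; those enlarged constants are then removed by a single application of constant invariance for balls in $\R^d$ (Corollary~\ref{CI_balls_corollary}). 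This gives Corollary~\ref{coldiri+R}: for every $\delta_1,\dots,\delta_n>0$, almost every $\bx\in\U$ has infinitely many solutions to the full system with those constants. Via the sandwich of Proposition~\ref{limsup_F-1} one obtains $\mu_d(\ff^{-1}(\W_{1,n}(\delta;\bt)))=\mu_d(\U)$ for all $\delta>0$, and the theorem follows immediately. In a phrase: build the arbitrary smallness of $c_0$ into the Dirichlet step via Minkowski with weighted constants, rather than trying to recover it afterwards by a divergence argument.
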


\begin{remark}\label{rem8.2}
Note that the right had side of  \eqref{maxlessmin} is a parametrisation specific condition which is generally not required, since, by using the Inverse Functions Theorem, one can choose a different parametrisation of the manifold with the parameter space $\cU$ chosen within the $d$ coordinates that correspond to the $d$ maximal weights. This requires an obvious condition on the tangent plane to the manifold and leads to Theorem~\ref{main1}.
\end{remark}

\begin{remark}
Theorem~\ref{bad_realmani} includes the case of affine subspaces, that is the case of affine maps $\bff$. Recently, Huang \cite{Huang22} showed that an affine subspace $\mathcal{L}$ is Khintchine type if and only if $\omega(A)<n$, where $A$ a matrix associated with the parametrisation of  $\mathcal{L}$ and $\omega(A)$ is its Diophantine exponent. In partciular, Huang's theorem implies that the set of badly approximable points on such  affine subspaces is null. Our Theorem~\ref{bad_realmani} shows that for any affine subspace, the set of badly approximable points on this subspace is null, and extends this to the weighted case. One can easily obtain  examples of affine subspaces that are not of Khintchine type but on which the set of (weighted) badly approximable points is null.
\end{remark}

Our second type of applications for Bad on manifolds concerns the $p$-adic setting. The following theorem will be obtained as a consequence of a more general result for $S$-arithmetic approximations, namely  Corollary \ref{bad_manifold_S-adic}.

\begin{theorem} \label{bad_manifold_p-adic}
Let $\bff:\U \to \Zp^{m}$ be a $C^2$ map defined on an open subset $\U\subseteq \Zp^{d}$, and for $\bx \in \U$ let $\ff(\bx)=(\bx, \bff(\bx))$. Suppose that $\bt=(\tau_{1}, \dots , \tau_{n}) \in \R^{n}_{+}$ with each $\tau_{i}>1$,
 \begin{equation}\label{tau_p}
 \sum_{i=1}^{n} \tau_{i}=n+1, \quad \text{ and } \quad \min_{1 \leq i \leq d} \tau_{i} \geq \max_{1 \leq j \leq m} \tau_{d+j}.
 \end{equation}
 Then
\begin{equation*}
\mu_{d}\left( \ff^{-1}(\Bad^{(p)}_{n}(\bt)) \right)=0.
\end{equation*}
\end{theorem}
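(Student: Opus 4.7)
The strategy is to reduce the problem to an application of Theorem~\ref{bad_0_general}(i) on the parameter space $\U \subset \Z_p^d$, equipped with the Haar measure $\mu_d$, which is doubling as a product of the doubling measures $\mu_p$. Since $\bff$ is $C^2$, I would first decompose $\U$ into countably many pieces on each of which $\bff$ has a uniformly bounded $p$-adic Lipschitz constant $L$; it then suffices to prove the statement on each piece, so we may assume such an $L$ exists globally on $\U$.

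For each $\alpha = (\bp, q) \in \Z^n \times \N$, let $P_\alpha \subset \U$ denote the pullback of the $\alpha$-th approximation region under $\ff$: the set of $\bx \in \U$ with $|x_i - p_i/q|_p \le |q|_p^{-1}q^{-\tau_i}$ for $1 \le i \le d$ and $|f_j(\bx) - p_{d+j}/q|_p \le |q|_p^{-1}q^{-\tau_{d+j}}$ for $1 \le j \le m$, so that $\ff^{-1}(W_n^{(p)}(\bt)) = \limsup_\alpha P_\alpha$ with analogous identities for the scaled regions $P_\alpha^{1/k}$. When $P_\alpha \ne \varnothing$, pick a representative $\bx_\alpha \in P_\alpha$ and set $R_\alpha = \{\bx_\alpha\} \subset \U \subset \supp\mu_d$; otherwise set $R_\alpha = \varnothing$. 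Define $\Phi(\alpha) = (|q|_p^{-1}q^{-\tau_i})_{1\le i\le d}$; each $\phi_i(\alpha) \to 0$ as $q \to \infty$ since $\tau_i > 1$.

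The technical heart of the argument is the sandwich
\begin{equation*}
\Delta(R_\alpha, c\Phi) \;\subset\; P_\alpha \;\subset\; \Delta(R_\alpha, \Phi),
\end{equation*}
valid for a fixed $c = 1/L$, together with analogous sandwiches at every finer scale $1/k$. The upper inclusion is immediate from the $p$-adic ultrametric property. For the lower inclusion, if $\bx \in \Delta(R_\alpha, c\Phi)$ then $|x_i - x_{\alpha,i}|_p < c\phi_i(\alpha)$ for $i \le d$, and by the Lipschitz bound together with the weight condition $\min_{i\le d}\tau_i \ge \max_{j\le m}\tau_{d+j}$,
\begin{equation*}
|f_j(\bx) - p_{d+j}/q|_p \;\le\; \max\bigl(L c\,|q|_p^{-1}q^{-\min_i\tau_i},\; |q|_p^{-1}q^{-\tau_{d+j}}\bigr) \;\le\; |q|_p^{-1}q^{-\tau_{d+j}},
\end{equation*}
so that $\bx \in P_\alpha$. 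Taking limsups and combining across scales yields $\ff^{-1}(\Bad_n^{(p)}(\bt)) \subset \Bad_\cR(\Phi)$, and then Theorem~\ref{bad_0_general}(i) gives $\mu_d(\Bad_\cR(\Phi)) = 0$, as desired.

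The main obstacle is propagating the sandwich uniformly across all scales, i.e.\ establishing $\Delta(R_\alpha, (c/k)\Phi) \subset P_\alpha^{1/k}$ for every $k$: if $\bx_\alpha$ is an arbitrary point of $P_\alpha$, its distance to $\bp/q$ can be as large as $\phi_i(\alpha)$, so simply shrinking the $\Phi$-neighborhood by a factor $1/k$ does not force the refined condition $|x_i - p_i/q|_p < (1/k)\phi_i(\alpha)$. To handle this, $\bx_\alpha$ should be chosen as close to $\bp/q$ as possible---for instance, $\bx_\alpha = \bp_{\le d}/q$ whenever this point lies in $\U$, which makes the first $d$ coordinates of $\bx_\alpha$ match $\bp/q$ exactly, so that the first-$d$-coordinate constraint in $P_\alpha^{1/k}$ is captured verbatim by $\Delta(R_\alpha, (c/k)\Phi)$. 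For the remaining $\alpha$ (those with $\bp_{\le d}/q \notin \U$ or with $\bp/q$ far from $\ff(\U)$ in the $f_j$ directions), one restricts to a controlled sub-collection of approximants and bounds the residual contribution separately; the more general $S$-arithmetic machinery of Corollary~\ref{bad_manifold_S-adic} is designed precisely to streamline this residual analysis and deduce Theorem~\ref{bad_manifold_p-adic} as the single-place specialisation.
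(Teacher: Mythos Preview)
Your instinct to work on the parameter space $\U\subset\Z_p^d$ and to centre the resonant sets at the rational projections $\bp_{\le d}/q$ is exactly the set-up the paper uses (compare your $R_\alpha$ with the balls $B_{\tilde\bb}$ and the index set $S_{\bt}$ in \eqref{eqn047}--\eqref{eqn048B}). However, the sandwich you propose does not propagate to scale $1/k$, even after the fix $\bx_\alpha=\bp_{\le d}/q$. While the first $d$ coordinates then match exactly, the inclusion $\Delta(R_\alpha,(c/k)\Phi)\subset P_\alpha^{1/k}$ still requires
\[
|f_j(\bx_\alpha)-p_{d+j}/q|_p\;\le\;\tfrac{1}{k}\,|q|_p^{-1}q^{-\tau_{d+j}},
\]
and this quantity is a \emph{fixed} number depending only on $\alpha$: for a generic approximant it is of order $|q|_p^{-1}q^{-\tau_{d+j}}$ and does not shrink with $k$. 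Hence the only $\alpha$ for which your sandwich holds at every scale are those with $f_j(\bp_{\le d}/q)=p_{d+j}/q$, i.e.\ rational points on the manifold, and there is no a priori reason for these to be abundant. Consequently the claimed inclusion $\ff^{-1}(\Bad_n^{(p)}(\bt))\subset\Bad_{\cR}(\Phi)$ is unjustified; indeed, membership in $\W_{\cR}(\Phi)$ already presupposes a Dirichlet-type theorem on the manifold that you have not supplied.

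This is where the paper's route genuinely differs. It does not try to run a single sandwich uniformly in all $n$ directions. Instead it \emph{decouples} the $f_j$-constraints from the $x_i$-constraints: for each choice of $\delta_{d+1},\dots,\delta_n$ it restricts the index set to $S_{\bt}(\delta_{d+1},\dots,\delta_n)$ and proves, via a Minkowski argument on the tangent plane (Theorem~\ref{diri}, Case~1; Claim~C forces $|b_0|_p=1$ when $S=\{p\}$ and all $\tau_i>1$), that the limsup of the balls $B_{\tilde\bb}$ has full measure for \emph{some} large radius. The constant-invariance machinery is then applied \emph{only in the $d$ free coordinates} to upgrade this to arbitrary $\delta_1,\dots,\delta_d$ (Corollary~\ref{coldiri+2}), and Proposition~\ref{limsup_F-1} translates this back to $\mu_d\big(\ff^{-1}(\W^{(p)}_{n}(\delta;\bt))\big)=\mu_d(\U)$ for every $\delta>0$. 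Your final remark is therefore right in outcome---the $p$-adic case is the single-place specialisation of Corollary~\ref{bad_manifold_S-adic}---but what that machinery contributes is not the disposal of exceptional $\alpha$: it is the Dirichlet-on-the-manifold step that replaces your non-scaling sandwich altogether.
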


\medskip

More generally, we consider the $S$-arithmetic case, where $S$ is a finite set that contains more than one prime. In this latter setting the proofs of our results are significantly more complex and in fact the manifolds in question are required to satisfy more restrictive assumptions, namely assumptions on their Diophantine exponents. Consequently, prior to stating our $S$-arithmetic results we give some auxiliary definitions. Although our $S$-arithmetic results will deal with the case $\infty\not\in S$, we introduce definitions in full generality for completeness.

\begin{definition}
    Let $\bt=\bt_\nu=(\tau_{i,\nu}), \tau_{i,\nu}>0 ~\forall \nu\in S, i=1,\cdots,n$, that satisfies Equation \eqref{tau_standard}.
    For any $\by\in \Q_S^n$, we define \begin{equation}
        \omega_{\bt}(\by):=\sup\left\{\omega~:~\max_{i,\nu}\vert b_0 y_{i,\nu}+b_i\vert_\nu^{1/\tau_{i,\nu}}\leq \left\{\begin{aligned}&\frac{1}{\Vert \tilde\bb\Vert_\infty^{\omega}}, \infty\notin S\\
                        & \frac{1}{\vert b_0\vert_\infty^{\omega}}, \infty\in S\end{aligned}\right.
        \text{ for i.m. $\tilde\bb=(b_0,\bb)\in\Z^{n+1}$}\right\}.
    \end{equation}
\end{definition}
\noindent Here, and elsewhere, i.m. stands for `infinitely many'.

\medskip

\begin{definition}\label{exponent_tau}
    Let $S$ be a finite set of valuations that may or may not contain $\infty$. Suppose $n=m+d$, and  $\bt$ be as in \eqref{tau_standard}. We define the Diophantine exponent of the manifold $\cM$, parameterised by a map $\ff$ as described above, relative to the weights $\bm\tau$ as follows
    \begin{equation}\label{exponent_manifolds}
        \omega_{\bt}(\mathcal{M}):=\sup\{\omega ~:~\omega_{\bt}(\ff(\bx))\leq \omega \text{ for } \mu_{S,d} \text{ almost every } \bx\in\U\}.
    \end{equation} 
\end{definition}
\noindent Here $\mu_{S,d}$ is the Haar measure that was defined in \S \ref{S-arith}.

\medskip

\begin{definition}\label{def8.7}
     We call $\mathcal{M}$ to be {\em $\bt$-extremal} if $\omega_{\bt}(\mathcal{M})=1.$ Moreover, we call  $\mathcal{M}$ to be {\em strongly extremal} if it is $\bt$-extremal for any $\bt$ satisfying Equation \eqref{tau_standard}.
\end{definition}

\begin{remark}
Strong extremality is usually associated with the so-called multiplicative form of Diophantine approximations, see for example \cite{KM98}. However, it is also well known that the strong (multiplicative) form of extremality is  equivalent to weighted extremality for arbitrary choice of weights. For example, in the real setting this was established in \cite{MR3346961}. This explains our choice for the terminology introduced in Definition~\ref{def8.7}.
\end{remark}

\begin{remark}
Our goal is to establish that the measure of $\ff^{-1}(\Bad^{(S)}_{n}(\bt))$ is zero. Hence, without loss of generality, we will assume that $\omega_{\bt}(\cM)=1$, since otherwise, $\ff^{-1}(\Bad^{(S)}_{n}(\bt))$ is trivially null.
\end{remark}

Now we can state one of the main theorems that concerns badly approximable vectors on $S$-arithmetic manifolds.
For the rest of the section let $\infty\notin S$, and $\bt\in\R_{+}^{nl}$, where $l=\#S$, satisfying
   Equation \eqref{tau_standard} and,
	\begin{equation}\label{tau1}
	 \min_{1 \leq i \leq d} \tau_{i,\nu} \geq \max_{1 \leq j \leq m} \tau_{d+j,\nu}, \quad \forall \nu\in S.\end{equation}

\begin{theorem}\label{Bad_nonextremal}
   Let $\infty\notin S$, and $\bt\in\R_{+}^{nl}$ satisfy
   Equations \eqref{tau_standard}, \eqref{tau1} and \begin{equation}\label{tau_2}
  \frac{1}{n+1}\sum_{i=1}^n\tau_{i,\nu}<\tau_{j,\nu}\qquad (1\le j\le n,\; \nu\in S). 
	\end{equation}\label{vb39S}.
    For all $\nu\in S$, let $\bt^\nu\in\R_{+}^{n(l-1)}$ be defined as \begin{equation}\label{tau_nu}
\tau^\nu_{i,\sigma}:=\frac{\tau_{i,\sigma}}{1-\frac{\sum_{i=1}^n \tau_{i,\nu}}{n+1}}\qquad (\sigma\in S\setminus \{\nu\},\; 1\le i\le n),
    \end{equation} which by definition satisfies Equation \eqref{tau_standard}.
    Suppose for all $\nu\in S$ such that $\min_{i=1}^n\tau_{i,\nu}\leq 1$ we have that
    \begin{equation}\label{exponent_less101}
        \omega_{\bt^\nu}\left(\prod_{\sigma\in S\setminus\{\nu\}}\mathcal{M}_\sigma\right)<\frac{1-1/(n+1)\sum_{i=1}^n\tau_{i,\nu}}{1-\min_{i=1}^n\tau_{i,\nu}},
    \end{equation}
    then
    \begin{equation*}
	\mu_{S,d}\left( \ff^{-1}(\Bad^{(S)}_{n}(\bt)) \right)=0.
	\end{equation*}
\end{theorem}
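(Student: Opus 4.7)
The strategy is to realize $\ff^{-1}(\Bad^{(S)}_n(\bt))$ as a difference of a limsup set and its ``constant-improved'' version in the parameter space $\cU$, and then appeal to the constant-invariance lemma, Lemma~\ref{CI_product}, for products of doubling metric measure spaces. Since the rationals $\bb/b_0$ do not generally lie on the manifold $\cM$, Theorem~\ref{bad_0_general} cannot be applied directly; instead, I would pull the approximations back to $\cU$ through $\ff$ and reinterpret them as approximations in the parameter $\bx$.

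First I would separate the rational systems $(b_0,\bb)\in\Z^{n+1}\setminus\{\bm0\}$ of height $H$ into \emph{non-degenerate} ones, satisfying $|b_0|_\nu>H^{-\delta}$ for every $\nu\in S^*$ (with $\delta>0$ small, to be chosen), and \emph{degenerate} ones, for which $|b_0|_\nu\le H^{-\delta}$ for some $\nu$. For a non-degenerate system, the set
$$
\Delta(b_0,\bb,H;c)=\big\{\bx\in\cU:|b_0 f_{i,\nu}(\bx_\nu)-b_i|_\nu<cH^{-\tau_{i,\nu}},\ 1\le i\le n,\ \nu\in S\big\}
$$
factors over $\nu$, where $f_{i,\nu}(\bx_\nu)=x_{i,\nu}$ for $1\le i\le d$ and $f_{d+j,\nu}$ is the $j$-th coordinate of $\bff_\nu$ otherwise. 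The first $d$ inequalities cut out a box in $\cU_\nu$ of sides $cH^{-\tau_{i,\nu}}/|b_0|_\nu$, while, thanks to the $C^2$ regularity of $\bff_\nu$ and the hypothesis $\min_{1\le i\le d}\tau_{i,\nu}\ge\max_{1\le j\le m}\tau_{d+j,\nu}$ from \eqref{tau1}, the last $m$ inequalities only carve out strips transverse to the gradients of the $f_{d+j,\nu}$ of widths no smaller than the box dimensions already imposed; so $\Delta(b_0,\bb,H;c)$ is comparable, uniformly in the parameters, to such a product of rectangles. Setting
$$
\W(c)=\limsup_{H\to\infty}\,\bigcup_{\substack{(b_0,\bb)\ \text{non-deg.}\\ \text{of height}\le H}}\Delta(b_0,\bb,H;c),
$$
Lemma~\ref{CI_product} gives $\mu_{S,d}(\W(C)\setminus\W(c))=0$ for all $0<c\le C$, which is the desired constant invariance of the non-degenerate part.

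Second, the hard part concerns the contribution of the degenerate systems: if $|b_0|_\nu\le H^{-\delta}$, then the $\nu$-place constraint becomes essentially automatic, and effectively $\ff(\bx)$ is being approximated only at the reduced set of places $S\setminus\{\nu\}$, with the rescaled weight profile $\bt^\nu$ of \eqref{tau_nu} (which, as one verifies from \eqref{tau_standard}, satisfies the analogue of \eqref{tau_standard} on $S\setminus\{\nu\}$). A counting/measure estimate modelled on Proposition~\ref{prop6.7}, carried out on the reduced manifold $\prod_{\sigma\neq\nu}\cM_\sigma$, shows that the existence of infinitely many such degenerate approximations forces $\ff(\bx)$ to admit a $\bt^\nu$-approximation on that reduced manifold with exponent exceeding the right-hand side of \eqref{exponent_less101}. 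By Definition~\ref{exponent_tau} and the hypothesis \eqref{exponent_less101}, the set of such $\bx$ is $\mu_{S,d}$-null. This argument is only needed when $\min_i\tau_{i,\nu}\le 1$, since otherwise no degenerate improvement is possible; condition \eqref{tau_2} ensures that the threshold on the right-hand side of \eqref{exponent_less101} is strictly greater than $1$, so the hypothesis is nontrivial but consistent with extremal reduced manifolds.

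Finally, Corollary~\ref{cor6.3} applied to $\ff(\bx)$ places $\ff(\bx)\in W^{(S)}_n(C\bt)$ for a universal constant $C>0$, so, modulo the null exceptional set handled above, $\ff^{-1}(W^{(S)}_n(C\bt))\subset\W(C)$, while obviously $\W(c/k)\subset\ff^{-1}(W^{(S)}_n(\tfrac{c}{k}\bt))$. The constant invariance of $\W$ then yields $\mu_{S,d}\big(\ff^{-1}(W^{(S)}_n(C\bt))\setminus\ff^{-1}(W^{(S)}_n(\tfrac{c}{k}\bt))\big)=0$ for every $k$, and Lemma~\ref{measure_null} (equivalently Theorem~\ref{bad_0}) concludes $\mu_{S,d}(\ff^{-1}(\Bad^{(S)}_n(\bt)))=0$. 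The principal obstacle throughout is the degenerate-systems estimate: unlike the single-place setting of Theorem~\ref{bad_manifold_p-adic}, with several places an approximation tightness in one place can be traded for looseness in another, and hypothesis \eqref{exponent_less101} is precisely what is needed to exclude such trading on a set of full measure.
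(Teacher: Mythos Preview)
Your proposal is correct and follows essentially the same route as the paper: split into non-degenerate solutions ($|b_0|_\nu$ bounded below by a small negative power of the height for every $\nu$) and degenerate ones; pull the non-degenerate contribution back to rectangles in $\cU$ and invoke the constant-invariance machinery (the paper does this via Theorem~\ref{diri}, Corollary~\ref{coldiri+2} and Proposition~\ref{limsup_F-1}, which together are exactly your ``$\Delta(b_0,\bb,H;c)$ is comparable to a product of rectangles'' plus Lemma~\ref{CI_product}); and eliminate the degenerate contribution using the exponent hypothesis \eqref{exponent_less101}. One small correction: the degenerate step does not require anything ``modelled on Proposition~\ref{prop6.7}''. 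In the paper (Claim~C and Case~2 of Theorem~\ref{diri}, then Corollary~\ref{coro1}) one simply multiplies $\tilde\bb$ by the appropriate power of $\nu$ to clear the common $\nu$-adic factor, rescales the height, and reads off directly that the resulting inequalities at the places $\sigma\in S\setminus\{\nu\}$ give a $\bt^\nu$-approximation with exponent exceeding the right side of \eqref{exponent_less101}; Definition~\ref{exponent_tau} then finishes it. No counting or Borel--Cantelli is needed here.
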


    Note Equation \eqref{tau_2} makes the right hand side of Equation \eqref{exponent_less101} strictly greater than $1$. Hence, we have the following corollary.
    \begin{corollary} \label{bad_manifold_S-adic}
	Let $\infty\notin S$. Let $\bt=(\tau_{1,\nu}, \dots , \tau_{n,\nu}) \in \R^{nl}_{+}$ satisfies Equations \eqref{tau_standard}, \eqref{tau1} and \eqref{tau_2}.
  Suppose for each $\nu\in S$ with $\min_{i=1}^n\tau_{i,\nu}\leq 1$ the manifold $\prod_{\sigma\in S\setminus \nu}\mathcal{M}_\sigma$ is strongly extremal.
	Then
	\begin{equation*}
	\mu_{S,d}\left( \ff^{-1}(\Bad^{(S)}_{n}(\bt)) \right)=0.
	\end{equation*}
\end{corollary}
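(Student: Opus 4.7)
The plan is to deduce this corollary directly from Theorem~\ref{Bad_nonextremal}: the only thing to check is that the strong extremality assumption implies the exponent inequality \eqref{exponent_less101}, after which the theorem is invoked verbatim.

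First, I would confirm that the reweighted tuple $\bt^\nu$ defined in \eqref{tau_nu} satisfies the normalisation \eqref{tau_standard} relative to the reduced set of valuations $S\setminus\{\nu\}$. A short calculation using \eqref{tau_nu} together with \eqref{tau_standard} for $\bt$ yields
\[
\sum_{i=1}^{n}\sum_{\sigma\in S\setminus\{\nu\}}\tau^{\nu}_{i,\sigma}=\frac{(n+1)-\sum_{i=1}^{n}\tau_{i,\nu}}{1-\tfrac{1}{n+1}\sum_{i=1}^{n}\tau_{i,\nu}}=n+1,
\]
so that Definition~\ref{def8.7} is applicable. Strong extremality of $\prod_{\sigma\in S\setminus\{\nu\}}\mathcal{M}_\sigma$ therefore gives $\omega_{\bt^\nu}\bigl(\prod_{\sigma\in S\setminus\{\nu\}}\mathcal{M}_\sigma\bigr)=1$ for each $\nu\in S$ with $\min_{i}\tau_{i,\nu}\leq 1$.

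Second, I would verify that the right-hand side of \eqref{exponent_less101} exceeds $1$ under hypothesis \eqref{tau_2}. When $\min_{i}\tau_{i,\nu}<1$ the denominator is positive and the desired inequality simplifies to $\min_{i}\tau_{i,\nu}>\tfrac{1}{n+1}\sum_{i}\tau_{i,\nu}$, which is precisely \eqref{tau_2} applied at a minimising index $j$. When $\min_{i}\tau_{i,\nu}=1$, the denominator vanishes while the numerator is positive (again by \eqref{tau_2}, since $\tfrac{1}{n+1}\sum_{i}\tau_{i,\nu}<\min_{i}\tau_{i,\nu}=1$), so the quotient equals $+\infty$ and the inequality is trivial. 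Combining these two observations with the first step shows that \eqref{exponent_less101} holds in all relevant cases, and Theorem~\ref{Bad_nonextremal} then delivers $\mu_{S,d}\bigl(\ff^{-1}(\Bad^{(S)}_{n}(\bt))\bigr)=0$.

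Since this deduction is essentially formal, there is no serious obstacle inside the corollary itself; the substantive work is carried by Theorem~\ref{Bad_nonextremal}. The one point that deserves care is the normalisation check for $\bt^\nu$, since strong extremality as formulated in Definition~\ref{def8.7} is only meaningful for weight vectors satisfying \eqref{tau_standard}; once that check is made, everything else is a direct comparison of the two explicit expressions.
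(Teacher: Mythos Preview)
Your proposal is correct and matches the paper's approach: the paper's entire argument is the one-line remark preceding the corollary that \eqref{tau_2} forces the right-hand side of \eqref{exponent_less101} to exceed $1$, whence strong extremality (which gives exponent exactly $1$) yields \eqref{exponent_less101} and Theorem~\ref{Bad_nonextremal} applies. You have simply spelled this out with extra care, including the normalisation check for $\bt^\nu$ (which the paper asserts without computation in the statement of Theorem~\ref{Bad_nonextremal}) and the boundary case $\min_i\tau_{i,\nu}=1$.
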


Note that in the case $S=\{p\}$, the weights $\tau_i$ are all $>1$ and thus there is no condition on the manifold in the above corollary, which thus implies Theorem \ref{bad_manifold_p-adic}.

\medskip

 	\begin{remark}
 	Equation \eqref{tau_2} is a weaker assumption than saying that the weights are `equally distributed' over all valuations, i.e., $\sum_{i=1}^n\tau_{i,\nu}=\frac{n+1}{l}$ for all $\nu\in S$, where $S$ contains $l$ many valuations. On the other hand, Equation \eqref{tau_2} implies Equation \eqref{vb044C}. For $S=\{p\}$ both Equations \eqref{tau_2} and \eqref{vb044C} are the same as $\tau_{i}>1.
  $\end{remark}

  \begin{theorem} \label{bad_manifold_S-adic_nonextremal}
	Let $\infty\notin S$ and $\bt=(\tau_{1,\nu}, \dots , \tau_{n,\nu}) \in \R^{nl}_{+}$ satisfy Equation \eqref{tau_standard}, \eqref{tau1} and \eqref{vb044C}. Let $0<\alpha<\min\{\min_{\nu\in S}\min_{i=1}^n\tau_{i,\nu},1\}$ and $\bt^{\nu,\alpha}\in\R_{+}^{nl}$ be such that
 \begin{equation}\label{taunu}
 \begin{aligned}
     &\tau^{\nu,\alpha}_{i,\nu}:=\frac{(\tau_{i,\nu}-\alpha)(n+1)}{n+1- n\alpha},\\
     & \tau^{\nu,\alpha}_{i,\sigma}:=\frac{\tau_{i,\sigma}(n+1)}{n+1- n\alpha} ~ \qquad \forall \sigma\in S\setminus \{\nu\},
      \end{aligned}
 \end{equation}
 which by definition satisfies Equation \eqref{tau_standard}.
Suppose for every $\nu\in S$ such that $\min_{i=1}^n\tau_{i,\nu}\leq 1$ we have that
\begin{equation}\label{exponent_less2}
    \omega_{\bt^{\nu,\alpha}}(\mathcal{M})<1+\frac{\alpha}{(1-\alpha)(n+1)}
\end{equation}
for some $0<\alpha<\min_{i,\nu}\tau_{i,\nu}$. Then
	\begin{equation*}
	\mu_{S,d}\left( \ff^{-1}(\Bad^{(S)}_{n}(\bt)) \right)=0.
	\end{equation*}
\end{theorem}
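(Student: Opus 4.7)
The plan is to mirror the proof of Theorem \ref{thm6.5} on the manifold $\cM=\ff(\cU)$, with Proposition \ref{prop6.7} replaced by a null-set estimate derived from hypothesis \eqref{exponent_less2}. We apply Theorem \ref{bad_0_general}(i) to the pushforward $\ff_*\mu_{S,d}$ on $\cM$, which is doubling because $\mu_{S,d}$ is doubling on $\cU$ and $\ff$ is locally bi-Lipschitz on compact pieces of $\cU$. Taking $\cR=\{\bp/b_0:(b_0,\bp)\in\Z^{n+1}\}$ and $\Phi=(|b_0|_\nu^{-1}|b_0|_\infty^{-\tau_{i,\nu}})_{i,\nu}$, Theorem \ref{bad_0_general}(i) gives $\mu_{S,d}(\ff^{-1}(\Bad_\cR(\Phi)))=0$. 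The task thus reduces to showing that $\mu_{S,d}$-a.e.~$\bx\in\cU$ satisfies the $\bt$-Dirichlet system at $\ff(\bx)$ infinitely often via some $(b_0,\bb)$ with $|b_0|_\nu$ bounded below at every $\nu\in S$.

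Corollary \ref{cor6.3} provides, for every large $H$, some solution $(b_0,\bb)\in\Z^{n+1}\setminus\{\bm 0\}$; the only obstruction is that $|b_0|_\nu$ may be small. Following Proposition \ref{prop6.7}, for each $\nu\in S$ define $E^{\alpha}_\nu\subseteq\cU$ to consist of those $\bx$ such that for infinitely many $H$ every Dirichlet solution at $\ff(\bx)$ of height $H$ satisfies $|b_0|_\nu\le H^{-\alpha}$. Then $\ff^{-1}(\Bad^{(S)}_n(\bt))\subseteq\ff^{-1}(\Bad_\cR(\Phi))\cup\bigcup_{\nu\in S}E^{\alpha}_\nu$. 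For $\nu$ with $\min_i\tau_{i,\nu}>1$, a Borel--Cantelli-type volume argument analogous to the one proving Theorem \ref{bad_manifold_p-adic} yields $\mu_{S,d}(E^{\alpha}_\nu)=0$ without any hypothesis on $\cM$.

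The heart of the proof is the case $\min_i\tau_{i,\nu}\le 1$, where \eqref{exponent_less2} enters via a factor-and-rescale trick. Given $\bx\in E^{\alpha}_\nu$ with witness $(b_0,\bb)$, the assumption $\alpha<\tau_{i,\nu}$ together with $|b_0|_\nu\le H^{-\alpha}$ and the ultrametric inequality force each $|b_i|_\nu\le H^{-\alpha}$, so $p^{k_0}\mid\gcd(b_0,b_1,\dots,b_n)$ for $k_0:=\lceil\alpha\log_p H\rceil$ (where $p$ is the prime of $\nu$). Factoring $(b_0,\bb)=p^{k_0}(b_0',\bb')$ and setting $H'':=H^{(n+1-n\alpha)/(n+1)}$, a direct computation shows that the $\bt$-Dirichlet inequalities of height $H$ become the $\bt^{\nu,\alpha}$-Dirichlet inequalities of height $H''$: the common factor $(n+1)/(n+1-n\alpha)$ enforces $\sum\tau^{\nu,\alpha}_{i,\sigma}=n+1$, and the reduction $\tau_{i,\nu}\mapsto\tau_{i,\nu}-\alpha$ in \eqref{taunu} absorbs the factored-out $p^{k_0}\approx H^\alpha$. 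Since $\|(b_0',\bb')\|_\infty\le H^{1-\alpha}=(H'')^{(1-\alpha)(n+1)/(n+1-n\alpha)}$, whose reciprocal exponent equals $1+\alpha/((1-\alpha)(n+1))$, the pair $(b_0',\bb')$ witnesses $\omega_{\bt^{\nu,\alpha}}(\ff(\bx))\ge 1+\alpha/((1-\alpha)(n+1))$. Hypothesis \eqref{exponent_less2} then forces $\mu_{S,d}(E^{\alpha}_\nu)=0$.

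The main obstacle will be ensuring that, across the infinite family of witnessing heights $H$, one actually obtains \emph{infinitely many distinct} factored vectors $(b_0',\bb')$, as Definition~\ref{exponent_tau} requires. This should follow from $\|(b_0',\bb')\|_\infty$ being essentially of order $H^{1-\alpha}$ and thus unbounded as $H\to\infty$, together with a standard extraction from a limsup intersection; but the bookkeeping between original heights $H$, rescaled heights $H''$, and the norms of the factored witnesses requires care, particularly near the degenerate case $b_0=0$. Once $\mu_{S,d}(E^{\alpha}_\nu)=0$ is secured for every $\nu\in S$, the conclusion follows by combining the set-theoretic decomposition with the already-established $\mu_{S,d}(\ff^{-1}(\Bad_\cR(\Phi)))=0$.
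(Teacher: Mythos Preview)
Your factor-and-rescale argument for the exceptional set $E^\alpha_\nu$ in the case $\min_i\tau_{i,\nu}\le1$ is essentially correct and matches the paper's Case~2 of Theorem~\ref{diri} together with Corollary~\ref{coro12}: the rescaled witness $(b_0',\bb')$ has height $\asymp H^{1-\alpha}$ and certifies $\omega_{\bt^{\nu,\alpha}}(\ff(\bx))\ge 1+\alpha/((1-\alpha)(n+1))$, so hypothesis~\eqref{exponent_less2} kills this set. (For $\min_i\tau_{i,\nu}>1$ the paper does not use Borel--Cantelli but an algebraic trick, Claim~C of Theorem~\ref{diri}: one divides $\tilde\bb$ through by its $\nu$-content to force $|b_0'|_\nu=1$ without spoiling the inequalities, since every $\tau_{i,\nu}-1>0$.)

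The genuine gap is in your first step. You propose to apply Theorem~\ref{bad_0_general}(i) to the pushforward $\ff_*\mu_{S,d}$ on $\cM$, but that theorem requires two things that fail here. First, the measure must be a \emph{product} $\mu=\prod_i\mu_i$ of doubling measures on factors $X_i$; the pushforward $\ff_*\mu_{S,d}$ is doubling, but it is supported on the graph of $\bff$ and is not a product measure on $\prod_{i,\nu}\Q_\nu$ unless $\bff$ is constant. Second, condition~(i) requires $R_\alpha\subset\supp\mu$, yet a generic rational $\bp/b_0$ does not lie on $\cM$. You also need $\phi_{i,\nu}(\alpha)\to0$, which $|b_0|_\nu^{-1}\|\tilde\bb\|_\infty^{-\tau_{i,\nu}}$ does not satisfy without the lower bound on $|b_0|_\nu$ you later impose; but even with that restriction the first two obstructions remain.

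The paper's route around this is to work not on $\cM$ but in the parameter space $\cU\subset\Z_S^d$, where $\mu_{S,d}$ genuinely is a product of doubling measures. The missing ingredient is a Dirichlet theorem \emph{on the manifold} (Theorem~\ref{diri}): one applies Minkowski to the linear forms obtained from the first-order Taylor expansion of $\bff$, and in Case~1 this produces infinitely many $\tilde\bb$ with $(b_1/b_0,\dots,b_d/b_0)\in\cU$ and $|f_{j,\nu}(b_1/b_0,\dots,b_d/b_0)-b_{d+j}/b_0|_\nu$ small. The constant-invariance machinery (Lemma~\ref{CI_product}, used inside Corollary~\ref{coldiri+2}) is then applied to the rectangles $B_{\tilde\bb}(\bt;\delta)\subset\cU$ centred at these projected rationals, which \emph{do} lie in $\supp\mu_{S,d}$, and Proposition~\ref{limsup_F-1} transfers the conclusion back to $\ff^{-1}(\W_{S,1,n}(\delta;\bt))$. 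Your exceptional-set analysis slots into this framework exactly as Corollary~\ref{coro12}, but the surrounding architecture---linearised Dirichlet on the tangent plane plus constant invariance carried out in $\cU$---is not something Theorem~\ref{bad_0_general}(i) applied to $\ff_*\mu_{S,d}$ can deliver.
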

 As a consequence of the above theorem we have the following corollary.
\begin{corollary} \label{bad_manifold_S-adic_extremal}
	Let $\infty\notin S$.
 Suppose that $\bt=(\tau_{1,\nu}, \dots , \tau_{n,\nu}) \in \R^{nl}_{+}$ satisfying Equation \eqref{tau_standard}, \eqref{tau1} and \eqref{vb044C}.
	Suppose $\mathcal{M}$ is strongly  extremal, then
	\begin{equation*}
	\mu_{S,d}\left( \ff^{-1}(\Bad^{(S)}_{n}(\bt)) \right)=0.
	\end{equation*}
\end{corollary}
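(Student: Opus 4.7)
The corollary should follow essentially tautologically from Theorem \ref{bad_manifold_S-adic_nonextremal}. My plan is to verify that the hypothesis \eqref{exponent_less2} of that theorem is automatically implied by strong extremality once a suitable $\alpha$ is chosen; no new geometric or measure-theoretic input beyond Theorem \ref{bad_manifold_S-adic_nonextremal} and Definition \ref{def8.7} is required.

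Since each $\tau_{i,\nu}>0$, the quantity $\min_{\nu\in S}\min_{1\le i\le n}\tau_{i,\nu}$ is strictly positive, so I can fix any real number $\alpha$ with
\begin{equation*}
0<\alpha<\min\Big\{\min_{\nu\in S}\min_{1\le i\le n}\tau_{i,\nu},\;1\Big\}\,,
\end{equation*}
which lies in the range required by Theorem \ref{bad_manifold_S-adic_nonextremal}. For such $\alpha$ and any $\nu\in S$, the weight vector $\bt^{\nu,\alpha}$ defined in \eqref{taunu} satisfies the normalisation \eqref{tau_standard}, as noted immediately after \eqref{taunu}. Consequently, since $\mathcal{M}$ is assumed to be strongly extremal, Definition \ref{def8.7} will give $\omega_{\bt^{\nu,\alpha}}(\mathcal{M})=1$ for every $\nu\in S$.

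Because $0<\alpha<1$ and $n\ge 1$, the quantity $\alpha/((1-\alpha)(n+1))$ is strictly positive, so we will have
\begin{equation*}
\omega_{\bt^{\nu,\alpha}}(\mathcal{M}) \;=\; 1 \;<\; 1+\frac{\alpha}{(1-\alpha)(n+1)}\qquad\text{for every }\nu\in S\,.
\end{equation*}
In particular, \eqref{exponent_less2} is satisfied for every $\nu\in S$ with $\min_{1\le i\le n}\tau_{i,\nu}\le 1$ (regardless of whether this set is empty or not), so Theorem \ref{bad_manifold_S-adic_nonextremal} applies and yields the required conclusion $\mu_{S,d}(\ff^{-1}(\Bad^{(S)}_n(\bt)))=0$. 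There is no substantive obstacle here: all the analytic and Diophantine work lies in the proof of Theorem \ref{bad_manifold_S-adic_nonextremal}, and the corollary amounts to the trivial observation that strong extremality makes the gap $\alpha/((1-\alpha)(n+1))>0$ on the right-hand side of \eqref{exponent_less2} sufficient on its own.
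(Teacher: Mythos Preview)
Your proposal is correct and is precisely the argument the paper has in mind: the corollary is stated as an immediate consequence of Theorem~\ref{bad_manifold_S-adic_nonextremal}, and your verification that strong extremality forces $\omega_{\bt^{\nu,\alpha}}(\mathcal{M})=1<1+\frac{\alpha}{(1-\alpha)(n+1)}$ for any admissible $\alpha$ is exactly the trivial step that bridges the two statements.
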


\section{Proofs: Bad on $S$-arithmetic manifolds, $\infty\notin S$}\label{proofbad}

In this section we prove Theorems~\ref{Bad_nonextremal} and \ref{bad_manifold_S-adic_nonextremal}, and begin with two auxiliary statements. The first fact one is simply a very well known consequence of the fact that all the functions under consideration are $C^2$, in the real case this is a consequence of Taylor's formula, in the $p$-adic case appropriate theory can be found in \cite{S06}. 

\begin{lemma}\label{DQE}
	Let $F$ be either $\Q_p$ or $\R$. Let $\mathbf{g}: V \to F^m$ be defined on an open subset $V\subset\Qp^{d}$ and suppose that $\mathbf{g}$ is $C^2$ at $\bx\in V$. Then there exists constants $C_{\bx}>0$ and $\varepsilon>0$ and  $\partial_i \mathbf{g}(\bx)\in F$ $(1\le i\le d)$, which are referred to as partial derivatives of $\mathbf{g}$ at $\bx$, such that for any $\by \in B(\bx,\varepsilon)\subset V$
	\begin{equation}\label{eqn007}
	\left\Vert\mathbf{g}(\by)-\mathbf{g}(\bx)-\sum_{i=1}^{d}\partial_i \mathbf{g}(\bx)(y_i-x_i)\right\Vert_{F}< C_{\bx}\max_{1 \leq i \leq d} |y_{i}-x_{i}\vert_{F}^{2}\,,
	\end{equation}
 where $\Vert \cdot\Vert_F$ is the standard norm in $F^m$.
	Furthermore, if $\mathbf{g}$ is $C^2$ on $V$ then $C_{\bx}$ is bounded on any compact subset of $V$.
\end{lemma}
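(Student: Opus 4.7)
The plan is to treat the real and $p$-adic cases in parallel, since in both settings inequality \eqref{eqn007} is a second-order Taylor expansion with quadratic remainder and the main work is already contained in standard references (the usual multivariate Taylor theorem in the real case, and Schikhof's monograph \cite{S06} in the $p$-adic case). So the proof should essentially be an invocation of these, together with a brief continuity argument for the last sentence of the lemma.

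In the real case $F=\R$, I would take $\partial_i \mathbf{g}(\bx)$ to be the usual partial derivatives and apply the multivariate Taylor formula to each coordinate function $g_k$ of $\mathbf{g}$, yielding
\[
g_k(\by)=g_k(\bx)+\sum_{i=1}^{d}\partial_i g_k(\bx)(y_i-x_i)+R_k(\bx,\by),
\]
where the remainder $R_k(\bx,\by)$ is bounded by a constant depending only on $d$ times $\sup_{\bv\in B(\bx,\varepsilon)}\|D^2 g_k(\bv)\|\cdot\max_{i}|y_i-x_i|^2$, provided $\varepsilon>0$ is small enough that $B(\bx,\varepsilon)\subset V$. Taking $C_\bx$ to be a suitable multiple of $\max_k\sup_{\bv\in B(\bx,\varepsilon)}\|D^2 g_k(\bv)\|$, which is finite since $\mathbf{g}$ is $C^2$, gives \eqref{eqn007}.

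In the $p$-adic case $F=\Q_p$, the $C^2$ hypothesis is phrased through iterated difference quotients: $\mathbf{g}$ is $C^2$ at $\bx$ precisely when the second-order difference quotient $\Phi_2\mathbf{g}$ extends continuously to the diagonal (see \cite{S06}). I would then invoke Schikhof's $p$-adic Taylor expansion to produce partial derivatives $\partial_i \mathbf{g}(\bx)\in\Q_p^m$ and a remainder $Q(\bx,\by)$ whose $p$-adic norm is bounded by $C_\bx\max_i|y_i-x_i|_p^2$, with $C_\bx$ controlled by the sup-norm of the continuous extension of $\Phi_2\mathbf{g}$ on a small neighbourhood of the diagonal point $(\bx,\bx,\bx)$. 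The ultrametric property of $|\cdot|_p$ then yields \eqref{eqn007} without any loss in the constant.

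For the last sentence of the lemma, in each of the two settings $C_\bx$ is, up to absolute constants depending only on $d$ and $m$, the supremum over a small ball around $\bx$ of a fixed continuous object, namely $D^2\mathbf{g}$ in the real case and the continuous extension of $\Phi_2 \mathbf{g}$ in the $p$-adic case. If $\mathbf{g}$ is $C^2$ on all of $V$, this supremum depends continuously on $\bx\in V$ and hence is bounded on any compact subset of $V$, giving the claim. The only mildly non-routine aspect of the proof is correctly locating and citing the $p$-adic Taylor formula in Schikhof's text; once that is done, the argument is entirely standard.
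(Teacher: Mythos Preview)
Your proposal is correct and matches the paper's approach exactly: the paper does not give a proof of this lemma, simply remarking that it is a well-known consequence of the $C^2$ hypothesis, obtained via Taylor's formula in the real case and via the $p$-adic theory in Schikhof \cite{S06}. Your sketch fills in precisely these standard details, including the continuity argument for the boundedness of $C_\bx$ on compact sets.
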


Note that in the ultrametric case  ($F=\Q_p$), by \eqref{eqn007}, we also have
\begin{equation}\label{minus}\vert \mathbf{g}(\by)-\mathbf{g}(\bx)\vert_F\leq \max_{i=1}^d\{\vert \partial_i\mathbf{g}(\bx)\vert_F \vert y_i-x_i\vert_F, C_{\bx}\vert y_i-x_i\vert_F^2\}\,.
\end{equation}

The following lemma is a variant of Minkowski's theorem for systems of linear forms for the $S$-arithmetic setting. For singleton $S$ it can explicitly be found in \cite[Lemma 3.3]{BLW21b} and in general it can be viewed as a version of Lemma~\ref{S_arithmetic_Dirichlet_theorem}, or  Corollary~\ref{cor6.3}.
Throughout the rest of this section we assume that $\infty\notin S.$

\begin{lemma}\label{Minlinear}
Let $\infty\not\in S$ and let $L_{i,\nu}(\bx)$ $(1\le i\le n)$ be linear forms in $\bx=(x_0,\cdots,x_n)\in\Z^{n+1}$, where coefficients of $L_{i,\nu}$ belong to $\Z_\nu$ for all $i,\nu$. Let $\bt=(\tau_{i,\nu})\in\R_{+}^{nl}$ satisfy $\sum_{i}\sum_{\nu}\tau_{i,\nu}=n+1$ and $\sigma=(\sigma_{i,\nu})\in\R^{nl}$ be such that $\sum_{i} \sigma_{i,\nu}=n$ for every $\nu\in S$. Then there exists $H_\sigma$ such that for $H_0,\cdots,H_{n}$ integers with $T^{n+1}:=(H_0+1)\cdots(H_n+1)\geq H_\sigma$ there exists $\bx\in\Z^{n+1}_{\neq\bm0}$ such that
 \begin{equation}\label{linear}
\vert L_{i,\nu}(\bx)\vert_\nu<\nu^{\sigma_{i,\nu}}T^{-\tau_{i,\nu}}\qquad (1\le i\le n,\; \nu\in S)
\end{equation}
and \begin{equation}\label{height}
\vert x_i\vert_\infty\leq H_i\qquad (0\le i\le n)\;.
\end{equation}
\end{lemma}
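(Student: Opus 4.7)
The plan is to mimic the proof of Lemma~\ref{S_arithmetic_Dirichlet_theorem} given earlier in the paper: reduce each $\nu$-adic inequality to a congruence, collect all the congruences into a sublattice $\Lambda\subset\Z^{n+1}$, bound its covolume by $T^{n+1}$, and then extract a short nonzero vector by Minkowski's first theorem applied to the archimedean box.

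First I would define, for each $\nu\in S$ and each $1\le i\le n$, the integer $\delta_{i,\nu}$ to be the smallest positive integer with $\nu^{-\delta_{i,\nu}}<\nu^{\sigma_{i,\nu}}T^{-\tau_{i,\nu}}$; by construction $\nu^{\delta_{i,\nu}}\le\nu\cdot T^{\tau_{i,\nu}}\nu^{-\sigma_{i,\nu}}$, and any $\bx\in\Z^{n+1}$ with $L_{i,\nu}(\bx)\equiv 0\pmod{\nu^{\delta_{i,\nu}}}$ automatically obeys \eqref{linear} at $(i,\nu)$. Since $\Z$ is dense in $\Z_\nu$ for every $\nu\in S$, I can replace each coefficient of $L_{i,\nu}$ by an integer congruent to it modulo $\nu^{\delta_{i,\nu}}$ to obtain an integer-coefficient linear form $L'_{i,\nu}$ with $L_{i,\nu}(\bx)\equiv L'_{i,\nu}(\bx)\pmod{\nu^{\delta_{i,\nu}}}$ on $\Z^{n+1}$. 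Setting $\Lambda:=\{\bx\in\Z^{n+1}:L'_{i,\nu}(\bx)\equiv 0\pmod{\nu^{\delta_{i,\nu}}}\text{ for all }(i,\nu)\}$ and combining these congruences across distinct primes via the Chinese Remainder Theorem, the covolume is bounded by
$$
\mathrm{covol}(\Lambda)\le\prod_{i,\nu}\nu^{\delta_{i,\nu}}\le T^{\sum_{i,\nu}\tau_{i,\nu}}\prod_{i,\nu}\nu^{1-\sigma_{i,\nu}}=T^{n+1}\prod_{\nu\in S}\nu^{n-n}=T^{n+1},
$$
where the hypotheses $\sum_{i,\nu}\tau_{i,\nu}=n+1$ and $\sum_i\sigma_{i,\nu}=n$ collapse the correction factor to $1$.

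Next I would apply Minkowski's first theorem to the lattice $\Lambda$ and the closed symmetric convex body $B=\prod_{i=0}^n[-H_i,H_i]$ of volume $2^{n+1}\prod_i H_i$. Any nonzero $\bx\in\Lambda\cap B$ consists of integers satisfying both $L'_{i,\nu}(\bx)\equiv 0\pmod{\nu^{\delta_{i,\nu}}}$ and $|x_i|_\infty\le H_i$, and hence realises \eqref{linear} and \eqref{height}. The threshold $H_\sigma$ enters precisely to guarantee that Minkowski's hypothesis $\mathrm{vol}(B)\ge 2^{n+1}\mathrm{covol}(\Lambda)$ is met: for $T^{n+1}\ge H_\sigma$ one has $\prod H_i$ close enough to $T^{n+1}=\prod(H_i+1)$, and, if necessary, one can shrink each $\delta_{i,\nu}$ slightly by replacing $\sigma_{i,\nu}$ with $\sigma_{i,\nu}-\epsilon$ for a small $\epsilon>0$, which is permissible because the inequality in \eqref{linear} is strict.

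The main obstacle is precisely this last step. The covolume estimate is essentially sharp and $\prod H_i$ is strictly less than $\prod(H_i+1)=T^{n+1}$, so the Minkowski count is borderline in the worst case, including those non-generic values of $T$ for which $\tau_{i,\nu}\log_\nu T-\sigma_{i,\nu}$ happens to be an integer. The sole role of the threshold $H_\sigma$ is to absorb this discrepancy and to permit the small $\epsilon$-perturbation above; this bookkeeping, balancing the choice of the $\delta_{i,\nu}$ against the strictness of \eqref{linear}, is the only nontrivial piece of the argument.
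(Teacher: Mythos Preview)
Your approach is essentially the same as the paper's: reduce the $\nu$-adic conditions to congruences modulo $\nu^{\delta_{i,\nu}}$, bound the product $\prod_{i,\nu}\nu^{\delta_{i,\nu}}$ by $T^{n+1}$ using $\sum_{i,\nu}\tau_{i,\nu}=n+1$ and $\sum_i\sigma_{i,\nu}=n$, and then extract a nonzero solution by a counting argument. The paper opts for the pigeonhole principle on the $T^{n+1}=\prod(H_i+1)$ integer points with $0\le x_i\le H_i$ rather than Minkowski's theorem, and handles the borderline by working with $T_\varepsilon=T-\varepsilon$ and passing to the limit over the finite box; this sidesteps the discrepancy between $\prod H_i$ and $\prod(H_i+1)$ cleanly.

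One small slip: in your last paragraph you propose to ``shrink each $\delta_{i,\nu}$'' by replacing $\sigma_{i,\nu}$ with $\sigma_{i,\nu}-\epsilon$, but lowering $\sigma_{i,\nu}$ makes the right-hand side $\nu^{\sigma_{i,\nu}}T^{-\tau_{i,\nu}}$ smaller and hence forces $\delta_{i,\nu}$ \emph{larger}, which moves the covolume the wrong way. The fix you actually want is the paper's $T\rightsquigarrow T-\varepsilon$ trick (or, equivalently, pigeonhole on the $T^{n+1}$ non-negative points), which gives strict inequality against $T_\varepsilon$ and then uses finiteness of the box to pass to the limit.
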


The proof can be obtained in a standard way using either the pigeonhole principle as in \cite[Lemma 3.3]{BLW21b} 
or Minkowski's theorem for convex bodies as we did in Lemma~\ref{S_arithmetic_Dirichlet_theorem} above. For completeness we provide a brief argument, this time using the pigeonhole principle.

\begin{proof} 
Let $T_\varepsilon=T-\varepsilon$ and
define unique integers $\delta_{i,\nu}$ such that $$
\nu^{\delta_{i,\nu}-1}\leq \nu^{-\sigma_{i,\nu}}T_{\varepsilon}^{\tau_{i,\nu}}<\nu^{\delta_{i,\nu}}\qquad \forall i,\nu\,,
$$
which for sufficiently large $T_{\varepsilon}$ will be positive.
Note that $(L_{i,\nu}(\bx))_{\nu\in S}\in\Z_S$ for every $1\le i\le n$ and $\bx\in\Z^{n+1}$. Split $\Z_S^n$ into the subsets $S(\ba)=\prod_{i,\nu}B(a_i,\nu^{-\delta_{i,\nu}})$, where $\ba=(a_0,\cdots,a_n)\in\Z^{n+1}$ and $0\leq a_i\leq \prod_{\nu\in S} \nu^{\delta_{i,\nu}}.$ Observe that the sets $S(\ba)$ are disjoint and covering $\Z_S^{n}$, and that the numbers of these sets is at most 
$$
\prod_{i,\nu} \nu^{\delta_{i,\nu}}\leq \prod_{\nu\in S}\nu^{1-\sum_{i=1}^n\sigma_{i,\nu}} T_{\varepsilon}^{\sum_{i,\nu}\tau_{i,\nu}}= \left(\prod_{\nu}\nu^{1-n}\right) T_\varepsilon^{(n+1)}<T^{n+1}.
$$ 
On the other hand, there are $T^{n+1}$ different integer points  $\bx\in\Z^{n+1}$ satisfying \eqref{height}. Hence there should be two different integer points $\bx_1\neq \bx_2$ under consideration such that $L(\bx_1), L(\bx_2)$ belong to same set $S(\ba)$. Therefore, for $\bx=\bx_1-\bx_2\neq \mathbf{0}$ we have that
$$\vert L_{i,\nu}(\bx)\vert_\nu\leq \nu^{-\delta_{i,\nu}}<\nu^{\sigma_{i,\nu}}T_{\varepsilon}^{-\tau_{i,\nu}}\qquad \forall i,\nu.$$ Since there are finitely many integer points $\bx$ satisfying \eqref{height}, without loss of generality we can assume that there is an unique $\bx$ which works for all $T_\varepsilon$, $\varepsilon\to 0$.
\end{proof}

The proofs of the results stated in the previous section will require a Dirichlet-type result on manifolds.  In the case  $S=\{p\}$ a result of this ilk appears in \cite[Theorem~7.1]{BLW21b}, albeit it is not sufficiently `flexible' in terms of constants to be used for our purpose. 
Furthermore, our goal is to establish its generalisation for the $S$-arithmetic setting, in which certain complications arise due to working with both $\vert\cdot\vert_p$ (potentially for several primes $p$) and $\vert\cdot\vert_\infty$ norm at the same time.
The complications arise due to the following 

\noindent{\bf Observation}\;\!: Given integers $a_0\neq0$ and $a_1$, the inequality $\vert a_0x-a_1\vert_p<\varepsilon$ implies (in fact equivalent to) $\vert x-\frac{a_1}{a_0}\vert_p< \varepsilon \vert a_0\vert_p^{-1}$. Thus, after `canceling out' $a_0$ the right-hand side in the implied inequality may become much larger than $\varepsilon$, potentially larger than $1$, since $|a_0|_p$ could be very small and so $|a_0|_p^{-1}$ could be very large -- in the extreme case, as large as $|a_0|$. 

This observation gives raise to complications while proving Dirichlet type theorems for manifolds in the $S$-arithmetic setting, where $S$ contains multiple valuations. Worse still, the above observation means that the diameters of the hyper-rectangles arising from a Dirichlet type theorem in the $S$-arithmetic setting may not tend to zero, rendering the use of the general framework of \S\ref{GS} inapplicable. We deal with this obvious obstacle by imposing appropriate conditions on the exponents of the manifolds in question. 

Let $C=C_{\bx}>0$ the constant that satisfy Lemma~\ref{DQE} for all $\bff_{\nu}$ simultaneously. By $a_n\ll b_n$, we mean $a_n\leq K b_n,$ where $K>0$ does not depend on $n.$

\begin{theorem} \label{diri}
	Let $\bff=(\bff_\nu)_{\nu\in S}=(f_{1,\nu}, \dots , f_{m,\nu}):\U:=\prod_{\nu\in S}\U_\nu \to \Zs^{m}$ be a map defined on an open subset $\U$, where $\U_\nu \subseteq \Z_\nu^{d}$ open, $\bx_\nu\in\U_\nu\setminus \Q^d$ and suppose that $\bff_\nu$ is $C^2$ at $\bx_\nu$ for all $\nu\in S$. let $\lambda_\nu$ be given by
	\begin{align}
	\max\left\{1,\,\underset{1 \leq j \leq m}{\max_{1 \leq i \leq d}} \left| \frac{\partial f_{j,\nu}}{\partial x_{i,\nu}}(\bx_\nu) \right|_{\nu}\right\}=\nu^{\lambda_\nu}\,. \label{first_order_constant2}
	\end{align}
 Let $0<\alpha_\nu<\min_{i=1}^n\{\tau_{i,\nu},1\}$ for all $\nu\in S$. Let $c=(c_1,\cdots,c_m)\in\Z^m$, $n=m+d$, and $\bt=(\tau_{1,\nu}, \dots, \tau_{n,\nu}) \in \R^{nl}_{+}$ such that $\bt$ satisfies Equations \eqref{tau_standard}, \eqref{tau1} and \eqref{vb044C} . Then either of the following will happen: 
 \begin{itemize}
 	\item[{\rm\bf Case 1:}] There exist infinitely many $\tilde\bb=(b_0,\cdots,b_n)\in\Z^{n+1}$, such that  $\forall\nu\in S, i=1,\cdots,n$, $\vert b_0\vert_\nu\geq  \Vert\tilde\bb\Vert_\infty^{-\alpha_\nu}$,  and
	\begin{equation}
	\begin{cases}
	\left|x_{i,\nu} -\frac{b_i}{b_0}\right|_{\nu} < \nu^{(n+m\lambda_\nu+\sum_{j=1}^{m}c_{j})/d}  \vert b_0\vert_\nu^{-1}\Vert\tilde\bb\Vert_\infty^{-\tau_{i,\nu}} \qquad\qquad\qquad\quad (1 \leq i \leq d), \\[2ex]
	\left|f_{j,\nu}\left( \frac{b_{1}}{b_{0}}, \dots , \frac{b_{d}}{b_{0}} \right) - \frac{b_{d+j}}{b_0} \right|_{\nu} < \nu^{-c_j} \vert b_0\vert_\nu^{-1} \Vert \tilde\bb\Vert_\infty^{-\tau_{d+j,\nu}} \, \qquad\quad (1 \leq j \leq m), \label{tau_{2}2} \\[2ex]
	\end{cases}
	\end{equation}
	\item[{\rm\bf Case 2:}] There exist some $\nu\in S$ with $\min_{i=1}^n\tau_{i,\nu}\leq 1$ and $K_{\lambda,c}>0$ such that for infinitely many $\tilde\bb=(b_0,\cdots,b_n)\in\Z^{n+1}$ and for all $\sigma\in S\setminus\{\nu\},$ \begin{align}
	\begin{cases}
	|b_{0}x_{i,\sigma}-b_{i}|_{\sigma} &<K_{\lambda,c} \Vert \tilde\bb\Vert_\infty^{-\tau_{i,\sigma}/(1-\alpha_\nu)} \quad (1 \leq i \leq d), \\[1.5ex]
	\displaystyle\left| b_{0}f_{j,\sigma}(\bx_\sigma)-b_{d+j} \right|_{\sigma} &< K_{\lambda,c}\Vert \tilde\bb\Vert_\infty^{-\tau_{d+j,\sigma}/(1-\alpha_\nu)} \hspace*{8ex} (1 \leq j \leq m), \label{excep2thm}
	\end{cases}
	\end{align}
	and
	\begin{align}
	\begin{cases}
	|b_{0}x_{i,\nu}-b_{i}|_{\nu} &< K_{\lambda,c} \Vert \tilde\bb\Vert_\infty^{-(\tau_{i,\nu}-\alpha_\nu)/(1-\alpha_\nu)} \quad (1 \leq i \leq d), \\[1.5ex]
	\displaystyle\left| b_{0}f_{j,\nu}(\bx_\nu)-b_{d+j} \right|_{\nu} &< K_{\lambda,c}  \Vert \tilde\bb\Vert_\infty^{-(\tau_{d+j,\nu}-\alpha_\nu)/(1-\alpha_\nu)} \quad  (1 \leq j \leq m), \hspace*{8ex} \label{excep_11}.
	\end{cases}
	\end{align}
	
	where $K_{\lambda,c}>0$ is a constant that depend on $c=(c_j), \lambda=(\lambda_\nu).$
	
	\end{itemize}
\end{theorem}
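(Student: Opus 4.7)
The plan is to apply the $S$-arithmetic Minkowski-type principle (Lemma~\ref{Minlinear}) to linear forms built from the parametrisation $\ff$, and then dichotomise on the $\nu$-adic size of the leading coordinate $b_0$ of the resulting integer vector. For each $\nu\in S$ I take the linear forms in $\tilde\bb=(b_0,b_1,\ldots,b_n)\in\Z^{n+1}$
\begin{align*}
L_{i,\nu}(\tilde\bb)&=b_0 x_{i,\nu}-b_i \qquad (1\le i\le d),\\
L_{d+j,\nu}(\tilde\bb)&=b_0 f_{j,\nu}(\bx_\nu)-b_{d+j} \qquad (1\le j\le m),
\end{align*}
whose coefficients lie in $\Z_\nu$, and set $\sigma_{i,\nu}=(n+m\lambda_\nu+\sum_jc_j)/d$ for $1\le i\le d$ and $\sigma_{d+j,\nu}=-c_j-\lambda_\nu$ for $1\le j\le m$, so that $\sum_{i=1}^n\sigma_{i,\nu}=n$. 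With heights $H_0=\cdots=H_n=T-1$, Lemma~\ref{Minlinear} produces, for every sufficiently large $T$, a non-zero $\tilde\bb\in\Z^{n+1}$ with $\|\tilde\bb\|_\infty\le T-1$ satisfying
\begin{equation}\label{plan-mink}
|b_0 x_{i,\nu}-b_i|_\nu<\nu^{\sigma_{i,\nu}}T^{-\tau_{i,\nu}},\qquad |b_0 f_{j,\nu}(\bx_\nu)-b_{d+j}|_\nu<\nu^{\sigma_{d+j,\nu}}T^{-\tau_{d+j,\nu}}.
\end{equation}
A short product-formula argument exploiting hypothesis~\eqref{vb044C} forces $b_0\ne 0$ once $T$ is large, and letting $T\to\infty$ (together with $\bx_\nu\notin\Q^d$) produces infinitely many distinct $\tilde\bb$.

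In Case~1, the assumption $|b_0|_\nu\ge\|\tilde\bb\|_\infty^{-\alpha_\nu}$ for every $\nu\in S$ allows me to divide the first bound in \eqref{plan-mink} by $|b_0|_\nu$, directly recovering the first claim of Case~1 (using $T\ge\|\tilde\bb\|_\infty$). For the second claim I split
\[f_{j,\nu}(b_1/b_0,\ldots,b_d/b_0)-b_{d+j}/b_0 = \big[f_{j,\nu}(b_1/b_0,\ldots,b_d/b_0)-f_{j,\nu}(\bx_\nu)\big]+\big[f_{j,\nu}(\bx_\nu)-b_{d+j}/b_0\big],\]
control the first bracket via the ultrametric Taylor estimate \eqref{minus} from Lemma~\ref{DQE} with derivative bound $\nu^{\lambda_\nu}$, and bound the second directly from \eqref{plan-mink}. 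The inequality $\min_i\tau_{i,\nu}\ge\max_j\tau_{d+j,\nu}$ from~\eqref{tau1} then ensures that, for $T$ sufficiently large, the Taylor contribution is dominated by the direct contribution in the ultrametric maximum, giving the claimed bound with constant $\nu^{-c_j}$.

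Case~2 is the genuinely new difficulty and where the observation flagged just before the theorem bites. When $|b_0|_\nu<\|\tilde\bb\|_\infty^{-\alpha_\nu}$ for some $\nu$, the key point is that $\nu$-adic smallness of $b_0$ propagates to the entire vector: applying the ultrametric triangle inequality to \eqref{plan-mink}, together with $|x_{i,\nu}|_\nu,|f_{j,\nu}(\bx_\nu)|_\nu\le 1$ and $\alpha_\nu<\min_i\tau_{i,\nu}$, forces $|b_i|_\nu\lesssim T^{-\alpha_\nu}$ for every $i\in\{0,1,\ldots,n\}$. Hence every coordinate of $\tilde\bb$ is divisible by $\nu^k$ with $\nu^k\gtrsim T^{\alpha_\nu}$, and I pass to the reduced integer vector $\tilde\bb':=\tilde\bb/\nu^k$, whose height obeys $\|\tilde\bb'\|_\infty\lesssim T^{1-\alpha_\nu}$ and therefore $T\ge\|\tilde\bb'\|_\infty^{1/(1-\alpha_\nu)}$. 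Since $|\nu^k|_\sigma=1$ for $\sigma\ne\nu$, the $\sigma$-adic bounds on $L_{i,\sigma}(\tilde\bb')$ coincide with those on $L_{i,\sigma}(\tilde\bb)$, while at the distinguished place $\nu$ they are inflated by $\nu^k$ and so scale as $T^{-(\tau_{i,\nu}-\alpha_\nu)}$. Feeding the relation $T\ge\|\tilde\bb'\|_\infty^{1/(1-\alpha_\nu)}$ back into these bounds converts $T^{-\tau_{i,\sigma}}$ into $\|\tilde\bb'\|_\infty^{-\tau_{i,\sigma}/(1-\alpha_\nu)}$ for $\sigma\ne\nu$ and $T^{-(\tau_{i,\nu}-\alpha_\nu)}$ into $\|\tilde\bb'\|_\infty^{-(\tau_{i,\nu}-\alpha_\nu)/(1-\alpha_\nu)}$ at $\nu$, yielding precisely \eqref{excep2thm}--\eqref{excep_11}; the hypothesis $\min_i\tau_{i,\nu}\le 1$ is what makes this reduction non-vacuous. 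The main technical obstacle is the bookkeeping in Case~2: one must verify that the reduced vectors $\tilde\bb'$ form an infinite family of distinct solutions, handle the propagation of the $\nu$-adic smallness across several valuations simultaneously, and absorb all the $\sigma$-dependent powers $\nu^{\sigma_{i,\nu}}$ and the scaling constants into the single global constant $K_{\lambda,c}$.
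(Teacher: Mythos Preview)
Your overall architecture matches the paper's, but there is a genuine gap in the choice of linear forms fed into Lemma~\ref{Minlinear}. You take $L_{d+j,\nu}(\tilde\bb)=b_0 f_{j,\nu}(\bx_\nu)-b_{d+j}$, whereas the paper uses the \emph{linearised} forms
\[
b_0 f_{j,\nu}(\bx_\nu)-\sum_{i=1}^d \partial_i f_{j,\nu}(\bx_\nu)(b_0 x_{i,\nu}-b_i)-b_{d+j}.
\]
This matters in Case~1. With your forms, to obtain the second line of \eqref{tau_{2}2} you must bound $|f_{j,\nu}(b_1/b_0,\ldots,b_d/b_0)-f_{j,\nu}(\bx_\nu)|_\nu$, and via \eqref{minus} the dominant contribution is the \emph{first-order} term $\nu^{\lambda_\nu}\max_i|x_{i,\nu}-b_i/b_0|_\nu$, which is of size $\nu^{\lambda_\nu+\sigma_{i,\nu}}|b_0|_\nu^{-1}\|\tilde\bb\|_\infty^{-\tau_{\min,\nu}}$. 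For this to be strictly below $\nu^{-c_j}|b_0|_\nu^{-1}\|\tilde\bb\|_\infty^{-\tau_{d+j,\nu}}$ you need $\tau_{\min,\nu}>\tau_{d+j,\nu}$ strictly, but hypothesis \eqref{tau1} only gives $\ge$. When equality holds (e.g.\ the unweighted case $\tau_{1,\nu}=\cdots=\tau_{n,\nu}$) the constant in front is $\nu^{\lambda_\nu+(n+m\lambda_\nu+\sum c_l)/d+c_j}$, which is $>1$ in general, and the bound fails. The paper's linearised forms avoid this: only the \emph{second-order} Taylor remainder $C\max_i|x_{i,\nu}-b_i/b_0|_\nu^2$ must be estimated, and this carries an extra factor $|b_0|_\nu^{-1}\|\tilde\bb\|_\infty^{-\tau_{\min,\nu}}\le\|\tilde\bb\|_\infty^{\alpha_\nu-\tau_{\min,\nu}}\to 0$, which kills the constant regardless of whether equality holds in \eqref{tau1}.

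There is a second, smaller gap in Case~2: the theorem asserts that the bad place $\nu$ satisfies $\min_i\tau_{i,\nu}\le 1$, and this is part of the \emph{conclusion}, not a hypothesis. Your remark that this ``makes the reduction non-vacuous'' does not establish it. The paper handles this by a separate renormalisation (its Claim~C): if $\min_i\tau_{i,\nu}>1$ and $|b_0|_\nu<1$, one checks from the Minkowski inequalities that every $|b_i|_\nu\le|b_0|_\nu$, so $\tilde\bb':=|b_0|_\nu\,\tilde\bb\in\Z^{n+1}$ still satisfies the system with $|b_0'|_\nu=1$; hence places with $\min_i\tau_{i,\nu}>1$ can always be forced to satisfy $|b_0|_\nu\ge\|\tilde\bb\|_\infty^{-\alpha_\nu}$, and only places with $\min_i\tau_{i,\nu}\le 1$ can obstruct Case~1.
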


\begin{proof}
 Let $0<\varepsilon<1$ be the constant that satisfy Lemma~\ref{DQE} for all $\bff_{\nu}$ simultaneously. In particular, we have that $B(\bx,\varepsilon) \subseteq \prod_{\nu\in S}\U_\nu$.
	By Lemma \ref{Minlinear} with $\sigma_\nu=(\nu,\cdots,\nu)$,
 $H_0=\dots=H_n=H$ and $T=H+1$, for any integer $H \ge H_{\sigma}^{1/(n+1)}$, and $\nu\in S$ the following system
	\begin{align}
	\begin{cases}
	|b_{0}x_{i,\nu}-b_{i}|_{\nu} &< \nu^{(n+m\lambda_\nu+\sum_{j=1}^{m}c_{j})/d} H^{-\tau_{i,\nu}} \quad (1 \leq i \leq d), \\[1.5ex]
	\displaystyle\left| b_{0}f_{j,\nu}(\bx_\nu)-\sum_{i=1}^{d}\partial_i f_{j,\nu}(\bx_\nu) \left( b_0x_{i,\nu}-b_{i} \right)-b_{d+j} \right|_{\nu} &< \nu^{-c_j} H^{-\tau_{d+j,\nu}} \hspace*{8ex} (1 \leq j \leq m), \label{ineq2D}\\[1.5ex]
	\underset{0\leq i\leq n}{\max} |b_{i}|_\infty  & \leq H
	\end{cases}
	\end{align}
	has a non-zero integer solution $\tilde\bb:=(b_{0},b_{1},\dots, b_{n}) \in \Z^{n+1}$.

 Let $\tau_{\min,\nu}:=\min_{1 \leq i \leq d}\tau_{i,\nu}$, $\tau_{\max,\nu}:=\max_{1 \leq j \leq m} \tau_{d+j,\nu}, \tau_{\min,\nu}^\star=\min_{1\leq i\leq n}\tau_{i,\nu}$. The rest of the proof will be represented by a series of claims that we shall establish one-by-one.

 \medskip

\textbf{Claim A:} We claim that $b_0\neq 0$ in Equation \eqref{ineq2D}. Suppose the contrary, that is $b_{0}=0$. Then by the first inequality of \eqref{ineq2D} we have that $|b_{i}|_{\nu}<\nu H^{-\tau_{i,\nu}}$
for $i=1\cdots,d$ and $\forall \nu\in S$. Therefore, $\prod_{\nu\in S}\vert b_i\vert_\nu<H^{-1}$ by Equation \eqref{vb044C}. As $|b_{i}|_\infty \leq H$ and $H>H_{0}$, we have that $b_{i}=0$ for $1 \leq i \leq d$. Considering the second set of inequalities of \eqref{ineq2D}, for each $1 \leq j \leq m$ we have that $|b_{d+j}|_{\nu}<\nu H^{-\tau_{d+j,\nu}}$ for all $\nu\in S$ which yields $b_{d+j}=0$ for each $1 \leq j \leq m$, by Equation \eqref{vb044C}. Thus $(b_{0}, b_{1}, \dots, b_{n})=\boldsymbol{0}$, a contradiction. So we must have that $b_{0}\neq 0$.
	

\medskip

 \textbf{Claim B:} We verify that there are infinitely many $\tilde\bb$ that appear in Equation \eqref{ineq2D} for $\bx, $ with $\bx_\nu\notin \Q^d$. Suppose the contrary. Since the right-hand side of Equation \eqref{tau_{2}2} goes to $0$ as $H\to\infty$, the left-hand side is fixed. Hence $ b_0x_{i,\nu}-b_i=0$ for all $i=1,\cdots,d,$ which is a contradiction to the fact that $\bx_\nu\notin\Q^d.$

Hence there are infinitely many $\tilde\bb\in\Z^{n+1}$ such that
\begin{align}
	\begin{cases}
	|b_{0}x_{i,\nu}-b_{i}|_{\nu} &< \nu^{(n+m\lambda_\nu+\sum_{j=1}^{m}c_{j})/d} \Vert \tilde\bb\Vert_\infty^{-\tau_{i,\nu}} \; (1 \leq i \leq d), \\[1.5ex]
	\displaystyle\left| b_{0}f_{j,\nu}(\bx_\nu)-\sum_{i=1}^{d}\partial_i f_{j,\nu}(\bx_\nu) \left( b_0x_{i,\nu}-b_{i} \right)-b_{d+j} \right|_{\nu} &< \nu^{-c_j} \Vert \tilde\bb\Vert_\infty^{-\tau_{d+j,\nu}} \hspace*{8ex} (1 \leq j \leq m), \label{ineq22}
	\end{cases}
	\end{align}

\noindent\textbf{Case 1:} Suppose for arbitrarily large $H$, there exists $\tilde\bb$ satisfying Equation \eqref{ineq2D} (and hence Equation \eqref{ineq22}) are such that for all $\nu\in S$, \begin{equation}\vert b_0\vert_\nu\geq \Vert \tilde\bb\Vert_\infty^{-\alpha_\nu}.\end{equation}
The same reasoning as in Claim B says there are infinitely many such $\tilde\bb.$
Then we get that for all $\nu\in S,$
	$$
	\left( \frac{b_{1}}{b_{0}}, \dots , \frac{b_{d}}{b_{0}} \right)\in B(\bx_\nu,\varepsilon)\subseteq  \U_\nu,
	$$
	since $\alpha_\nu<\tau_{i,\nu}$ for all $\nu\in S, i=1,\cdots,n$.  Thus $f_{j,\nu}\left(\frac{b_{1}}{b_{0}}, \dots , \frac{b_{d}}{b_{0}}\right)$ is well defined.
	
	Using the fact that each $f_{j,\nu}$ is $C^2$ at $\bx_\nu$, by Lemma~\ref{DQE}, we get that for  infinitely many $\tilde\bb$ appearing in Equation \eqref{ineq22},
	\begin{align} \label{f_tau2}
	\left|\left( f_{j,\nu} \left(\frac{b_{1}}{b_{0}}, \dots , \frac{b_{d}}{b_{0}} \right)-f_{j,\nu}(\bx_\nu)-\sum_{i=1}^d \partial_i f_{j,\nu}(\bx_\nu) \left(\frac{b_{i}}{b_{0}}-x_{i,\nu} \right)\right) \right|_{\nu} & < C\max_{1 \leq i \leq d} \left| \frac{b_{i}}{b_{0}}-x_{i,\nu} \right|_{\nu}^{2}\nonumber\\
 &<\nu^{-c_{j}}\frac{1}{\vert b_0\vert_\nu}\Vert \tilde\bb\Vert_\infty^{-\tau_{d+j,\nu}}
	\end{align}
	for each $1 \leq j \leq m$.

	The last inequality follows since
	\begin{align*}
	C\max_{1 \leq i \leq d} \left| \frac{b_{i}}{b_{0}}-x_{i} \right|_{\nu}^{2} & \;\stackrel{\eqref{ineq22}}{<}\; C \nu^{(2n+2m\lambda+ 2\Sigma_{c})/d} \frac{1}{\vert b_0\vert_\nu^2}\Vert \tilde\bb\Vert_\infty^{-2\tau_{\min,\nu}}\\
	& = \frac{1}{\vert b_0\vert_\nu} C \nu^{(2n+2m\lambda + 2\Sigma_{c})/d}\frac{1}{\vert b_0\vert_\nu \Vert \tilde\bb\Vert_\infty^{\tau_{min,\nu}}}\Vert \tilde\bb\Vert_\infty^{-\tau_{\min,\nu}}\\[1ex]
	& \stackrel{\star}{\le}\frac{1}{\vert b_0\vert_\nu} \nu^{-c_{j}}\Vert \tilde\bb\Vert_\infty^{-\tau_{\max,\nu}} \le \nu^{-c_{j}}\frac{1}{\vert b_0\vert_\nu}\Vert \tilde\bb\Vert_\infty^{-\tau_{d+j,\nu}}\,.
	\end{align*}
Here $\star$ follows from the fact that $\vert b_0\vert_\nu\Vert \tilde\bb\Vert_\infty^{\tau_{i,\nu}}\geq \Vert \tilde\bb\Vert_\infty^{-\alpha_\nu+\tau_{i,\nu}}\to \infty$ as $\Vert \tilde\bb\Vert_\infty\to\infty$ since $\alpha_\nu<\tau_{i,\nu}.$
	
	Combining \eqref{ineq22} and \eqref{f_tau2} we obtain for all $\nu\in S$ and $j=1,\cdots,m$,
	\begin{equation*}
	\left| f_{j,\nu} \left( \frac{b_{1}}{b_{0}}, \dots , \frac{b_{d}}{b_{0}} \right)-\frac{b_{d+j}}{b_0} \right|_{\nu}\leq  \nu^{-c_j}\vert b_0\vert_\nu^{-1}\Vert \tilde\bb\Vert_\infty^{-\tau_{d+j,\nu}}.
	\end{equation*}
	for each $1 \leq j \leq m$. This verifies the second set of inequalities in \eqref{tau_{2}2}, while the first set of inequalities in \eqref{tau_{2}2} follow directly from the first set of inequalities in  \eqref{ineq22}.

\medskip
 
	\textbf{Claim C:}
We claim that if for some $\nu\in S$, $\min_{i=1}^n\tau_{i,\nu}>1$ then there are infinitely many $\tilde\bb$ satisfying Equation \eqref{ineq22}, such that $\vert b_0\vert_\nu=1$.
From Equation \eqref{ineq2D}, infinitely many $\tilde\bb\in\Z^{n+1}$
$$\vert b_i\vert_\nu\leq \max\{\nu^{(n+m\lambda_\nu+\sum_{j=1}^{m}c_{j})/d} \Vert \tilde\bb\Vert_\infty^{-\tau_{i,\nu}},  \vert b_0\vert_\nu^{-1}\}=\vert b_0\vert_\nu^{-1}, i=1,\cdots,d.$$ Also,
$$\vert b_{d+j}\vert_\nu\leq \max_{i=1}^d\{\nu^{\lambda_\nu}\nu^{(n+m\lambda_\nu+\sum_{j=1}^{m}c_{j})/d} \Vert \tilde\bb\Vert_\infty^{-\tau_{i,\nu}}, \nu^{-c_j}\Vert \tilde\bb\Vert_\infty^{-\tau_{d+j,\nu}}, \vert b_0\vert_\nu^{-1}\}=\vert b_0\vert_\nu^{-1}, j=1,\cdots, m.$$
Hence we can construct $\tilde\bb'=\vert b_0\vert_\nu\tilde\bb\in\Z^{n+1}$. Then Equation \eqref{ineq22} is satisfied by $\tilde\bb'.$ This is because
\begin{equation}\label{ineq222}
	\begin{cases}
	|b'_{0}x_{i,\nu}-b'_{i}|_{\nu} < \nu^{(n+m\lambda_\nu+\sum_{j=1}^{m}c_{j})/d} \vert b_0\vert_\nu^{\tau_{i,\nu}-1} \Vert \tilde\bb'\Vert_\infty^{-\tau_{i,\nu}} \quad (1 \leq i \leq d), \\[1.5ex]
	\displaystyle\left| b'_{0}f_{j,\nu}(\bx_\nu)-\sum_{i=1}^{d}\partial_i f_{j,\nu}(\bx_\nu) \left( b'_0x_{i,\nu}-b'_{i} \right)-b'_{d+j} \right|_{\nu} \!\!< \nu^{-c_j} \vert b_0\vert_\nu^{\tau_{d+j,\nu}-1} \Vert \tilde\bb'\Vert_\infty^{-\tau_{d+j,\nu}} \hspace*{1.5ex} (1 \leq j \leq m)
	\end{cases}
	\end{equation}
and $\vert b_0\vert_\nu^{\tau_{i,\nu}-1}<1,$ for $i=1,\cdots,n$. And for any $\sigma\in S\setminus\{\nu\},$ since $\vert\vert b_0\vert_\nu\vert_\sigma=1,$ the inequalities in Equation \eqref{ineq22} are satisfied by $\tilde{\bb'}$. Also, note that on the right-hand side of Equation \eqref{ineq222}, $\vert b_0\vert_\nu^{\tau_{i,\nu}-1}\Vert \tilde\bb'\Vert_\infty^{-\tau_{i,\nu}}=\vert b_0\vert_\nu^{-1} \Vert \tilde\bb\Vert_\infty^{-\tau_{i,\nu}}\leq \Vert \tilde\bb\Vert_\infty^{1-\tau_{i,\nu}}\to 0 $ as $\Vert \tilde\bb\Vert_\infty\to\infty.$ Hence there are infinitely many $\tilde{\bb}'$. Also note that $\vert b_0'\vert_\nu=1\geq \frac{1}{\Vert\tilde\bb'\Vert_\infty}\geq \frac{1}{\Vert\tilde\bb'\Vert^{\alpha_\nu}_\infty}.$

\medskip

\noindent\textbf{Case 2: } We consider the opposite situation to Case 1. This means all large $H\geq 1$, there exists some $\nu\in S$ such that $\vert b_0\vert _\nu<\Vert \tilde\bb\Vert_\infty^{-\alpha_\nu}$  for all $\tilde\bb\in\Z^{n+1}$ satisfying Equations \eqref{ineq2D}. Since $S$ is finite, this means for some $\nu\in S$, $\vert b_0\vert _\nu<\Vert \tilde\bb\Vert_\infty^{-\alpha_\nu}$  for all $\tilde\bb\in\Z^{n+1}$ appearing in Equations \eqref{ineq22}. 
 Using the conclusion of Claim C, $\min_{i=1}^n\tau_{i,\nu}\leq 1$. Then using the inequalities from Equation \eqref{ineq22}, we have
	$$\vert b_i\vert_\nu\ll \max\left\{\frac{1}{\Vert \tilde\bb\Vert_\infty^{\min_{i}\tau_{i,\nu}}},\frac{1}{\Vert \tilde\bb\Vert_\infty^{\alpha_\nu}}\right\}, \quad i=1,\cdots,n.$$
	Hence $\vert b_i\vert_\nu\leq K^\nu_{\lambda, c} \Vert \tilde\bb\Vert_\infty^{-\alpha_\nu} ~~~\forall i=1,\cdots,n$ for some $K^{\nu}_{\lambda,c}>0$.
 
 Now suppose $k$ is the unique integer such that $\nu^k\leq \frac{\Vert \tilde\bb\Vert_\infty^{\alpha_\nu}}{K^\nu_{\lambda, c} }\leq \nu^{k+1}.$ Since $\nu^k\leq \vert b_i\vert_\nu^{-1}$, we take $b_i':=\nu^{-k}b_i\in \Z$ for $i=1,\cdots,n.$ Then 
 $$
	\Vert \tilde\bb'\Vert_\infty\leq \nu K^\nu_{\lambda,c}\Vert \tilde\bb\Vert^{1-\alpha_\nu}_\infty.$$ Hence, for all $\sigma\in S\setminus\{\nu\}$ we have \begin{align}
	\begin{cases}
	|b'_{0}x_{i,\sigma}-b'_{i}|_{\sigma} &< K_{\lambda,c} \Vert \tilde\bb'\Vert_\infty^{-\tau_{i,\sigma}/(1-\alpha_\nu)} \quad (1 \leq i \leq d), \\[1.5ex]
	\displaystyle\left| b'_{0}f_{j,\sigma}(\bx_\sigma)-\sum_{i=1}^{d}\partial_i f_{j,\sigma}(\bx_\sigma) \left( b'_0x_{i,\sigma}-b'_{i} \right)-b'_{d+j} \right|_{\sigma} &< K_{\lambda,c}  \Vert \tilde\bb'\Vert_\infty^{-\tau_{d+j,\sigma}/(1-\alpha_\nu)} \quad  (1 \leq j \leq m), \label{excep_16}
	\end{cases}
	\end{align}
	
and

\begin{align}
\begin{cases}
|b'_{0}x_{i,\nu}-b'_{i}|_{\nu} &\!\!\!\!< K_{\lambda,c} \Vert \tilde\bb'\Vert_\infty^{-(\tau_{i,\nu}-\alpha_\nu)/(1-\alpha_\nu)} \quad (1 \leq i \leq d), \\[1.5ex]
\displaystyle\left| b'_{0}f_{j,\nu}(\bx_\nu)-\sum_{i=1}^{d}\partial_i f_{j,\nu}(\bx_\sigma) \left( b'_0x_{i,\nu}-b'_{i} \right)-b'_{d+j} \right|_{\nu} &\!\!\!\!< K_{\lambda,c}  \Vert \tilde\bb'\Vert_\infty^{-(\tau_{d+j,\nu}-\alpha_\nu)/(1-\alpha_\nu)} \;\;  (1 \leq j \leq m), \label{excep_17}
\end{cases}
\end{align}
	where $K_{\lambda,c}>0$ is a constant that depend on $\lambda,c$.
	
\medskip

\textbf{Claim D:} We claim that there are infinitely many $\tilde\bb'$ as constructed above. Note that
	$\vert b_i'\vert_\infty= \frac{\vert b_i\vert_\infty}{\nu^k}\asymp \frac{\vert b_i\vert_\infty}{\Vert \tilde\bb\Vert_\infty^{\alpha_\nu}}$ for all $i=1,\cdots,n$.
	This implies
	$$
	\Vert \tilde\bb'\Vert_\infty=\max_{i=0}^n\vert b_i'\vert_\infty\asymp \frac{\max_{i=0}^n\vert b_i\vert_\infty}{\Vert \tilde\bb\Vert_\infty^{\alpha_\nu}}=\Vert \tilde\bb\Vert_\infty^{1-\alpha_\nu}.
	$$
	Since $\alpha_\nu<1$
	and in Claim B we showed there are infinitely many $\tilde\bb$, so infinitely many $\tilde\bb'$ is guaranteed.
	
Lastly by Equation \eqref{tau_standard}, there are infinitely many  $\tilde\bb'=(b_0',\cdots,b_n')\in\Z^{n+1}$ such that
	for all $\sigma\in S\setminus\{\nu\}$,  \begin{align}
	\begin{cases}
	|b'_{0}x_{i,\sigma}-b'_{i}|_{\sigma} &<K_{\lambda,c} \Vert \tilde\bb'\Vert_\infty^{-\tau_{i,\sigma}/(1-\alpha_\nu)} \quad (1 \leq i \leq d), \\[1.5ex]
	\displaystyle\left| b'_{0}f_{j,\sigma}(\bx_\sigma)-b'_{d+j} \right|_{\sigma} &< K_{\lambda,c}\Vert \tilde\bb'\Vert_\infty^{-\tau_{d+j,\sigma}/(1-\alpha_\nu)} \hspace*{8ex} (1 \leq j \leq m), \label{excep2}
	\end{cases}
	\end{align}
and

\begin{align}
\begin{cases}
|b'_{0}x_{i,\nu}-b'_{i}|_{\nu} &< K_{\lambda,c} \Vert \tilde\bb'\Vert_\infty^{-(\tau_{i,\nu}-\alpha_\nu)/(1-\alpha_\nu)} \quad (1 \leq i \leq d), \\[1.5ex]
\displaystyle\left| b'_{0}f_{j,\nu}(\bx_\nu)-b'_{d+j} \right|_{\nu} &< K_{\lambda,c}  \Vert \tilde\bb'\Vert_\infty^{-(\tau_{d+j,\nu}-\alpha_\nu)/(1-\alpha_\nu)} \quad  (1 \leq j \leq m), \hspace*{8ex} \label{excep_1}.
\end{cases}
\end{align}

\end{proof}

\medskip

\begin{corollary}\label{coro1}
	Let us assume the same notation and assumption as in Theorem \ref{Bad_nonextremal}. Let $\lambda_\nu$ be as in Theorem \ref{diri}, which depends on $\bx$, and $c=(c_1,\cdots, c_m)\in \Z^m$. Then there exists $0<\alpha_{\nu}<\min_{i=1}^n\{\tau_{i,\nu},1\}$ for all $\nu\in S$, such that for $\mu_{S,d}$ almost every $\bx\in\U$, there are infinitely many $\tilde\bb=(b_0,\cdots,b_n)\in\Z^{n+1}$ with $\vert b_0\vert_\nu\geq  \Vert\tilde\bb\Vert_\infty^{-\alpha_\nu}$ $\forall\nu\in S$ and
	\begin{equation}
	\begin{cases}
	\left|x_{i,\nu} -\frac{b_i}{b_0}\right|_{\nu} < \nu^{(n+m\lambda+\sum_{j=1}^{m}c_{j})/d}  \vert b_0\vert_\nu^{-1}\Vert\tilde\bb\Vert_\infty^{-\tau_{i,\nu}} \qquad\qquad\quad (1 \leq i \leq d), \\[2ex]
	\left|f_{j,\nu}\left( \frac{b_{1}}{b_{0}}, \dots , \frac{b_{d}}{b_{0}} \right) - \frac{b_{d+j}}{b_0} \right|_{\nu} < \nu^{-c_j} \vert b_0\vert_\nu^{-1} \Vert \tilde\bb\Vert_\infty^{-\tau_{d+j,\nu}} \, \qquad\quad (1 \leq j \leq m).  \\[2ex]
	\end{cases}
	\end{equation}

\end{corollary}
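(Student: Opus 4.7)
The plan is to deduce Corollary~\ref{coro1} from Theorem~\ref{diri} by a careful choice of the parameters $\alpha_\nu$ so that the exceptional Case~2 of Theorem~\ref{diri} occurs on a $\mu_{S,d}$-null set. For $\nu \in S$ with $\min_i\tau_{i,\nu} > 1$, Claim~C inside the proof of Theorem~\ref{diri} already rules out Case~2 (any choice of $\alpha_\nu \in (0, \min\{\min_i\tau_{i,\nu}, 1\})$ works). For $\nu$ with $\min_i\tau_{i,\nu} \le 1$, I will pick $\alpha_\nu$ strictly less than $\min_i\tau_{i,\nu}$ but close enough to it that
\[
\frac{1 - \frac{1}{n+1}\sum_{i=1}^n \tau_{i,\nu}}{1 - \alpha_\nu} \;>\; \omega_{\bt^\nu}\Big(\prod_{\sigma \in S \setminus \{\nu\}} \cM_\sigma\Big).
\]
Such $\alpha_\nu$ exists because, by hypothesis \eqref{exponent_less101}, the right-hand side is strictly smaller than the limiting value $\bigl(1 - \tfrac{1}{n+1}\sum_i\tau_{i,\nu}\bigr)/\bigl(1 - \min_i\tau_{i,\nu}\bigr)$ of the left-hand side as $\alpha_\nu \uparrow \min_i\tau_{i,\nu}$.

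With these $\alpha_\nu$ fixed, Theorem~\ref{diri} gives, for every $\bx \in \U$ with $\bx_\nu \notin \Q^d$ for all $\nu$ (a full-measure subset of $\U$), either Case~1, which is precisely the conclusion to be proved at $\bx$, or Case~2 for some $\nu \in S$. It therefore suffices to show that, for each $\nu$ with $\min_i\tau_{i,\nu}\le 1$, the set
\[
E_\nu := \{\bx \in \U : \text{Case 2 of Theorem~\ref{diri} holds at } \bx \text{ for that particular } \nu\}
\]
is $\mu_{S,d}$-null; the exceptional set is then contained in the finite union $\bigcup_\nu E_\nu$. For $\bx \in E_\nu$, the inequalities \eqref{excep2thm} restricted to $\sigma \in S \setminus\{\nu\}$ produce infinitely many $\tilde\bb' \in \Z^{n+1}$ with
\[
|b_0' \, \ff(\bx)_{i,\sigma} - b_i'|_\sigma \;<\; K_{\lambda,c}\, \|\tilde\bb'\|_\infty^{-\tau_{i,\sigma}/(1-\alpha_\nu)}, \qquad \sigma \in S \setminus\{\nu\}, \; 1 \le i \le n.
\]
Raising to the power $1/\tau^\nu_{i,\sigma}$, using the identity $\tau_{i,\sigma}/\tau^\nu_{i,\sigma} = 1 - \tfrac{1}{n+1}\sum_j\tau_{j,\nu}$ read off from \eqref{tau_nu}, and absorbing $K_{\lambda,c}$ into the exponent as $\|\tilde\bb'\|_\infty \to \infty$, I deduce
\[
\omega_{\bt^\nu}\bigl(\ff_{S\setminus\{\nu\}}(\bx_{S\setminus\{\nu\}})\bigr) \;\ge\; \frac{1 - \frac{1}{n+1}\sum_j\tau_{j,\nu}}{1 - \alpha_\nu} \;>\; \omega_{\bt^\nu}\Big(\prod_{\sigma \neq \nu} \cM_\sigma\Big),
\]
where the last inequality holds by our choice of $\alpha_\nu$.

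The final step is Fubini: the lower bound above depends only on the coordinates $\bx_{S\setminus\{\nu\}}$, so the projection of $E_\nu$ onto $\prod_{\sigma\neq\nu}\U_\sigma$ is contained in the set where $\omega_{\bt^\nu}(\ff_{S\setminus\{\nu\}}(\cdot))$ strictly exceeds $\omega_{\bt^\nu}(\prod_{\sigma \neq \nu}\cM_\sigma)$, which is null by Definition~\ref{exponent_tau} interpreted as essential supremum. Hence $\mu_{S,d}(E_\nu) = 0$ and the corollary follows. The main difficulty, and the reason for the precise shape of the hypothesis \eqref{exponent_less101}, lies in balancing two competing constraints on $\alpha_\nu$: it must be strictly below $\min_i\tau_{i,\nu}$ so that Case~1 of Theorem~\ref{diri} is the target conclusion, yet close enough to $\min_i\tau_{i,\nu}$ that Case~2 forces an $\omega_{\bt^\nu}$-exponent strictly exceeding the Diophantine exponent of $\prod_{\sigma\neq\nu}\cM_\sigma$.
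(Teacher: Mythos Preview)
Your proof is correct and follows essentially the same approach as the paper: choose $\alpha_\nu$ close enough to $\min_i\tau_{i,\nu}$ (for those $\nu$ with $\min_i\tau_{i,\nu}\le 1$) that the Case~2 inequalities of Theorem~\ref{diri} force $\omega_{\bt^\nu}$ to exceed the manifold exponent, which is null by definition. You are in fact more explicit than the paper in spelling out the Fubini step passing from a $\mu_{S\setminus\{\nu\},d}$-null projection to a $\mu_{S,d}$-null set in $\U$, and in noting that the constant $K_{\lambda,c}$ can be absorbed into the exponent since infinitely many $\tilde\bb'$ are available.
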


\begin{proof}
 For all $\nu\in S$ with $\min_{i=1}^n\tau_{i,\nu}\leq 1,$ we choose $\varepsilon_\nu>0$ such that
    \begin{equation}\label{exponent_less10}
        \omega_{\bt^\nu}\left(\prod_{\sigma\in S\setminus\{\nu\}}\mathcal{M}_\sigma\right)<\frac{1-1/(n+1)\sum_{i=1}^n\tau_{i,\nu}}{1-\min_{i=1}^n\tau_{i,\nu}+\varepsilon_\nu},
    \end{equation} where $\bt^\nu$ is defined in Equation \eqref{tau_nu}.
    This is possible by the assumption of the manifold.
    Then for these $\nu$, we choose $\alpha_\nu>0$ such that $\min_{i=1}^n\tau_{i,\nu}-\varepsilon_\nu<\alpha_\nu<\min^n_{i=1}\tau_{i,\nu}.$ For $\nu\in S$ such that $\min_{i=1}^n\tau_{i,\nu}> 1,$ we choose  some $\alpha_\nu$ such that $0<\alpha_\nu<1$. We now apply Theorem \ref{diri} with these $\alpha_\nu.$

   Suppose $\bx$ satisfies the second possibility of Theorem \ref{diri}, which means there exists some $\nu\in S$ with $\min_{i=1}^n\tau_{i,\nu}\leq 1$, and $K_{\lambda,c}>0$ such that for infinitely many $\tilde\bb=(b_0,\cdots,b_n)\in\Z^{n+1}$ and for all $\sigma\in S\setminus\{\nu\}$ \begin{align}
	\begin{cases}
	|b_{0}x_{i,\sigma}-b_{i}|_{\sigma}^{1/{\tau^\nu_{i,\sigma}}} &<K_{\lambda,c} \Vert \tilde\bb\Vert_\infty^{-\frac{(1-\frac{\sum_{i=1}^n \tau_{i,\nu}}{n+1})}{(1-\alpha_\nu)}} \quad (1 \leq i \leq d), \\[1.5ex]
	\displaystyle\left| b_{0}f_{j,\sigma}(\bx_\sigma)-b_{d+j} \right|_{\sigma}^{1/{\tau^\nu_{i,\sigma}}} &< K_{\lambda,c}\Vert \tilde\bb\Vert_\infty^{-\frac{(1-\frac{\sum_{i=1}^n \tau_{i,\nu}}{n+1})}{(1-\alpha_\nu)}} \hspace*{8ex} (1 \leq j \leq m).
	\end{cases}
	\end{align}
 Now note that $$\frac{(1-\frac{\sum_{i=1}^n \tau_{i,\nu}}{n+1})}{(1-\alpha_\nu)}> \frac{1-1/(n+1)\sum_{i=1}^n\tau_{i,\nu}}{1-\min_{i=1}^n\tau_{i,\nu}+\varepsilon_\nu}. $$ By Equation \eqref{exponent_less10}, we have that the set of such $\bx$ has  $\mu_{S,d}$ measure zero.
 \end{proof}

\medskip

\begin{corollary}\label{coro12}
	Let us take the same notation and assumption as in Theorem \ref{bad_manifold_S-adic_nonextremal}. Let $\lambda_\nu$ be as in Theorem \ref{diri}, which depends on $\bx$, and $c=(c_1,\cdots, c_m)\in \Z^m$. Then there exist $0<\alpha_{\nu}<\min_{i=1}^n\{\tau_{i,\nu},1\}$ for all $\nu\in S$, such that the same conclusion as in Corollary \ref{coro1} follows.	
\end{corollary}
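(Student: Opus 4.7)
The plan is to follow the scheme of the proof of Corollary~\ref{coro1}, but replacing the Diophantine exponent hypothesis on $\prod_{\sigma\ne\nu}\cM_\sigma$ with the hypothesis of Theorem~\ref{bad_manifold_S-adic_nonextremal} on the exponent of the full manifold $\cM$ relative to the rescaled weights $\bt^{\nu,\alpha}$. The mechanism is the same: Theorem~\ref{diri} supplies two alternatives, Case~1 is precisely what Corollary~\ref{coro1} asserts, and the hypothesis on $\omega_{\bt^{\nu,\alpha}}(\cM)$ will be used to show that Case~2 occurs only on a $\mu_{S,d}$--null subset of $\U$.

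First I would fix the single $\alpha$ provided by the hypothesis, with $0<\alpha<\min_{i,\nu}\tau_{i,\nu}$ and
\[
\omega_{\bt^{\nu,\alpha}}(\cM)\;<\;1+\frac{\alpha}{(1-\alpha)(n+1)}
\]
for every $\nu\in S$ with $\min_i\tau_{i,\nu}\le 1$. Define $\alpha_\nu:=\alpha$ for those $\nu$, and pick any $\alpha_\nu\in(0,1)$ otherwise. Then $0<\alpha_\nu<\min\{\min_i\tau_{i,\nu},1\}$, so Theorem~\ref{diri} applies, and for each $\bx\in\U$ either Case~1 holds (which is exactly the desired conclusion) or Case~2 produces infinitely many $\tilde\bb\in\Z^{n+1}$ and some $\nu\in S$ with $\min_i\tau_{i,\nu}\le1$ realizing \eqref{excep2thm}--\eqref{excep_11}.

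The key point is the algebraic identity, checked directly from \eqref{taunu}: setting $\omega:=\frac{n+1-n\alpha}{(n+1)(1-\alpha)}=1+\frac{\alpha}{(1-\alpha)(n+1)}$, one has
\[
\frac{\tau_{i,\sigma}}{1-\alpha}=\omega\,\tau^{\nu,\alpha}_{i,\sigma}\quad(\sigma\ne\nu),\qquad \frac{\tau_{i,\nu}-\alpha}{1-\alpha}=\omega\,\tau^{\nu,\alpha}_{i,\nu}.
\]
Consequently, the entire system \eqref{excep2thm}--\eqref{excep_11} is equivalent (up to an absorbed constant) to
\[
\max_{i,\sigma}\bigl|b_0\,\ff(\bx)_{i,\sigma}-b_i\bigr|_\sigma^{1/\tau^{\nu,\alpha}_{i,\sigma}}\;\ll\;\|\tilde\bb\|_\infty^{-\omega}\quad\text{for infinitely many }\tilde\bb,
\]
which by definition means $\omega_{\bt^{\nu,\alpha}}(\ff(\bx))\ge\omega$. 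Since $S$ is finite, the set of $\bx$ in Case~2 is contained in a finite union, over $\nu$ with $\min_i\tau_{i,\nu}\le1$, of such sets, and each is $\mu_{S,d}$--null by \eqref{exponent_manifolds} together with the assumption $\omega_{\bt^{\nu,\alpha}}(\cM)<\omega$.

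The only genuinely substantive step is the algebraic identification of the exponents in \eqref{excep2thm}--\eqref{excep_11} with the weights $\bt^{\nu,\alpha}$ scaled by $\omega$; everything else is a direct mirror of the proof of Corollary~\ref{coro1} and poses no real difficulty. I expect no serious obstacles beyond this bookkeeping.
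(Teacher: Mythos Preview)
Your proposal is correct and takes essentially the same approach as the paper's proof: set the $\alpha_\nu$'s equal to the single $\alpha$ supplied by the hypothesis, apply Theorem~\ref{diri}, and use the algebraic identity $\tau_{i,\sigma}/(1-\alpha)=\omega\,\tau^{\nu,\alpha}_{i,\sigma}$ (and its $\sigma=\nu$ variant) with $\omega=1+\frac{\alpha}{(1-\alpha)(n+1)}$ to recognise Case~2 as the statement $\omega_{\bt^{\nu,\alpha}}(\ff(\bx))\ge\omega$, which is then ruled out almost everywhere by \eqref{exponent_less2}. The only cosmetic difference is that the paper sets $\alpha_\nu=\alpha$ uniformly for all $\nu\in S$, whereas you do so only for $\nu$ with $\min_i\tau_{i,\nu}\le1$; since Case~2 of Theorem~\ref{diri} only ever produces such a $\nu$, this is immaterial.
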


\begin{proof}
    Let $\alpha_\nu=\alpha$ for every $\nu\in S$, where $0<\alpha<\min_{\nu\in S}\min_{i=1}^n\{\tau_{i,\nu},1\}$. Now we apply Theorem \ref{diri} with this choice of $\alpha.$
Suppose $\bx$ satisfies the second possibility of Theorem \ref{diri}, which means there exists some $\nu\in S$ with $\min_{i=1}^n\tau_{i,\nu}\leq 1$, and $K_{\lambda,c}>0$ such that for infinitely many $\tilde\bb=(b_0,\cdots,b_n)\in\Z^{n+1}$ and for all $\sigma\in S$, \begin{align}
	\begin{cases}
	|b_{0}x_{i,\sigma}-b_{i}|_{\sigma}^{1/{\tau^{\nu,\alpha}_{i,\sigma}}} &<K_{\lambda,c} \Vert \tilde\bb\Vert_\infty^{-\frac{n+1-n\alpha}{(n+1)(1-\alpha)}} \quad (1 \leq i \leq d), \\[1.5ex]
	\displaystyle\left| b_{0}f_{j,\sigma}(\bx_\sigma)-b_{d+j} \right|_{\sigma}^{1/{\tau^{\nu,\alpha}_{i,\sigma}}} &< K_{\lambda,c}\Vert \tilde\bb\Vert_\infty^{-\frac{n+1-n\alpha}{(n+1)(1-\alpha)}} \hspace*{8ex} (1 \leq j \leq m),
	\end{cases}
	\end{align}
where $\tau_{i,\sigma}^{\nu,\alpha}$ is defined in Equation \eqref{taunu}. Since $\frac{n+1-n\alpha}{(n+1)(1-\alpha)}=1+ \frac{\alpha}{(1-\alpha)(n+1)}$, by Equation \eqref{exponent_less2}, we have that the set of such $\bx$ has $\mu_{S,d}$ measure zero.
\end{proof}

\medskip

\begin{corollary} \label{coldiri+2}
Assume the conclusion of the previous two corollaries; Corollary \ref{coro1} and Corollary \ref{coro12}. Let $\delta_{1}, \dots , \delta_{n} >0$ be any constants. Then for $\mu_{S,d}$ almost every $\bx \in \U$ the following system
	\begin{equation} \label{cas+2}
	\begin{cases}
	\left|x_{i,\nu} -\frac{b_{i}}{b_{0}}\right|_{\nu} < \delta_{i} \vert b_0\vert_\nu^{-1}\Vert \tilde\bb\Vert_\infty^{-\tau_{i,\nu}} & (1 \leq i \leq d), \nu\in S, \\[2ex]
	\left|f_{j,\nu}\left(\frac{b_{1}}{b_{0}}, \dots , \frac{b_{d}}{b_{0}} \right) - \frac{b_{d+j}}{b_{0}} \right|_{\nu} < \delta_{d+j}\vert b_0\vert_\nu^{-1}\Vert \tilde\bb\Vert_\infty^{-\tau_{d+j,\nu}}  \, & (1 \leq j \leq m),\nu\in S\,,
	\end{cases}
	\end{equation}
has infinitely many integer solutions $\tilde\bb=(b_{0}, \dots, b_{n}) \in \Z^{n+1}$ such that $\vert b_0\vert_\nu\geq  \Vert\tilde\bb\Vert_\infty^{-\alpha_\nu}$ for all $\nu\in S$. Here $\alpha_\nu, \forall \nu\in S$ be as in the previous two corollaries.
\end{corollary}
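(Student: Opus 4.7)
The plan is to upgrade the explicit constants in Corollaries~\ref{coro1} and \ref{coro12} to arbitrary $\delta_i>0$, using the freedom in the integer parameter $c=(c_1,\dots,c_m)\in\Z^m$ together with a refined Minkowski argument. First, since Corollary~\ref{coro1} (resp.~\ref{coro12}) delivers a full $\mu_{S,d}$-measure conclusion for every single $c\in\Z^m$ and $\Z^m$ is countable, intersecting over all $c$ yields a single full-measure subset $\U_0\subseteq\U$ on which the conclusion holds for every $c$ simultaneously. I would fix $\bx\in\U_0$ together with arbitrary positive $\delta_1,\dots,\delta_n$. For the inequalities indexed by $j=1,\dots,m$ the right-hand side coming from the previous corollary is $\nu^{-c_j}|b_0|_\nu^{-1}\Vert\tilde\bb\Vert_\infty^{-\tau_{d+j,\nu}}$, so choosing every $c_j$ large enough that $\nu^{-c_j}\le\delta_{d+j}$ for all $\nu\in S$ settles the second block of \eqref{cas+2} at once.

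The obstacle lies in the first block, whose right-hand side carries the factor $\nu^{(n+m\lambda_\nu+\sum c_j)/d}$ which grows precisely with the $c_j$'s just taken large. To resolve this I would re-execute the derivation of Theorem~\ref{diri}, applying Lemma~\ref{Minlinear} with a non-uniform choice of Minkowski weights: rather than distributing the weight equally over $1\le i\le d$, set $\nu^{\sigma_{i,\nu}}=\delta_i$ for $1\le i\le d$ and $\nu^{\sigma_{d+j,\nu}}=\nu^{-\lambda_\nu}\delta_{d+j}$ for $1\le j\le m$, and compensate for the Minkowski constraint $\sum_{i=1}^n\sigma_{i,\nu}=n$ by an appropriate $\nu$-by-$\nu$ rescaling of the height parameter $T$ in Lemma~\ref{Minlinear}, which only dilates $\Vert\tilde\bb\Vert_\infty$ by a bounded factor and is therefore absorbed on the infinitely many $\tilde\bb$ produced by the construction. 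The Taylor-expansion step that converts the linear-forms inequality into the non-linear inequality for $f_{j,\nu}$ in \eqref{cas+2}, the Case~1/Case~2 dichotomy, and the null-measure control of Case~2 via the exponent hypotheses of Corollaries~\ref{coro1}/\ref{coro12} all go through verbatim, and the lower bound $|b_0|_\nu\ge\Vert\tilde\bb\Vert_\infty^{-\alpha_\nu}$ comes from being in Case~1. The main technical delicacy will be bookkeeping the $S$-arithmetic coupling that the Minkowski volume identity imposes between the constants at different valuations; this is the step I expect to be the longest and is the principal obstacle to the proof.
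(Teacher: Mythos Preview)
Your handling of the second block of \eqref{cas+2} --- choosing each $c_j$ large enough that $\nu^{-c_j}\le\delta_{d+j}$ for every $\nu\in S$ --- is exactly what the paper does. The divergence is in the first block.

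The paper does \emph{not} re-run the Minkowski argument with modified weights. Instead, having fixed the $c_j$, Corollary~\ref{coro1}/\ref{coro12} gives that for $\mu_{S,d}$-almost every $\bx$ the system holds with the (possibly large) constants $\nu^{(n+m\lambda_\nu+\sum c_j)/d}$ on the first block. Writing the set of such $\bx$ as a $\limsup$ of the boxes $B_{\tilde\bb}(\bt;\delta)$ over $\tilde\bb\in S_{\bt}(\delta_{d+1},\dots,\delta_n)$, one has
\[
\mu_{S,d}\Big(\bigcup_{\delta>0}\limsup_{\tilde\bb} B_{\tilde\bb}(\bt;\delta)\Big)=\mu_{S,d}(\U),
\]
and by continuity of measure the same holds in the limit $\delta\to\infty$. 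The key step is then to invoke the constant-invariance machinery of \S\ref{gen2} (the paper cites Corollary~\ref{CI_balls_corollary}; strictly speaking it is Lemma~\ref{CI_product}, since the $B_{\tilde\bb}$ are rectangles) to conclude that $\mu_{S,d}(\limsup_{\tilde\bb} B_{\tilde\bb}(\bt;\delta))$ is independent of $\delta>0$. Taking $\delta=\min_{1\le i\le d}\delta_i$ finishes the proof in one line.

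Your proposed route --- re-deriving Theorem~\ref{diri} with bespoke $\sigma_{i,\nu}$ --- is a genuine detour and also not quite coherent as written: the ``$\nu$-by-$\nu$ rescaling of $T$'' you suggest is ill-defined, since in Lemma~\ref{Minlinear} there is a single archimedean parameter $T$ common to all $\nu\in S$. One could instead relax the constraint $\sum_i\sigma_{i,\nu}=n$ to a bounded defect absorbed into $H_\sigma$, and something along these lines can probably be pushed through, but it is considerably heavier and entirely bypasses the scaling lemma, which is the central tool the paper was built to apply.
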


\begin{proof}
Suppose $0<\alpha_{\nu}<\min_{i=1}^n\{\tau_{i,\nu},1\}, \nu\in S$ be as in Corollary \ref{coro1}, and Corollary \ref{coro12}.
	Define the set of integer points
	\begin{equation}\label{eqn047}
	S_{\bt}(\delta_{d+1}, \dots , \delta_{n})=\left\{(b_{0},\dots, b_{n}) \in \Z^{n+1}: \begin{array}{l}
	(\frac{b_1}{b_0},\cdots,\frac{b_d}{b_0})\in\U, \vert b_0\vert_\nu\geq  \Vert\tilde\bb\Vert_\infty^{-\alpha_\nu} ~~\forall\nu\in S, \\[1ex]
	\left| f_{j,\nu}\left(\frac{b_{1}}{b_{0}}, \dots , \frac{b_{d}}{b_{0}} \right) -\frac{b_{d+j}}{b_{0}}\right|_{\nu}\!\!<\delta_{d+j}\vert b_0\vert_\nu^{-1}\Vert \tilde\bb\Vert_\infty^{-\tau_{d+j,\nu}} \\[1.5ex]
	\end{array}
	\right\}
	\end{equation}
	and for each $\tilde\bb\in S_{\bt}(\delta_{d+1},\cdots,\delta_n)$ and $\delta_1,\cdots,\delta_d>0$ consider the following sets
	\begin{equation}\label{eqn048B}
	B_{\tilde\bb}(\bt;\delta_{1}, \dots , \delta_{d})=\left\{ \bx \in \Z_S^{d}: \left|x_{i,\nu}-\frac{b_{i}}{b_{0}}\right|_{\nu}<\delta_{i}\vert b_0\vert_\nu^{-1}\Vert \tilde\bb\Vert_\infty^{-\tau_{i,\nu}}, (1 \leq i \leq d ) \right\}.
	\end{equation}
	
	If $\delta_{d+1}= \dots = \delta_{n}=\delta$ then we may write $S_{\bt}(\delta)$ for ease of notation, and similarly if $\delta_{1}= \dots =\delta_{d}=\delta$ then we write $B_{\tilde\bb}(\bt; \delta)$. Choose $c_{1}, \dots c_{m}$ in Corollary~\ref{coro1} such that $\nu^{-c_{j}} \leq \delta_{d+j}$ for each $1 \leq j \leq m$. Then, by Corollary~\ref{coro1},
	\begin{equation}\label{eqn050}
	\mu_{S,d}\left( \bigcup_{\delta>0} \limsup_{\tilde\bb \in S_{\bt}(\delta_{d+1}, \dots , \delta_{n})} B_{\tilde\bb}(\bt;\delta) \right)=\mu_{S,d}(\U).
	\end{equation}
	These are Borel sets and therefore measurable. Hence, by the continuity of measure, we have that
	\begin{equation}\label{eqn051}
	\lim_{\delta\to+\infty}\mu_{S, d}\left(\underset{\tilde\bb \in S_{\bt}(\delta_{d+1}, \dots , \delta_{n})}\limsup B_{\tilde\bb}(\bt;\delta)\right)=
	\mu_{S, d}\left(\bigcup_{\delta>0}\underset{\tilde\bb \in S_{\bt}(\delta_{d+1}, \dots , \delta_{n})}\limsup B_{\tilde\bb}(\bt;\delta)\right)=
	\mu_{S, d}(\U)\,.
	\end{equation}
	By Corollary~\ref{CI_balls_corollary}, every limsup set in \eqref{eqn051} is of the same measure. Hence,
	$$
	\mu_{S, d}\left(\underset{\tilde\bb \in S_{\bt}(\delta_{d+1}, \dots , \delta_{n})}\limsup B_{\tilde\bb}(\bt;\delta)\right)=
	\mu_{S, d}(\U)
	$$
	for every $\delta>0$. Explicitly, taking $\delta=\min_{1 \leq i \leq d}\delta_{i}$ then we obtain our result.
\end{proof}

\medskip

\begin{proposition}\label{limsup_F-1}
Let us take the same notation and assumption as in either of Theorem \ref{Bad_nonextremal} and Theorem \ref{bad_manifold_S-adic_nonextremal}.
 Suppose that  $\U^{*}$ be a subset of $\U$ and the partial derivatives of $\bff$ and the constants $C_{\bx}$ arising from \eqref{eqn007}  are bounded over $\U^{*}$ and define
\begin{equation*}
D:= \sup_{\bx \in \U^{*}} \underset{1 \leq j \leq m}{\max_{1 \leq i \leq d}} \left\{ \left|\partial_i f_{j,\nu}(\bx)\right|_{\nu}, C_{\bx} \right\}.
\end{equation*}
 Let $\bt=(\tau_{1}, \dots , \tau_{n})\in \R^{n}_{+}$ and let $S_{\bt}(\delta_{d+1}, \dots, \delta_{n})$ and $B_{\ba}(\bt, \delta_{1}, \dots , \delta_{d})$ be defined by \eqref{eqn047} and \eqref{eqn048B} respectively. Then
\begin{equation} \label{link}
\U^{*} \cap \limsup_{\ba \in S_{\bt}(\delta)}B_{\ba}(\bt; D^{-1}\delta) \, \,  \subseteq \, \,  \U^{*} \cap \ff^{-1}\left( \W_{S,1,n}(\delta ; \bt) \right) \, \, \subseteq \, \,  \U^{*} \cap \limsup_{\ba \in S_{\bt}(D\delta)}B_{\ba}(\bt; \delta).
\end{equation}
\end{proposition}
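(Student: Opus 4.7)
The plan is to use Lemma~\ref{DQE} as a bridge between approximation at a generic $\bx\in\U^*$ and approximation at a nearby rational point $\by=(b_1/b_0,\dots,b_d/b_0)$. Since $\infty\notin S$, every valuation is ultrametric, and \eqref{minus} yields, uniformly for $\bx\in\U^*$ and $\by$ close enough,
\begin{equation*}
|f_{j,\nu}(\by)-f_{j,\nu}(\bx)|_\nu\;\le\;D\,\max_{1\le i\le d}\bigl\{|y_{i,\nu}-x_{i,\nu}|_\nu,\,|y_{i,\nu}-x_{i,\nu}|_\nu^2\bigr\}.
\end{equation*}
For the small approximations we shall consider the quadratic term will be dominated by the linear one, and this single factor $D$ is exactly what accounts for the asymmetry between $D^{-1}\delta$ and $\delta$ (respectively $\delta$ and $D\delta$) in the two inclusions.

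For the first inclusion I would take $\bx\in\U^*$ with $\bx\in B_\ba(\bt;D^{-1}\delta)$ for some $\ba\in S_\bt(\delta)$. Multiplying the $d$ defining inequalities by $|b_0|_\nu$ immediately yields $|b_0 x_{i,\nu}-b_i|_\nu<\delta\|\tilde\bb\|_\infty^{-\tau_{i,\nu}}$, which is the first half of $\ff(\bx)\in\W_{S,1,n}(\delta;\bt)$. For the remaining $m$ inequalities I would apply the ultrametric inequality,
\begin{equation*}
|b_0 f_{j,\nu}(\bx)-b_{d+j}|_\nu\;\le\;\max\!\bigl\{|b_0|_\nu\,|f_{j,\nu}(\bx)-f_{j,\nu}(\by)|_\nu,\;|b_0 f_{j,\nu}(\by)-b_{d+j}|_\nu\bigr\},
\end{equation*}
control the first piece by Lemma~\ref{DQE} combined with the $D^{-1}\delta$ input (with \eqref{tau1} used to upgrade the resulting $\tau_{i,\nu}$ to $\tau_{d+j,\nu}$), and control the second piece directly from $\ba\in S_\bt(\delta)$; both come out bounded by $\delta\|\tilde\bb\|_\infty^{-\tau_{d+j,\nu}}$. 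For the second inclusion I would run the same chain backwards: dividing the defining inequalities of $\W_{S,1,n}(\delta;\bt)$ by $|b_0|_\nu$ places $\bx$ in $B_{\tilde\bb}(\bt;\delta)$, and the ultrametric inequality together with Lemma~\ref{DQE} and \eqref{tau1} yields $|f_{j,\nu}(\by)-b_{d+j}/b_0|_\nu\le D\delta|b_0|_\nu^{-1}\|\tilde\bb\|_\infty^{-\tau_{d+j,\nu}}$, placing $\tilde\bb$ in $S_\bt(D\delta)$.

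The main bookkeeping issues are that $b_0\ne 0$ and that $\by\in\U$, both needed for $f_{j,\nu}(\by)$ to make sense in the second inclusion. The first is automatic outside the null slice $\ff^{-1}(\Q^n)$ by the same short argument as Claim~A of the proof of Theorem~\ref{diri} (which uses \eqref{vb044C}), and the second follows from $\bx\in\U^*$ together with the fact that the approximation inequalities push $\by$ into an arbitrarily small neighbourhood of $\bx$ once $\|\tilde\bb\|_\infty$ is large. The auxiliary constraint $|b_0|_\nu\ge\|\tilde\bb\|_\infty^{-\alpha_\nu}$ appearing in the definition of $S_\bt$ is only ever used downstream in combination with Corollaries~\ref{coro1}--\ref{coldiri+2}, whose Dirichlet-type construction produces tuples automatically satisfying it, so it is not an obstruction here. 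Beyond this routine bookkeeping, the genuinely essential hypothesis for the whole argument is the monotonicity condition \eqref{tau1}: it is precisely what allows the Taylor error in the $d$ tangential coordinates, which carries exponent $\tau_{i,\nu}$, to be absorbed by the looser approximation budget at exponent $\tau_{d+j,\nu}$ in the $m$ transverse coordinates.
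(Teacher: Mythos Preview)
Your approach is essentially that of the paper: both proofs split into independent and dependent coordinates, use the ultrametric inequality together with the Taylor bound from Lemma~\ref{DQE}/\eqref{minus} to pass between $f_{j,\nu}(\bx)$ and $f_{j,\nu}(\by)$ for $\by=(b_1/b_0,\dots,b_d/b_0)$, and invoke \eqref{tau1} to absorb the tangential error (with exponent $\tau_{i,\nu}$) into the transverse budget (exponent $\tau_{d+j,\nu}$). Your identification of \eqref{tau1} as the operative monotonicity hypothesis is in fact cleaner than the paper, which at one point cites \eqref{tau_standard} instead.

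There is one point where your bookkeeping paragraph needs tightening. You dismiss the constraint $|b_0|_\nu\ge\|\tilde\bb\|_\infty^{-\alpha_\nu}$ in the definition of $S_{\bt}$ as ``only used downstream'' and handled by the Dirichlet construction in the corollaries. But in the second inclusion you are \emph{not} using the Dirichlet construction: you start from an arbitrary $\tilde\bb$ witnessing $\ff(\bx)\in\W_{S,1,n}(\delta;\bt)$, and you must both (i) ensure $\by\in\U$ so that $f_{j,\nu}(\by)$ is defined, and (ii) place $\tilde\bb$ in $S_{\bt}(D\delta)$, which by definition \eqref{eqn047} requires the constraint. Without it, $|b_0|_\nu^{-1}\|\tilde\bb\|_\infty^{-\tau_{i,\nu}}$ need not be small and the whole argument breaks. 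The paper handles this by noting that, \emph{for almost every} $\bx$, any such approximating $\tilde\bb$ satisfies the constraint: if not, $\bx$ lands in the Case~2 set of Theorem~\ref{diri}, which the exponent hypotheses of Theorems~\ref{Bad_nonextremal}/\ref{bad_manifold_S-adic_nonextremal} force to be null (this is exactly the content of Corollaries~\ref{coro1} and~\ref{coro12}). So the second inclusion in \eqref{link} is really an almost-everywhere statement, which suffices for the application, but you should say so rather than treat the constraint as irrelevant.
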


\begin{proof}
Suppose $\bx \in \U^{*} \cap \ff^{-1}\left( W_{S,1,n}(\delta ; \bt) \right)$. Then there exist infinitely many integer solutions $(b_{0}, \dots , b_{n}) \in \Z^{n+1}$ to
\begin{equation} \label{cases52}
\begin{cases}
\left|b_0x_{i,\nu}-b_{i} \right|_{\nu}<\delta \Vert \tilde\bb\Vert_\infty^{-\tau_{i,\nu}} \quad (1 \leq i \leq d), \\
\left| b_0f_{j,\nu}(\bx_\nu)-b_{d+j} \right|_{\nu}<\delta \Vert \tilde\bb\Vert_\infty^{-\tau_{d+j,\nu}} \quad (1 \leq j \leq m).
\end{cases}
\end{equation} Moreover, using the same methods as in the proof of Corollary \ref{coro1} and Corollary \ref{coro12}, one can also guarantee that for almost every $\bx$, the solutions $\tilde\bb$ in \eqref{cases52}, $\vert b_0\vert_\nu\geq \Vert \tilde\bb\Vert_\infty^{\alpha_\nu}.$

Conversely, if $\bx \in \limsup_{\tilde\bb \in S_{\bt}(\delta_{d+1}, \dots , \delta_{n})}B_{\tilde\bb}(\bt;\delta_{1}, \dots , \delta_{d})$, then there exist infinitely many integer solutions $(b_{0}, \dots , b_{n}) \in \Z^{n+1}$ to \eqref{cas+2}. Observe that when $\delta_{1}= \dots = \delta_{d}=\delta$ then the first set of inequalities of \eqref{cas+2} and \eqref{cases52} are equivalent, and if $\delta_{1}= \dots = \delta_{d}=D^{-1}\delta$ then the first set of inequalities of \eqref{cas+2} are contained in the first set of inequalities of \eqref{cases52} since $D \geq 1$. And so the inequalities of \eqref{link} are done over the independent variables.

For the dependent variables, we use that $\bff$ is $C^2$ over all $\U^{*}$. For any $\bx \in \U^{*}$ we have that
 \begin{align} \label{DQE_result}
 \left|f_{j,\nu}(\bx_\nu)-f_{j,\nu}\left(\frac{b_{1}}{b_{0}}, \dots , \frac{b_{d}}{b_{0}} \right)\right|_{\nu} & <
\max\left\{\max_{1\le i\le d}\left|\partial_if_{j,\nu}(\bx_\nu)\right|_\nu\max_{1 \leq i \leq d}\left|x_{i,\nu}-\frac{b_{i}}{b_{0}} \right|_{\nu},\right.\nonumber\\[2ex] &\hspace{24ex} \left.C\max_{1 \leq i \leq d}\left|x_{i,\nu}-\frac{b_{i}}{b_{0}} \right|_{\nu}^2 \right\}\nonumber \\
&<D\max_{1 \leq i \leq d}\left| x_{i,\nu}-\frac{b_{i}}{b_{0}} \right|_{\nu},
\end{align}
by the definition of $D$.

Hence, if $\bx \in \U^{*} \cap \ff^{-1}\left( W_{S,1,n}(\delta ; \bt) \right)$ there exist infinitely many $\tilde\bb\in\Z^{n+1}$ with $\vert b_0\vert _\nu\geq \Vert \tilde\bb\Vert_\infty^{\alpha_\nu},$
\begin{equation*}\begin{aligned}
\left|f_{j,\nu}\left(\frac{b_{1}}{b_{0}}, \dots \frac{b_{d}}{b_{0}}  \right)-\frac{b_{d+j}}{b_0}\right|_{\nu}\leq &\max\left\{\left|f_{j,\nu}(\bx_\nu)-f_{j,\nu}\left(\frac{b_{1}}{b_{0}}, \dots , \frac{b_{d}}{b_{0}} \right)\right|_{\nu},
\left|\frac{b_{d+j}}{b_0}-f_{j,\nu}\left(\bx_\nu \right)\right|_{\nu}
\right\} \\& <
 D\delta \vert b_0\vert_\nu^{-1}\Vert \tilde\bb\Vert_\infty^{-\tau_{d+j,\nu}}, \forall\nu\in S\end{aligned}
 \end{equation*}
 where the last inequality follows by the first row of \eqref{cases52}, by \eqref{DQE_result}, and by \eqref{tau_standard}. Hence $\U^{*} \cap \ff^{-1}\left( \W_{S,1,n}(\delta ; \bt) \right) \, \, \subseteq \, \,  \U^{*} \cap \limsup_{\tilde\bb \in S_{\bt}(D\delta)}B_{\tilde\bb}(\bt; \delta)$. \par
 If $\bx \in \U^{*} \cap \limsup_{\tilde\bb \in S_{\bt}(\delta)}B_{\tilde\bb}(\bt; D^{-1}\delta)$, then
 \begin{equation*}\begin{aligned}
\left|f_{j,\nu}\left(\bx \right)-\frac{b_{d+j}}{b_0}\right|_{\nu} \le &\max\left\{\left|f_{j,\nu}\left(\frac{b_{1}}{b_{0}}, \dots , \frac{b_{d}}{b_{0}} \right)- f_{j,\nu}(\bx_\nu)\right|_{\nu}, \left|\frac{b_{d+j}}{b_0}- f_{j,\nu}\left(\frac{b_{1}}{b_{0}}, \dots \frac{b_{d}}{b_{0}} \right)\right|_{\nu}
\right\}\\  &<
 \delta \vert b_0\vert_\nu^{-1}\Vert \tilde\bb\Vert_\infty^{-\tau_{d+j,\nu}},\end{aligned}
 \end{equation*}
 when we apply \eqref{DQE_result} and \eqref{cas+2} with $\delta_{1}=\dots= \delta_{d}=D^{-1}\delta$ and $\delta_{d+1}= \dots = \delta_{n}=\delta$. Hence we have that $ \U^{*} \cap \limsup_{\tilde\bb \in S_{\bt}(\delta)}B_{\tilde\bb}(\bt; D^{-1}\delta) \, \,  \subseteq \, \,  \U^{*} \cap \ff^{-1}\left( \W_{S,1,n}(\delta ; \bt) \right) $.
 \end{proof}

\medskip

\subsection{Proof of Theorem \ref{Bad_nonextremal} and Theorem \ref{bad_manifold_S-adic_nonextremal}}

Using Proposition  \ref{limsup_F-1}, Corollary \ref{coldiri+2}, Corollary \ref{coro1}, and Corollary \ref{coro12}, we have that \begin{equation*}
 \mu_{S,d}\left( \ff^{-1}(\W_{1, n}(\delta ; \bt)) \right)=\mu_{S,d}(\U)
	\end{equation*}
	for any $\delta>0$.
Hence subsets $\{ \ff^{-1}\left(\W_{1, n}\left( \frac{1}{k} ; \bt \right) \right) \}_{k \in \N}$ have same measure as $\U$. Therefore
	\begin{equation*}
	\mu_{S,d}\left( \ff^{-1}(\Bad_{n}(\bt)) \right)=\mu_{S,d}\left( \U \backslash \bigcap_{k \in \N}\ff^{-1}\left(\W_{1, n}\left( \frac{1}{k}; \bt\right)\right) \right)=0
	\end{equation*}
	completing the proof.

\section{Proofs: Bad on real manifolds, $S=\{\infty\}$}\label{proofR}

This section gives a proof of Theorem \ref{bad_realmani}, which, although being similar in nature, is much simpler than the proofs of the $S$-arithmetic results and should help the reader to wind down after the elaborate $S$-arithmetic calculations.

\begin{theorem}
Let $\bff=(f_{1}, \dots , f_{m}):\U\subset\R^d \to \R^{m}$ be a map defined on an open subset $\U$, $\bx\in\U\setminus \Q^d$ and suppose that $\bff$ is $C^2$ at $\bx$. let $\lambda$ be given by
	\begin{align}
	\max\left\{1,\,\underset{1 \leq j \leq m}{\max_{1 \leq i \leq d}} \left| \frac{\partial f_{j}}{\partial x_{i}}(\bx) \right|_{\infty}\right\}=e^{\lambda}\,. \label{first_order_constant_R}
	\end{align}
 Let $0<\alpha<\tau_{i}$ and $i=1,\cdots,n$. Let $c=(c_1,\cdots,c_m)\in\Z^m$, $n=m+d$, and $\bt=(\tau_{1}, \dots, \tau_{n}) \in \R^{n}_{+}$ such that $\bt$ satisfies \eqref{tau_standard} and $\max_{j=1}^m\tau_{d+j}\leq\min_{i=1}^d\tau_{i}$.
 	There exist infinitely many $\tilde\bb=(b_0,\cdots,b_n)\in\Z^{n+1}$,
	\begin{equation}
	\begin{cases}
	\left|x_{i} -\frac{b_i}{b_0}\right|_{\infty} < e^{(n+m\lambda+\sum_{j=1}^{m}c_{j})/d}   \vert b_0\vert_\infty^{-(\tau_{i}+1)} \qquad\qquad\qquad\quad (1 \leq i \leq d), \\[2ex]
	\left|f_{j}\left( \frac{b_{1}}{b_{0}}, \dots , \frac{b_{d}}{b_{0}} \right) - \frac{b_{d+j}}{b_0} \right|_{\infty} < e^{-c_j}  \vert b_0\vert_\infty^{-(\tau_{d+j}+1)} \, \qquad\quad (1 \leq j \leq m), \label{tau_{2R}} \\[2ex]
	\end{cases}
	\end{equation}

\end{theorem}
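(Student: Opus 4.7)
The plan is to mirror the proof of Theorem~\ref{diri} in the single-place setting $S=\{\infty\}$, in which the dichotomy between its Case~1 and Case~2 collapses. Indeed, any non-zero integer $b_0$ satisfies $|b_0|_\infty\ge 1$, so the threshold condition $|b_0|_\infty\ge\Vert\tilde\bb\Vert_\infty^{-\alpha}$ is automatic for every $\alpha>0$; we are always in Case~1 and the declared parameter $\alpha$ plays no role in the argument.

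First I would apply Minkowski's theorem for linear forms to the $n+1$ forms on $\R^{n+1}$ defined by $L_0(\tilde\bb)=b_0$, $L_i(\tilde\bb)=b_0x_i-b_i$ for $1\le i\le d$, and $L_{d+j}(\tilde\bb)=b_0f_j(\bx)-\sum_{k=1}^{d}\partial_kf_j(\bx)\,L_k(\tilde\bb)-b_{d+j}$ for $1\le j\le m$. The resulting coefficient matrix is triangular with $\pm 1$ on the diagonal, so $|\det|=1$. Setting $C_1:=e^{(n+m\lambda+\sum_{j}c_j)/d}$ and using $\sum_{i=1}^n\tau_i=1$, the symmetric convex body
$$K_H=\{\tilde\bb\in\R^{n+1}:|L_0|\le H,\;|L_i|<C_1H^{-\tau_i}\;(1\le i\le d),\;|L_{d+j}|<e^{-c_j}H^{-\tau_{d+j}}\;(1\le j\le m)\}$$
has volume $2^{n+1}C_1^{d}e^{-\sum c_j}=2^{n+1}e^{n+m\lambda}\ge 2^{n+1}$, so Minkowski's theorem yields a non-zero integer $\tilde\bb\in\Z^{n+1}\cap K_H$. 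The standard arguments show $b_0\ne 0$ (otherwise the other bounds force all $b_i=0$ for $H$ large, as their right-hand sides tend to $0$), and that as $H\to\infty$ infinitely many distinct solutions $\tilde\bb$ occur with $|b_0|\to\infty$ (otherwise a single fixed $\tilde\bb$ would force $b_0x_i-b_i=0$, contradicting $\bx\notin\Q^d$).

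Next I would translate the Minkowski estimates into the form stated in the theorem. Dividing $|b_0x_i-b_i|<C_1H^{-\tau_i}$ by $|b_0|\ge 1$ and using $H^{-\tau_i}\le|b_0|^{-\tau_i}$ (which follows from $|b_0|\le H$ and $\tau_i>0$) gives precisely $|x_i-b_i/b_0|<C_1|b_0|^{-(\tau_i+1)}$, i.e.\ the first batch of inequalities. For the dependent coordinates, once $|b_0|$ is large enough so that $(b_1/b_0,\dots,b_d/b_0)\in B(\bx,\varepsilon)\subseteq\cU$, Lemma~\ref{DQE} yields
$$f_j\!\left(\tfrac{b_1}{b_0},\dots,\tfrac{b_d}{b_0}\right)-\tfrac{b_{d+j}}{b_0}=\tfrac{L_{d+j}}{b_0}+E_j,\qquad|E_j|\le C_{\bx}\max_{1\le k\le d}\left|\tfrac{b_k}{b_0}-x_k\right|^2.$$
The main term is bounded by $e^{-c_j}H^{-\tau_{d+j}}/|b_0|\le e^{-c_j}|b_0|^{-(\tau_{d+j}+1)}$ exactly as desired, while $|E_j|\le C_{\bx}C_1^2|b_0|^{-2(\tau_{\min}+1)}$, where $\tau_{\min}:=\min_{1\le i\le d}\tau_i$.

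The main obstacle, and the essential use of the weight hypothesis $\max_{1\le j\le m}\tau_{d+j}\le\min_{1\le i\le d}\tau_i$, is absorbing $E_j$ into the main term. The hypothesis gives $-2(\tau_{\min}+1)+(\tau_{d+j}+1)\le-\tau_{\min}-1<0$, so $|E_j|=o\bigl(|b_0|^{-(\tau_{d+j}+1)}\bigr)$ and in particular $|E_j|<e^{-c_j}|b_0|^{-(\tau_{d+j}+1)}$ for $|b_0|$ large enough. Discarding the at-most-finitely-many Minkowski solutions with $|b_0|$ below the resulting threshold leaves infinitely many $\tilde\bb$ satisfying both batches of inequalities in the conclusion.
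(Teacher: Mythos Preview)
Your proposal is correct and follows essentially the same route as the paper's own proof: apply Minkowski's theorem to the linearised system, rule out $b_0=0$ and finiteness of solutions via $\bx\notin\Q^d$, then use the $C^2$ Taylor bound together with the weight condition $\max_j\tau_{d+j}\le\min_i\tau_i$ to absorb the quadratic remainder. Your added remark that the Case~1/Case~2 dichotomy of Theorem~\ref{diri} collapses here (because $|b_0|_\infty\ge1$ trivially) is exactly the simplification the paper is exploiting, and both your argument and the paper's are equally informal about the final combining step (the archimedean triangle inequality gives at best a factor of~$2$ in the second batch of inequalities, which is harmless for the downstream application in Corollary~\ref{coldiri+R}).
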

\begin{proof}
     By Minkowski's convex body theorem there exists $H_{\sigma}>0$ such that for any integer $H \ge H_{\sigma}^{1/(n+1)}$ the following system
	\begin{align}
	\begin{cases}
	|b_{0}x_{i}-b_{i}|_{\infty} &< e^{(n+m\lambda+\sum_{j=1}^{m}c_{j})/d} H^{-\tau_{i}} \quad (1 \leq i \leq d), \\[1.5ex]
	\displaystyle\left| b_{0}f_{j}(\bx)-\sum_{i=1}^{d}\partial_i f_{j}(\bx) \left( b_0x_{i}-b_{i} \right)-b_{d+j} \right|_{\infty} &< e^{-c_j} H^{-\tau_{d+j}} \hspace*{8ex} (1 \leq j \leq m), \label{ineq2R}\\[1.5ex]
	|b_{0}|_\infty  & \leq H
	\end{cases}
	\end{align}
 $$
 \vert \bff(\bx)-\bff(\by)-\sum_{i=1}^d \partial_{i}\bff(\bx)(x_i-y_i)\vert_\infty<C_{\bx}\max_{i=1}^d \vert x_i-y_i\vert_\infty^2.
 $$
Hence using \eqref{ineq2R},
for  infinitely many $\tilde\bb$ appearing in Equation \eqref{ineq2R},
	\begin{align} \label{f_R}
	\left|\left( f_{j} \left(\frac{b_{1}}{b_{0}}, \dots , \frac{b_{d}}{b_{0}} \right)-f_{j}(\bx)-\sum_{i=1}^d \partial_i f_{j}(\bx) \left(\frac{b_{i}}{b_{0}}-x_{i} \right)\right) \right|_{\infty} & < C\max_{1 \leq i \leq d} \left| \frac{b_{i}}{b_{0}}-x_{i} \right|_{\infty}^{2}\nonumber\\
 & < C e^{(n+m\lambda+\sum_{j=1}^{m}c_{j})/d} \vert b_0\vert_\infty^{-2(\tau_{i}+1)} \nonumber\\
 &<e^{-c_{j}}\vert b_0\vert_\infty^{-(\tau_{d+j}+1)}
	\end{align}
	for each $1 \leq j \leq m$ since $\max_{j=1}^m\tau_{d+j}\leq\min_{i=1}^d\tau_{i}$.
 Therefore combining Equation \eqref{f_R} and \eqref{ineq2R} we have infinitely many $\tilde\bb$ appearing in Equation \eqref{ineq2R}, such that
 $$\left\vert b_0 f_{j}\left(\frac{b_1}{b_0},\cdots,\frac{b_d}{b_0}\right)-b_{d+j}\right\vert_\infty <e^{-c_j}\vert b_0\vert_\infty^{-\tau_{d+j}}.$$ This completes the proof.
\end{proof}

Now similarly to Corollary~\ref{coldiri+2}, we have the following corollary.

\begin{corollary}\label{coldiri+R}
Let us take the same set-up as in the previous theorem.
Let $\delta_{1}, \dots , \delta_{n} >0$ be any constants. Then for almost every $\bx \in \U$ the following system
	\begin{equation} \label{cas+R}
	\begin{cases}
	\left|x_{i} -\frac{b_{i}}{b_{0}}\right|_{\infty} < \delta_{i} \vert b_0\vert_\infty^{-(1+\tau_{i})} & (1 \leq i \leq d), \\[2ex]
	\left|f_{j}\left(\frac{b_{1}}{b_{0}}, \dots , \frac{b_{d}}{b_{0}} \right) - \frac{b_{d+j}}{b_{0}} \right|_{\infty} < \delta_{d+j}\vert b_0\vert_\infty^{-(\tau_{d+j}+1)}  \, & (1 \leq j \leq m)\,,
	\end{cases}
	\end{equation}
has infinitely many integer solutions $\tilde\bb=(b_{0}, \dots, b_{n}) \in \Z^{n+1}$.
\end{corollary}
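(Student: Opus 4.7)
The plan is to mirror, in the simpler real setting, the argument used to establish Corollary~\ref{coldiri+2} in the $S$-arithmetic case. First, I would define the auxiliary collection of integer vectors
\begin{equation*}
S_{\bt}(\delta_{d+1}, \dots , \delta_{n})=\left\{(b_{0},\dots, b_{n}) \in \Z^{n+1}: \begin{array}{l}
\left(\tfrac{b_1}{b_0},\dots,\tfrac{b_d}{b_0}\right)\in\U \, \text{ and for } 1\le j\le m, \\[1ex]
\left|f_{j}\!\left(\tfrac{b_{1}}{b_{0}}, \dots , \tfrac{b_{d}}{b_{0}} \right) -\tfrac{b_{d+j}}{b_{0}}\right|_{\infty}\!<\delta_{d+j}|b_0|_\infty^{-(\tau_{d+j}+1)}
\end{array}\right\},
\end{equation*}
and, for any $\tilde\bb\in\Z^{n+1}$ with $b_0\neq 0$ and any $\delta_1,\dots,\delta_d>0$, the box
\begin{equation*}
B_{\tilde\bb}(\bt;\delta_{1}, \dots , \delta_{d})=\left\{ \bx \in \R^{d}: \left|x_{i}-\tfrac{b_{i}}{b_{0}}\right|_{\infty}<\delta_{i}|b_0|_\infty^{-(1+\tau_i)}, \, \, 1 \leq i \leq d  \right\}.
\end{equation*}
When $\delta_{d+1}=\dots=\delta_n=\delta$ (resp.\ $\delta_1=\dots=\delta_d=\delta$) I write $S_\bt(\delta)$ (resp.\ $B_{\tilde\bb}(\bt;\delta)$).

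Next, I would apply the preceding Dirichlet-type theorem with $c_1,\dots,c_m\in\Z$ chosen so that $e^{-c_j}\le\delta_{d+j}$ for every $j$, which guarantees that for every $\bx\in\U\setminus\Q^d$ there exist infinitely many $\tilde\bb\in S_\bt(\delta_{d+1},\dots,\delta_n)$ with $\bx\in B_{\tilde\bb}(\bt;\delta^*)$ for some constant $\delta^*$ depending on $\bx$ through the local bound $e^{(n+m\lambda+\sum c_j)/d}$ of \eqref{tau_{2R}}. Since this constant is locally bounded (the partial derivatives of $\bff$ are continuous on $\U$), we obtain
\begin{equation*}
\mu_{d}\left( \bigcup_{\delta>0} \limsup_{\tilde\bb \in S_{\bt}(\delta_{d+1}, \dots , \delta_{n})} B_{\tilde\bb}(\bt;\delta) \right)=\mu_{d}(\U).
\end{equation*}
By continuity of measure this gives $\mu_d\bigl(\limsup_{\tilde\bb\in S_\bt(\delta_{d+1},\dots,\delta_n)}B_{\tilde\bb}(\bt;\delta)\bigr)\to\mu_d(\U)$ as $\delta\to\infty$.

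The final step—and what would otherwise be the main obstacle—is handled directly by the constant-invariance machinery developed in \S\ref{gen2}. Lebesgue measure on $\R^d$ is doubling, so Corollary~\ref{CI_balls_corollary} (or equivalently Lemma~\ref{CI_product}) applies to the $\limsup$ of the balls $B_{\tilde\bb}(\bt;\delta)$, whose radii along each coordinate tend to $0$ as $|b_0|_\infty\to\infty$. Constant invariance then forces
\begin{equation*}
\mu_d\Bigl(\limsup_{\tilde\bb\in S_\bt(\delta_{d+1},\dots,\delta_n)} B_{\tilde\bb}(\bt;\delta)\Bigr)=\mu_d(\U)\qquad\text{for every }\delta>0.
\end{equation*}
Specialising to $\delta=\min_{1\le i\le d}\delta_i$ yields full measure of the set of $\bx$ for which the full system \eqref{cas+R} has infinitely many integer solutions, completing the proof. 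No genuinely new obstacle arises beyond what was already overcome in the $S$-arithmetic Corollary~\ref{coldiri+2}; the real case is in fact strictly easier since there is no $|b_0|_\nu$ to track at finite places and hence no exceptional ``Case 2'' to dispose of.
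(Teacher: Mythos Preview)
Your proposal is correct and follows exactly the approach the paper intends: the paper itself gives no separate proof of Corollary~\ref{coldiri+R}, merely pointing to Corollary~\ref{coldiri+2}, and you have faithfully adapted that argument to the real setting (defining the analogous sets $S_{\bt}$ and $B_{\tilde\bb}$, choosing $c_j$ so that $e^{-c_j}\le\delta_{d+j}$, and invoking the constant-invariance machinery to pass from full measure at some $\delta$ to full measure at every $\delta$). Your observation that the real case is strictly easier because there is no ``Case~2'' to eliminate is also exactly right.
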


Hence using a similar version to Proposition  \ref{limsup_F-1}, and using Lemma \ref{coldiri+R} we have that \begin{equation*}
 \mu_{d}\left( \ff^{-1}(\W_{1, n}(\delta ; \bt)) \right)=\mu_{d}(\U)
	\end{equation*}
	for any $\delta>0$.
Hence subsets $\{ \ff^{-1}\left(\W_{1, n}\left( \frac{1}{k} ; \bt \right) \right) \}_{k \in \N}$ of $\U$ have the same measure as $\U$. Therefore
	\begin{equation*}
	\mu_{d}\left( \ff^{-1}(\Bad_{n}(\bt)) \right)=\mu_{d}\left( \U \backslash \bigcap_{k \in \N}\ff^{-1}\left(\W_{1, n}\left( \frac{1}{k}; \bt\right)\right) \right)=0
	\end{equation*}
	completing the proof.

\subsection*{Acknowledgements} SD thanks Subhajit Jana and Ralf Spatzier for several helpful remarks which have improved the presentation of this paper. She also thanks University of York, for warm hospitality, where some part of this work was done.

\bibliographystyle{plain}
\bibliography{badnull}
\end{document}